\long\def\comment#1\endcomment{}
\DeclareMathOperator{\spam}{span}
\DeclareMathOperator{\Ra}{Re}
\DeclareMathOperator{\rank}{rank}
\DeclareMathOperator{\Tr}{Tr}
\DeclareMathOperator{\Hom}{Hom}
\DeclareMathOperator{\Sp}{Sp}
\DeclareMathOperator{\SL}{SL}
\DeclareMathOperator{\PSL}{PSL}
\DeclareMathOperator{\GL}{GL}
\newtheorem{thm}{Theorem}[section]
\newtheorem{cor}[thm]{Corollary}
\newtheorem{lem}[thm]{Lemma}
\newtheorem{prop}[thm]{Proposition}
\newtheorem{defin}[thm]{Definition}
\theoremstyle{remark}
\numberwithin{equation}{section}
\title[Flexible Hilbert-Schmidt Stability versus Hyperlinearity]{Flexible Hilbert-Schmidt Stability versus Hyperlinearity for Property (T) Groups}
\author{Alon Dogon}
\address{Department of Mathematics, Weizmann Institute of Science, Rehovot, Israel}
\email{alon.dogon@mail.huji.ac.il}
\thanks{Funding: This work was supported by the European Research Council (ERC) 
under the European Unions Horizon 2020 research and innovation program (Grant No. 882751).}
\thanks{\textbf{2020 Mathematics Subject Classification:} 20P05, 20J06, 20E26, 22D25, 22D10, 22D55.}
\thanks{\textbf{keywords:} Kazhdan's property (T), hyperlinear groups, group stability, random groups, lattices in Lie groups.}
\begin{document}

\maketitle
\begin{abstract}
We prove a statement concerning hyperlinearity for central extensions of property (T) groups in the presence of flexible HS-stability, and more generally, weak ucp-stability.
Notably, this result is applied to show that if $\Sp_{2g} (\mathbb Z)$ is flexibly HS-stable, then there exists a non-hyperlinear group.
Further, the same phenomenon is shown to hold generically for random groups sampled in Gromov's density model,
as well as all infinitely presented property (T) groups. 
This gives new directions for the possible existence of a non-hyperlinear group. Our results yield Hilbert-Schmidt analogues for Bowen and Burton's work relating flexible P-stability of $\PSL_n(\mathbb Z)$ and the existence of non-sofic groups \cite{BB}.
\end{abstract}

\section{Introduction} \label{sec:intro}

Hyperlinearity is an approximation property for groups that was introduced by Radulescu \cite{Radulescu} following the seminal work of Connes \cite{Connes}.
Roughly speaking, a group $\Gamma$ is said to be \emph{hyperlinear} if for every finite window $F \subset \Gamma$, one can find unitary matrices that \emph{approximately} realise the partial multiplication given by $F$. 
Initial interest in hyperlinear groups stemmed from the fact that the group von Neumann algebra of a hyperlinear group is Connes embeddable (see \cite{Radulescu}), making them relevant for Connes' embedding problem. 
Even though Connes' embedding problem has recently been solved in the breakthrough work \textsc{MIP$^*$=RE}  \cite{MIP*=RE}, the notorious question of whether a non-hyperlinear group exists remains open.
The goal of the present paper is to provide new possible avenues in approaching this problem.

To define hyperlinear groups precisely, we first introduce some notation: 
the \emph{(normalised) Hilbert-Schmidt inner product} on $M_n(\mathbb C)$ is given by $\langle x, y \rangle_{\tau_n} = \tau_n (y^*x)$, where $\tau_n(x) = (1/n) \Tr(x)$ denotes the normalized trace on $M_n(\mathbb C)$ and $x, y \in M_n(\mathbb C)$. The induced norm $\Vert x \Vert_{2, \tau_n}$ is the \emph{(normalized) Hilbert-Schmidt norm}. Let $U(n) \subset M_n(\mathbb C)$
be the appropriate group of unitaries.

\begin{defin} \label{def:hyp_intro}
A group $G$ is said to be \textbf{hyperlinear}, or \textbf{Connes embeddable}, if for every $\epsilon>0$ and every finite subset $F \subset G$, there 
exists $d \in \mathbb N$ and a map $\varphi: F \to U(d)$ such that:
$$ \Vert \varphi(g)\varphi(h) - \varphi(gh) \Vert_{2,\tau_d} < \epsilon \; \text{ for all $g,h \in F$ such that $gh \in F$ }$$
and 
$$ \Vert \varphi(g) - 1 \Vert_{2,\tau_d} >  \sqrt{2} - \epsilon \; \text{ for all $g \in F \setminus \{1_\Gamma\}$}.$$

\end{defin}

Further interest in hyperlinear groups came from their relation to \emph{sofic groups}: Soficity is a finitary approximation property for groups that was introduced by Gromov \cite{Gromov} and further studied by Weiss \cite{Weiss},
and is known to hold for a large variety of groups (including all \emph{initially subamenable} groups, see \cite{Pestov}). 
Similarly to hyperlinearity, it is a major open problem to determine whether there exists a non-sofic group.
The work of Elek and Szab{\'o} \cite{ES2} showed sofic groups are hyperlinear, thus, an example of a non-hyperlinear group would also be an example of a non-sofic group. 
Both hyperlinear and sofic groups have found many applications in group theory, measurable and topological dynamics and operator algebras (see for example \cite{Bowen}, \cite{ES1}, \cite{ES2}, \cite{KT}, \cite{Thom2}, \cite{Jaikin}), making the above questions all the more interesting. We refer to \cite{Pestov}, \cite{CL} for excellent surveys on this topic.
   


Recently, Bowen and Burton \cite{BB} gave a surprising new approach towards finding non-sofic groups, by means of \emph{flexible P-stability}:
they showed that if $\PSL_n(\mathbb Z)$ is \emph{flexibly stable with respect to permutations}
for some $n \geq 5$, then there exists a non-sofic group. We refer to the original paper \cite{BB} for precise definitions and statement of this result 
(see also the discussion following Corollary \ref{cor:Sp}). 

We prove statements of similar nature in the setting of \emph{flexible 
Hilbert-Schmidt stability} (Definition \ref{def:flex_stab}) and hyperlinearity of general property (T) groups. 
Our techniques apply to a large family, including Gromov random groups, lattices in some Lie groups, and
all infinitely presented property (T) groups. As a consequence, \emph{if flexibly Hilbert-Schmidt stable
groups occur with positive probability in Gromov's model,
then there are non-hyperlinear groups} (see Corollary \ref{cor:rand}).
Let us recall the setting of stability in the Hilbert-Schmidt norm:

\begin{defin}[Becker and Lubotzky \cite{BL}, cf. Ioana \cite{Ioana2}]\label{def:flex_stab}
Given a countable group $\Gamma$, a sequence of maps $\varphi_n: \Gamma \to U(d_n)$ (for some $d_n \in \mathbb N$)
is called an \textbf{asymptotic homomorphism} if for all $g,h \in \Gamma$ one has:
$$\lim_n \Vert \varphi_n(g) \varphi_n(h) - \varphi_n(gh) \Vert_{2, \tau_{d_n}}=0 .$$ 
The group $\Gamma$ is said to be \textbf{flexibly HS-stable} if for every asymptotic homomorphism $\varphi_n: \Gamma \to U(d_n)$
there exists a sequence of true homomorphisms $\pi_n : \Gamma \to U(D_n)$ for some $D_n \geq d_n$ 
with $\lim_{n \to \infty} \frac{D_n}{d_n} = 1$ such that:
$$ \lim_n \Vert  \varphi_n(g) - P_n \pi_n(g) P_n\Vert_{2, \tau_{d_n}} = 0 \text{ for all $g \in \Gamma$,}$$
where $P_n: \mathbb C^{D_n} \to \mathbb C^{d_n}$ is the projection onto the first $d_n$-coordinates.
\end{defin}

That is, a group $\Gamma$ is flexibly HS-stable if every sequence of maps into unitary groups which \emph{asymptotically} 
resemble a homomorphism is close to a sequence of \emph{compressions} of \emph{genuine}
homomorphisms into unitary groups of slightly larger dimensions. 
Both flexible HS-stability, and a stricter notion called \emph{HS-stability}
\footnote{A group $\Gamma$ is HS-stable if it satisfies definition \ref{def:flex_stab} with $D_n = d_n$ for all $n$.}
have attracted considerable amount of attention in recent years (see \cite{Ioana2}, \cite{ISW},  \cite{delaSalle}, \cite{LV}, \cite{EckShul}, \cite{GS}, \cite{HS}, 
\cite{AP}).

Many amenable groups have been shown to be (flexibly) HS-stable, including all finitely generated 
virtually nilpotent groups, certain lamplighter groups and the Baumslag-Solitar groups $\text{BS}(1,n)$ \cite{LV}.
Non-amenable examples include certain one-relator groups (Theorem 8 in \cite{HS}) and virtually free groups \cite{GS}. 
One can obtain further examples by taking direct products with HS-stable amenable groups \cite{IS}.

On the other hand, any finitely generated hyperlinear group that is not residually finite cannot be flexibly HS-stable \cite{BL}.
Ioana, Spaas and Wiersma \cite{ISW} showed that $SL_2(\mathbb Z) \ltimes \mathbb Z^2$ is not flexibly HS-stable,
giving the first residually finite example. For groups of very different nature, Ioana \cite{Ioana2} proved that 
$\mathbb F_m \times \mathbb F_k$ is not flexibly HS-stable, $\mathbb F_m, \mathbb F_k$ being non-abelian free groups.

The following is our main result:

\begin{thm} \label{thm:main}
Let $\Gamma$ be a countable property (T) group and let $A$ be a countable torsion-free abelian group.
Assume there is a non-split central extension of the form 
\begin{center}
\begin{tikzcd}
1 \arrow[r] & A \arrow[r] & G \arrow[r] & \Gamma \arrow[r] & 1.
\end{tikzcd}
\end{center}

Further, assume either that $\Gamma$ is perfect, or that $G$ has property (T) and $A = \mathbb Z$. 
If $\Gamma$ is flexibly HS-stable, then $G$ is not hyperlinear.
\end{thm}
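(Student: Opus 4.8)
The plan is to argue by contradiction: assuming $G$ is hyperlinear, I would manufacture from a hyperlinear approximation of $G$ a genuine unitary representation of the quotient $\Gamma$, feed it to flexible HS-stability, and show that the resulting genuine finite-dimensional representations of $\Gamma$ force the central subgroup $A$ to be sent close to the identity. This contradicts the injectivity built into Definition \ref{def:hyp_intro}, because the non-split hypothesis will guarantee that the relevant central element is realised as a Haar unitary, hence far from $1$.

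First I would pass to the tracial ultraproduct. A hyperlinear approximation $\varphi_n : G \to U(d_n)$ gives, for a free ultrafilter $\mathcal U$, a homomorphism $\varphi : G \to U(\mathcal M)$ into the unitary group of the $\mathrm{II}_1$ factor $\mathcal M = \prod_{\mathcal U}(M_{d_n}(\mathbb C), \tau_{d_n})$, and after a standard amplification one may assume the induced trace is the canonical one, $\tau(\varphi(g)) = \delta_{g, 1_G}$. Writing $\mathcal N = \varphi(G)''$, centrality of $A$ shows $\varphi(A) \subset Z(\mathcal N)$, while faithfulness of $\tau$ on the torsion-free group $A$ forces $\tau(\varphi(a)^k) = \tau(\varphi(a^k)) = 0$ for every $a \neq 1$ and $k \neq 0$, so each $\varphi(a)$ is a Haar unitary; in particular $Z(\mathcal N)$ is diffuse and $\varphi$ does not factor through $\Gamma$. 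This diffuse central unitary is exactly the quantity I want to contradict. The essential structural gain in $\mathcal M$ is that the defect cocycle becomes \emph{central}: fixing a set-theoretic section $s : \Gamma \to G$ with cocycle $c : \Gamma \times \Gamma \to A$, the conjugation action $\gamma \mapsto \operatorname{Ad}\varphi(s(\gamma))$ on $L^2(\mathcal N)$ sees $\varphi(c(\gamma,\gamma'))$ act trivially, so it descends to an honest unitary representation $\rho : \Gamma \to U(L^2(\mathcal N))$.

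Next I would apply flexible HS-stability to $\rho$. Since $\rho$ is finitely approximable, sitting inside the ultraproduct of the genuine conjugation unitaries $\operatorname{Ad}\varphi_n(s(\gamma))$ on the spaces $(M_{d_n}(\mathbb C),\Vert\cdot\Vert_{2,\tau_{d_n}})$, it produces a genuine asymptotic homomorphism of $\Gamma$, to which flexible HS-stability associates genuine finite-dimensional representations $\pi_n : \Gamma \to U(D_n)$ with $D_n/d_n \to 1$ approximating it. Pulling these back along $G \twoheadrightarrow \Gamma$ gives representations of $G$ that kill $A$, and comparing their conjugation action with that of $\varphi_n$ shows that $\varphi_n(s(\gamma))$ and the lift of $\pi_n(\gamma)$ agree modulo the centre; thus $\varphi_n$ is, up to the flexible error, a central modification of a representation trivial on $A$. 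Here property (T) of $\Gamma$ is the rigidity input that makes this matching effective: it is what guarantees that $\rho$ and its finite models are genuinely close rather than merely weakly approximate, so the central comparison survives the passage between $\mathcal M$ and the $\pi_n$.

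Finally I would close the loop cohomologically, and this is where the two hypotheses are used and where I expect the main obstacle to lie. Fibrewise over the spectrum of the diffuse algebra $\varphi(A)''$, the central modification above is governed by the scalar cocycles $\chi \circ c$ for characters $\chi \in \widehat A$, with classes in $H^2(\Gamma, U(1))$. The non-split hypothesis says $[c] \neq 0$ in $H^2(\Gamma, A)$, and I would first upgrade this to non-vanishing of the real image $[c]_{\mathbb R} \in H^2(\Gamma, A \otimes \mathbb R)$: in the perfect case because $\operatorname{Hom}(\Gamma, D) = 0$ for every abelian $D$ renders $H^2(\Gamma, A) \hookrightarrow H^2(\Gamma, A\otimes\mathbb R)$ injective, and in the case $A = \mathbb Z$ because the hypothesis that $G$ has property (T) is known to be equivalent to non-triviality of the real Euler class of the extension. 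A countability argument then shows that for almost every $\chi$, with respect to the Haar spectral measure of the central Haar unitary, the class $[\chi\circ c]$ is non-trivial in $H^2(\Gamma, U(1))$, so the corresponding fibre of $\varphi$ is a projective representation of $\Gamma$ that does not lift to a linear one — incompatible with the previous paragraph, which realised $\varphi_n$ as a central modification of a genuinely $A$-trivial representation and hence can only carry cohomologically trivial cocycles. This forces $\varphi(A)$ to be scalar, contradicting its being a Haar unitary. The delicate step, and the main obstacle, is precisely the transfer in the third paragraph: making flexible HS-stability, a statement about asymptotic homomorphisms into finite matrices, bear on the disintegrated fibrewise projective representations inside $\mathcal M$ uniformly in $\chi$, which is exactly what property (T) is invoked to secure.
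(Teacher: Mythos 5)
Your overall scheme (hyperlinearity of $G$ plus stability of $\Gamma$ against non-vanishing of $[\chi\circ c]$ for suitable $\chi\in\widehat A$) is the right one, and your cohomological endgame is close to the paper's Section \ref{sec:coeff_change}: the long exact sequence argument in the perfect case parallels Proposition \ref{prop:coeff_change_perfect}, and for $A=\mathbb Z$ the correct statement is only the implication ``$G$ has (T) $\Rightarrow$ the real class of the extension is nonzero'' (Proposition \ref{prop:coeff_change_(T)}), not an equivalence. The analytic core, however, has a genuine gap --- in fact two.

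First, the maps $\gamma\mapsto \mathrm{Ad}\,\varphi_n(s(\gamma))$ acting on $(M_{d_n}(\mathbb C),\Vert\cdot\Vert_{2,\tau_{d_n}})$ are \emph{not} an asymptotic homomorphism of $\Gamma$, so flexible HS-stability (Definition \ref{def:flex_stab}) cannot be fed this input. Asymptotic multiplicativity would force $\mathrm{Ad}\,\varphi_n(c(\gamma,\gamma'))$ to be close to the identity in the normalized HS norm of the $d_n^2$-dimensional space $M_{d_n}(\mathbb C)$, i.e.\ $\varphi_n(c(\gamma,\gamma'))$ to be close to a scalar; but you have just (correctly) arranged that $\varphi_n(a)$ is asymptotically a Haar unitary for every $1\neq a\in A$, which is as far from scalar as possible. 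The descent of $\mathrm{Ad}$ to a genuine representation of $\Gamma$ happens only on $L^2(\mathcal N)$ inside the ultraproduct, where $\varphi(A)$ is literally central; that space is infinite dimensional with no $\mathrm{Ad}$-invariant finite-dimensional exhaustion, so it is outside the scope of the stability hypothesis. Second, even granting some adjoint-type asymptotic homomorphism, passing to $\mathrm{Ad}$ erases exactly the datum you need: the adjoint of any projective representation is an honest representation regardless of whether $[\chi\circ c]$ vanishes, so approximating it by genuine representations $\pi_n$ of $\Gamma$ carries no cohomological obstruction, and the claim that ``$\varphi_n(s(\gamma))$ and the lift of $\pi_n(\gamma)$ agree modulo the centre'' has no mechanism behind it ($\pi_n$ is an arbitrary representation on a space of dimension roughly $d_n^2$, not the adjoint of anything acting on $\mathbb C^{d_n}$). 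Relatedly, property (T) never enters your argument through an identified theorem. The paper's proof avoids both problems by working with the projective representations themselves: Thom's lemma (Lemma \ref{lem:Thom}) gives Connes embeddability of the twisted algebras $L_{\chi\circ c}(\Gamma)$; connectedness of $\widehat A$ (here torsion-freeness of $A$ is used) yields characters $\chi_n\to 1$ with $[\chi_n\circ c]\neq 0$; the canonical unitaries of $L_{\chi_n\circ c}(\Gamma)$ then form a hyperlinear approximation of $\Gamma$ itself; and stability plus the Stinespring-type dilation of Proposition \ref{prop:dilation} produce a projective representation with cocycle $\chi_n\circ c$ admitting an almost invariant vector, whence the Nicoara--Popa--Sasyk projective form of property (T) (Theorem \ref{thm:NPS}) forces $\chi_n\circ c$ to be a coboundary --- the contradiction. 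That last theorem is the concrete rigidity input your sketch is missing.
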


Flexible HS-stability was defined by Becker and Lubotzky \cite{BL} after they proved that infinite hyperlinear 
property (T) groups are never HS-stable, in hopes that some might be flexibly HS-stable. The result above, coupled
with Corollaries \ref{cor:Sp} and \ref{cor:rand}, show that for many property (T) groups,
establishing flexible HS-stability would in fact give the existence of a non-hyperlinear group. Thus,
deciding flexible HS-stability in this scenario is extremely interesting. With that said, there is no
known example of a group that is flexibly HS-stable, but not HS-stable.

In fact, we can show that a significantly weakened version of flexible HS-stability, called
\emph{weak ucp-stability}, is enough to establish the non-hyperlinearity guaranteed in Theorem
\ref{thm:main}. Recall the notion of a hyperlinear approximation (see \cite{Burton}):

\begin{defin}\label{def:hyp}
Given a countable group $\Gamma$, a sequence of maps $\varphi_n : \Gamma \to U(d_n)$ is said to be a 
\textbf{hyperlinear approximation} if it is an asymptotic homomorphism, 
and for every nontrivial $g \in \Gamma$, we have $\liminf_n \Vert \varphi(g) - 1 \Vert_{2, \tau_{d_n}} \geq \sqrt{2}   $.

\end{defin}

It follows from the definitions that $\Gamma$ is hyperlinear if an only if it admits a hyperlinear approximation.

\begin{defin}\label{def:weak_ucp_stab}
A countable group $\Gamma$ is said to be \textbf{weakly ucp-stable} if for every sequence of finite dimensional Hilbert
spaces $\mathcal H_n$ and a hyperlinear approximation $\varphi_n: \Gamma \to U(\mathcal H_n)$
there exists a sequence of homomorphisms $\pi_n : \Gamma \to U(\widehat{\mathcal H}_n)$ for some 
(possibly infinite dimensional) Hilbert spaces $\widehat{\mathcal H}_n$ containing $\mathcal H_n$ with
$$ \Vert  \varphi_n(g) - P_n \pi_n(g) P_n\Vert_{2, \tau_{\dim \mathcal H_n}} \xrightarrow[n \to \infty]{} 0
\text{ for all $g \in \Gamma$,}$$
where $P_n: \widehat{\mathcal H}_n \to \mathcal H_n$ is the projection onto $\mathcal H_n$.
\end{defin}

Weak ucp-stability relaxes flexible HS-stability in two regards: we only require a rigidity property for 
hyperlinear approximations, and we only require the existence of possibly \emph{infinite} 
dimensional unitary representations whose compressions comprise of corrections. This notion is a combination of 
definitions from \cite{BL}, \cite{AK}. The term "ucp" (unital completely positive) stems from the fact that maps
of the form $g \mapsto P_n \pi_n(g) P_n$ are restrictions of ucp maps from the maximal group C$^*$-algebra
$C^*(\Gamma)$ to $\Gamma$ (see Section \ref{prelim:ucp}).

The following is a strengthened version of our main theorem: 

\begin{thm} \label{thm:main2}
Under the assumptions of Theorem \ref{thm:main}, if $\Gamma$ is weakly ucp-stable, then $G$ is not hyperlinear.
\end{thm}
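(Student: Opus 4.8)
The plan is to argue by contradiction. Suppose $G$ is hyperlinear and fix a hyperlinear approximation $\varphi_n : G \to U(d_n)$ as in Definition \ref{def:hyp}. Since $A \subseteq Z(G)$ is central and $\varphi_n$ is an asymptotic homomorphism, the unitaries $\varphi_n(a)$ for $a \in A$ are asymptotically central, i.e. $\Vert [\varphi_n(a),\varphi_n(g)]\Vert_{2,\tau_{d_n}} \to 0$ for every $g \in G$, while the hyperlinearity bound forces $\liminf_n \Vert \varphi_n(a)-1\Vert_{2,\tau_{d_n}} \geq \sqrt 2$ for every $a \in A\setminus\{1\}$. First I would pass to the tracial ultraproduct $M=\prod_{\mathcal U}M_{d_n}(\mathbb C)$ and use the standard reformulation of hyperlinearity (upgrading the $\sqrt2$ separation to the canonical trace) to view $\varphi$ as a trace-preserving embedding of $L(G)$, under which $\varphi|_A$ is the regular representation of $A$; in particular the spectral distribution of $\varphi(A)$ over the dual $\widehat A$ is the Haar measure, which charges every neighbourhood of the trivial character. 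Choosing a set-theoretic section $s:\Gamma\to G$ with cocycle $\omega:\Gamma\times\Gamma\to A$ representing the nonzero class $[\omega]\in H^2(\Gamma;A)$, the maps $\psi_n := \varphi_n\circ s$ become asymptotic \emph{projective} representations of $\Gamma$ with asymptotically central multiplier $c_n(g,h)=\varphi_n(\omega(g,h))$. This reduces matters to understanding these twisted approximations.

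Next I would disintegrate over the central character. Cutting $\mathbb C^{d_n}$ by the spectral projections of $\varphi_n(A)$ supported on a small neighbourhood of a character $\chi\in\widehat A$, and using asymptotic centrality to see that these projections are asymptotically $\varphi_n$-invariant, the resulting compressions $\psi_n^\chi$ are asymptotic $\sigma_\chi$-projective representations of $\Gamma$ with the \emph{scalar} multiplier $\sigma_\chi=\chi\circ\omega\in Z^2(\Gamma;\mathbb T)$. For characters in a shrinking neighbourhood of the trivial character, $\sigma_\chi$ tends to $1$ uniformly on any fixed finite window, so, after checking that the $\sqrt2$ separation survives the compression (which it should, since $\varphi_n(g)$ asymptotically commutes with the spectral projections of $\varphi_n(A)$), these $\psi_n^\chi$ are genuine hyperlinear approximations of $\Gamma$. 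Weak ucp-stability (Definition \ref{def:weak_ucp_stab}) then supplies genuine representations $\pi_n:\Gamma\to U(\widehat{\mathcal H}_n)$ whose compressions approximate $\psi_n^\chi$.

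The decisive step, and the one I expect to be the main obstacle, is to convert these corrections into a (near) splitting of the extension and thereby contradict $[\omega]\neq0$. The tension to exploit is that, since $A$ is torsion-free, $\widehat A$ is connected, so the homomorphism $\chi\mapsto[\sigma_\chi]$ has a proper closed, hence Haar-null, kernel; consequently for almost every character arbitrarily close to the trivial one the class $[\sigma_\chi]$ is \emph{nonzero}, even though $\sigma_\chi$ is asymptotically trivial on finite windows. Here property (T) must do the rigidity work: using the vanishing of $H^1(\Gamma;\cdot)$ for unitary representations together with the Kazhdan spectral gap, the plan is to propagate the stability correction $\pi_n$ — valid near the trivial character — uniformly across the central spectrum, forcing the approximate cocycle $c_n$ to be asymptotically trivialised by central unitaries; equivalently, the approximate extension splits, making $\varphi_n(a)$ asymptotically trivial and contradicting the $\sqrt2$ lower bound. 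The two hypotheses enter exactly in removing stray phases that would obstruct this trivialisation and in guaranteeing the kernel above is proper: if $\Gamma$ is perfect then $\Hom(\Gamma,\mathbb T)=0$, which by universal coefficients makes $H^2(\Gamma;\mathbb Z)$ torsion-free so that $[\omega]$ survives every generic $\chi_*$, and rigidifies the section phases; if instead $G$ has property (T) and $A=\mathbb Z$, the absence of homomorphisms $G\to\mathbb T$ plays the same role and the spectral gap of $G$ obstructs the central Haar unitary directly. The hardest technical points will be the uniform-in-$\chi$ propagation of the correction and the preservation of the $\sqrt2$ separation under compression to thin spectral slices.
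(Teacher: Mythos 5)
Your outline shares the paper's skeleton (push the $A$-valued cocycle forward along characters $\chi$ near the trivial one, use torsion-freeness of $A$ to get connectedness of $\widehat A$ and hence nontrivial classes $[\chi\circ c]$ arbitrarily close to the trivial character, feed the resulting almost-representations of $\Gamma$ into weak ucp-stability, and let property (T) produce the contradiction), but the two steps you yourself flag as the hard ones are genuine gaps, and the mechanism you propose for the second one is not the right one. First, the spectral-slice disintegration: compressing $\varphi_n$ to the spectral subspace of $\varphi_n(A)$ over a small neighbourhood of $\chi$ replaces the normalised trace by the trace conditioned on a projection of small trace (the Haar measure of the neighbourhood), so all your $\Vert\cdot\Vert_{2,\tau}$ error estimates get amplified by $\mu(U)^{-1/2}$; making the neighbourhood shrink (to pin down the multiplier as the scalar $\chi\circ\omega$) and keeping the errors small are in direct tension, and the $\sqrt2$ separation on the slice does not come for free either. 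The paper avoids this entirely by quoting Thom's lemma (Lemma \ref{lem:Thom}): hyperlinearity of $G$ is \emph{equivalent} to Connes embeddability of every twisted algebra $L_{\chi\circ c}(\Gamma)$, which hands you, for each $\chi$, a genuine $\chi\circ c$-projective family of unitaries in an ultraproduct of matrix algebras with trace $0$ off the identity --- exactly the clean input your slices are trying to manufacture.

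Second, and more seriously, the ``decisive step'' is not an argument. No propagation of the correction ``uniformly across the central spectrum'' is needed, and the contradiction is not with the $\sqrt2$ separation of $\varphi_n$ on $A$: it is with the nontriviality of the single class $[\chi\circ c]$ for one fixed $\chi$ close enough to the trivial character. The missing tool is the Nicoara--Popa--Sasyk projective form of property (T) (Theorem \ref{thm:NPS}) combined with Stinespring dilation (Proposition \ref{prop:dilation}): the ucp correction $\psi$ supplied by weak ucp-stability dilates to a genuine $*$-representation $\rho$ of $C^*(\Gamma)$ into an amplification $\mathcal M\,\bar\otimes\,B(\mathcal H)$, and on the bimodule $L^2$-space one forms the projective representation $\pi(g)\eta=(u_g\otimes P)\cdot\eta\cdot\rho(v_g)^*$ with multiplier $\chi\circ c$; the closeness of $\psi(v_s)$ to $u_s$ makes $\widehat{1\otimes P}$ an almost invariant vector, and Theorem \ref{thm:NPS} then forces $\chi\circ c$ to be a coboundary --- the contradiction. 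Vanishing of $H^1$ and ``removing stray phases'' do not substitute for this. Finally, the properness of the kernel of $\chi\mapsto[\chi\circ c]$ is not a consequence of universal coefficients as you suggest; it is the content of Propositions \ref{prop:coeff_change_perfect} and \ref{prop:coeff_change_(T)} (automatic continuity plus Pontryagin duality in the perfect case, an $\mathbb R$-valued cocycle argument in the property (T) case), and the kernel need not be closed, so the paper argues instead that a proper subgroup of a connected group cannot contain a neighbourhood of the identity.
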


Since flexible HS-stability implies weak ucp-stability, Theorem \ref{thm:main2} obviously implies \ref{thm:main}.
Weak ucp-stability is a strictly weaker property when compared with flexible HS-stability:
indeed, by Corollary 1.7 in \cite{ISW} any group $\Gamma$ such that $C^*(\Gamma)$ possesses
Kirchberg's local lifting property (LLP) is weakly ucp-stable.
Thus, by the Choi-Effors lifting theorem
\cite{CE}, any amenable group is weakly ucp-stable. Yet, an amenable group which is not
residually finite cannot be flexibly HS-stable (see \cite{BL}).

The proof of Theorem \ref{thm:main2} is inspired by techniques of Ioana, Spaas and Wiersma \cite{ISW}, and it goes as follows: Assume by contradiction that $G$ is hyperlinear and $\Gamma$ is weakly ucp-stable.
By a lemma of Thom \cite{Thom1}, hyperlinearity of $G$ implies that every character $\chi$ of the central subgroup $A$ defines a projective unitary
representation $\pi_\chi$ of $\Gamma$ that generates a Connes embeddable von Neumann algebra.
It can then be shown that there exist $\chi_n \in \widehat{A}$ arbitrarily close to the trivial character, whose
projective representations $\pi_{\chi_n}$ are not cohomologous to true unitary representations.
Using Connes embeddability, we can find finite dimensional approximations for $\pi_{\chi_n}$. As $\chi_n$ was almost
trivial, these finite dimensional approximations give a hyperlinear approximation of $\Gamma$.
By the stability assumption, we obtain a sequence of ucp maps $\varphi_n$ which approximate $\pi_{\chi_n}$. 
From here, we proceed as in the proof of Theorem 4.3 in \cite{ISW} and obtain a contradiction to property (T).

\subsection*{\textbf{Gromov random groups.}} 
Somewhat surprisingly, the phenomenon described in Theorem \ref{thm:main2} can be shown to be present with overwheliming probability in a random finitely presented group:
Let $\mathbb F$ denote the free group on $k$ generators, freely generated by $X = \{x_1, \dots x_k\}$, for some $k \geq 2$. Denote by 
$\mathbb S_l$ the sphere of radius $l$ in the Cayley graph of $\mathbb F$ with respect to $X \cup X^{-1}$, that is, the 
set of reduced words in $x_i^{\pm 1}$ of length $l$ in $\mathbb F$. 
Fix a \emph{density} $0\leq d$ and pick 
$\lfloor \vert \mathbb S_l \vert^d \rfloor = \lfloor ((2k)(2k-1)^{l-1})^d \rfloor$ independently uniformly sampled 
elements from $\mathbb S_l$ (with repetitions). Denote the resulting set of relations by $R$, and consider the finitely presented group 
$\Gamma =  \langle X \vert R \rangle$. In this case, $\Gamma$ is called a \emph{Gromov random group at density $d$ with $k$ generators
and relators of length $l$}, denoted by $\Gamma \sim \mathcal G(k,l,d)$. 
For technical reasons, we will only consider lengths $l$ divisible by $3$.
A property $P$ of groups is said to hold \emph{with overwhelming probability (w.o.p.)} if with probability going to $1$, as $l \to \infty$,
a group $\Gamma \sim \mathcal G(k,3l,d) $ will satisfy $P$, that is: 
$$\lim_{l \to \infty} \mathbb P\left(\Gamma \sim \mathcal G(k,3l,d)\text{ satisfies }P\right) = 1.$$

\begin{prop} \label{prop:rand}
Given $ 1/3 < d < 1/2$, with overwhelming probability,  $\Gamma \sim \mathcal G(k,3l,d)$ will satisfy the 
assumptions of Theorem \ref{thm:main2}.
\end{prop}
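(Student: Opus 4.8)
The plan is to check, with overwhelming probability, the purely structural hypotheses of Theorem \ref{thm:main2} (equivalently, those of Theorem \ref{thm:main}): that $\Gamma \sim \mathcal G(k,3l,d)$ has property (T), and that it admits a non-split central extension by a torsion-free abelian group. I will aim for the second branch of the either/or hypothesis, taking $A = \mathbb Z$ and arranging that the total group $G$ again has property (T), rather than attempting to show $\Gamma$ is perfect (the abelianization is finite but, because of parity constraints on exponent sums of random relators, need not be trivial). Note that weak ucp-stability is the \emph{conditional} hypothesis of Theorem \ref{thm:main2} and is not among the assumptions to be verified here. For Step 1, recall that for $d>1/3$ a random group has property (T) w.o.p. — this is precisely why relators of length divisible by $3$ are used, so that \.Zuk's theorem (in its corrected form) applies through the triangular model — while for $d<1/2$ the random group is infinite and torsion-free w.o.p. by Gromov's density theorem. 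Hence for $1/3 < d < 1/2$ both hold simultaneously, and in particular $H_1(\Gamma,\mathbb Z) = \Gamma^{ab}$ is finite, so that $\rank H_1(\Gamma,\mathbb Z) = 0$.

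The heart of the argument is Step 2: showing the Schur multiplier $H_2(\Gamma,\mathbb Z)$ is infinite. At density $d<1/2$, reduced van Kampen diagrams satisfy Gromov's isoperimetric inequality with a positive linear coefficient; since a reduced spherical diagram has empty boundary, the inequality forces it to have zero area, so the presentation $2$-complex $X$ of $\langle X \mid R\rangle$ is (diagrammatically, hence topologically) aspherical w.o.p. Thus $X$ is a two-dimensional $K(\Gamma,1)$ and $H_2(\Gamma,\mathbb Z) = \ker\bigl(\partial_2 \colon \mathbb Z^{|R|} \to \mathbb Z^{k}\bigr)$ is free abelian. Comparing the Euler characteristic $\chi(X) = 1 - k + |R|$ with $\chi(X) = b_0 - b_1 + b_2 = 1 - 0 + b_2$ (using $b_1 = 0$ from Step 1 and vanishing of higher homology by two-dimensionality) gives
\[
\rank H_2(\Gamma,\mathbb Z) = |R| - k = \bigl\lfloor |\mathbb S_{3l}|^{d}\bigr\rfloor - k,
\]
which is positive (in fact enormous) once $l$ is large. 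So $H_2(\Gamma,\mathbb Z)$ is a nonzero free abelian group w.o.p.

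For Step 3, freeness and positive rank give $\Hom(H_2(\Gamma,\mathbb Z),\mathbb Z)\neq 0$, so the universal coefficient sequence yields a class $\omega \in H^2(\Gamma,\mathbb Z)$ of infinite order. Since central extensions of $\Gamma$ by $\mathbb Z$ are classified by $H^2(\Gamma,\mathbb Z)$, the class $\omega$ defines a non-split central extension $1 \to \mathbb Z \to G \to \Gamma \to 1$ with $A = \mathbb Z$ torsion-free, as required. To see that $G$ has property (T), read off from the five-term exact sequence of the extension that the transgression $H_2(\Gamma,\mathbb Z)\to \mathbb Z$ is the homomorphism part of $\omega$; this is nonzero because $\omega$ has infinite order, so the image of $\mathbb Z$ in $G^{ab}$ is finite, and together with $\Gamma^{ab}$ finite this forces $G^{ab}$ to be finite. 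By the criterion for property (T) of central extensions of property (T) groups — a central extension of a property (T) group by a finitely generated abelian group has property (T) if and only if the image of the kernel in the abelianization of the total group is finite — the group $G$ has property (T). This verifies all structural hypotheses of Theorem \ref{thm:main2}, and since the relevant events are finite in number, they hold simultaneously w.o.p.

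The two places carrying real content are the asphericity input in Step 2 and the central-extension criterion in Step 3. The asphericity is what legitimizes replacing $H_2(\Gamma,\mathbb Z)$ by the transparent kernel of $\partial_2$ and hence guarantees the infinite-order class $\omega$; it is a known but nontrivial feature of the density model below $1/2$, and I expect formalizing the overwhelming-probability bound (via Gromov's isoperimetric inequality) to be the main technical obstacle. The criterion in Step 3 must be invoked with care, since property (T) is \emph{not} inherited by arbitrary central extensions — the split extension $\Gamma \times \mathbb Z$ already fails it — and it is exactly the infinite order of $\omega$, forcing $G^{ab}$ finite, that excludes this degenerate behaviour.
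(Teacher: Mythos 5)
Your proof is correct, but it verifies a different branch of the hypotheses of Theorem \ref{thm:main2} than the paper does, and the divergence is substantive. The paper proves that $\Gamma$ is \emph{perfect} with overwhelming probability, by invoking the Kozma--Lubotzky theorem on the absence of nontrivial finite-dimensional linear representations of random groups, and then uses the ``$\Gamma$ perfect'' branch; the remainder of its argument (asphericity and geometric dimension $2$, $\chi(\Gamma)\geq 2$, the Euler--Poincar\'e formula, universal coefficients) coincides with your Step 2. You instead settle for $\rank_{\mathbb Z} H_1(\Gamma,\mathbb Z)=0$, which already follows from property (T) alone, and compensate by verifying the ``$G$ has property (T) and $A=\mathbb Z$'' branch: you choose $\omega$ with nonzero image in $\Hom(H_2(\Gamma,\mathbb Z),\mathbb Z)$, read off from the five-term exact sequence that the image of $\mathbb Z$ in $G^{\mathrm{ab}}$ is finite, and invoke the central-extension criterion for (T). That criterion is correct but deserves one explicit line: setting $Z_1=\mathbb Z\cap[G,G]$, finiteness of the image of $\mathbb Z$ in $G^{\mathrm{ab}}$ makes $Z_1$ of finite index in $\mathbb Z$, so $G/Z_1$ is a central extension of $\Gamma$ by a finite group and hence has (T), and then $G$ itself has (T) by the standard fact (see Section 1.7 of \cite{BDV}) that a central extension has (T) whenever the quotient does and the kernel lies in the closure of the commutator subgroup. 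As for what each route buys: the paper's is shorter once Kozma--Lubotzky is granted, but it is sensitive to a parity issue --- when $3l$ is even every relator has even exponent sum, so $\Gamma$ surjects onto $\mathbb Z/2\mathbb Z$ with probability one and cannot be perfect; the paper's parenthetical about ``uneven lengths'' only covers half the values of $l$. Your route sidesteps both Kozma--Lubotzky and this parity problem, since only the vanishing of the first Betti number (not of $H_1$ itself) is used, at the cost of the extra cohomological bookkeeping needed to equip $G$ with property (T). Both arguments establish the proposition; yours is arguably the more robust of the two.
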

As an immediate consequence, we get the following:

\begin{cor} \label{cor:rand}
Given $1/3 < d < 1/2$, if
$$ \limsup_{l \to \infty} \mathbb P \left(\Gamma \sim \mathcal G(k,3l,d)\;\text{ is weakly ucp-stable}\right) > 0,$$
then there exists a non-hyperlinear group.
\end{cor}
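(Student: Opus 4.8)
The plan is to deduce Corollary \ref{cor:rand} as a purely formal consequence of Proposition \ref{prop:rand} and Theorem \ref{thm:main2}, via an elementary inclusion–exclusion estimate. For each admissible length $l$, the sample space of $\mathcal G(k,3l,d)$ is finite, so every property of the resulting group defines a measurable event. I would introduce the two events
$$
A_l = \{\Gamma \sim \mathcal G(k,3l,d) \text{ satisfies the assumptions of Theorem \ref{thm:main2}}\},
\qquad
B_l = \{\Gamma \sim \mathcal G(k,3l,d) \text{ is weakly ucp-stable}\}.
$$
By Proposition \ref{prop:rand} the assumptions of Theorem \ref{thm:main2} hold with overwhelming probability, i.e. $\mathbb P(A_l) \to 1$ as $l \to \infty$, while the standing hypothesis of the corollary is precisely that $\limsup_{l \to \infty} \mathbb P(B_l) > 0$. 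The goal is to exhibit a single group lying in $A_l \cap B_l$ for some $l$.

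The key step is the Bonferroni-type lower bound $\mathbb P(A_l \cap B_l) \ge \mathbb P(A_l) + \mathbb P(B_l) - 1$, valid because $\mathbb P(A_l \cup B_l) \le 1$. Since $\mathbb P(A_l) \to 1$ is a genuine limit, taking $\limsup$ over $l$ and using that the limsup of a sum with one convergent summand splits gives
$$
\limsup_{l \to \infty} \mathbb P(A_l \cap B_l) \ \ge\ \lim_{l \to \infty}\mathbb P(A_l) + \limsup_{l \to \infty}\mathbb P(B_l) - 1 \ =\ \limsup_{l \to \infty}\mathbb P(B_l) \ >\ 0.
$$
In particular $\mathbb P(A_{l_0} \cap B_{l_0}) > 0$ for some $l_0$, so there is a group $\Gamma$ in the (finite) support of $\mathcal G(k,3l_0,d)$ that simultaneously satisfies the assumptions of Theorem \ref{thm:main2} and is weakly ucp-stable.

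For such a $\Gamma$, the assumptions of Theorem \ref{thm:main2} in particular furnish a non-split central extension $1 \to A \to G \to \Gamma \to 1$ of the required form, and Theorem \ref{thm:main2} then concludes that the corresponding group $G$ is not hyperlinear, producing the desired non-hyperlinear group. The only genuine content is the probabilistic overlap step, and the main (minor) obstacle is conceptual rather than technical: one must observe that an event of probability tending to $1$ necessarily intersects any event of positive limsup probability, which the inequality above makes precise. No independence between $A_l$ and $B_l$ is needed, and $\limsup$ rather than $\lim$ in the hypothesis is harmless since we only require a single good length $l_0$.
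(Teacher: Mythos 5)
Your proof is correct and is exactly the argument the paper intends: the corollary is stated there as an ``immediate consequence'' of Proposition \ref{prop:rand}, and the only content is precisely the overlap observation $\mathbb P(A_l \cap B_l) \ge \mathbb P(A_l) + \mathbb P(B_l) - 1$ that you spell out. Your write-up just makes explicit what the paper leaves implicit.
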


One may also view Corollary \ref{cor:rand} as a statement about random sets of equations over groups, once stability is interpreted
using generators and relations (see Lemma \ref{lem:ucp_stab_defs}).

It is a well known theorem of Gromov (Theorem 11 in \cite{Ollivier}) that at such densities, $\Gamma$ will be a torsion-free hyperbolic group with overwhelming probability. 
To the best of the author's knowledge, the only examples of hyperbolic groups which are known to be flexibly HS-stable are virtually free groups (see \cite{GS}).
It also does not seem to be known whether there exists a hyperbolic group that is not flexibly HS-stable.
Thus, a natural direction of further research is to find more examples/non-examples of flexible HS-stability in the hyperbolic realm.
For more information on Gromov random groups, see \cite{Ollivier}. 

\subsection*{\textbf{Lattices in non-simply connected Lie groups.}}
Another interesting consequence of Theorem \ref{thm:main2} is the following:

\begin{cor} \label{cor:Sp}
Let $G$ be a connected semisimple Lie group with infinite cyclic fundamental group and with property (T), and let
$\Gamma \leq G$ be a lattice. Let $\widetilde{G} \twoheadrightarrow G$ be the universal cover, 
$\widetilde{\Gamma} \leq \widetilde{G}$ be the pullback of $\Gamma$ under the covering map. If $\Gamma$ is weakly ucp-stable,
then $\widetilde{\Gamma}$ is not hyperlinear.
\end{cor}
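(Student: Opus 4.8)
The plan is to exhibit $\widetilde{\Gamma}$ as the middle term of a central extension of $\Gamma$ by $\mathbb{Z}$ and then apply Theorem \ref{thm:main2}, taking $A = \mathbb{Z}$, the lattice $\Gamma$ as the quotient, and $\widetilde{\Gamma}$ in the role of the group $G$ of that theorem. Write $p \colon \widetilde{G} \to G$ for the covering map. Since $\widetilde{G}$ is the universal cover of a connected Lie group, $\ker p$ is a discrete central subgroup of $\widetilde{G}$ isomorphic to $\pi_1(G) \cong \mathbb{Z}$. Restricting $p$ to $\widetilde{\Gamma} = p^{-1}(\Gamma)$ therefore yields a central extension
$$ 1 \longrightarrow \mathbb{Z} \longrightarrow \widetilde{\Gamma} \xrightarrow{\; p \;} \Gamma \longrightarrow 1, $$
with $\mathbb{Z}$ countable and torsion-free, exactly of the shape required in Theorem \ref{thm:main}. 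It remains to verify the three standing hypotheses: that $\Gamma$ is a countable property (T) group, that $\widetilde{\Gamma}$ has property (T), and that the extension is non-split.

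Next I would settle the property (T) claims. Being a lattice in the property (T) group $G$, the group $\Gamma$ is countable and inherits property (T). For $\widetilde{\Gamma}$, the point is that it is itself a lattice in $\widetilde{G}$: the covering map $p$ descends to a homeomorphism $\widetilde{G}/\widetilde{\Gamma} \xrightarrow{\sim} G/\Gamma$, so $\widetilde{\Gamma}$ has finite covolume, and it is discrete because $p$ is a covering map. Now $\widetilde{G}$ is again a connected semisimple Lie group with the same Lie algebra as $G$. By the classical criterion --- a connected semisimple Lie group has property (T) if and only if none of its simple factors is locally isomorphic to $SO(n,1)$ or $SU(n,1)$, a condition depending only on the Lie algebra --- the group $\widetilde{G}$ inherits property (T) from $G$, and hence so does its lattice $\widetilde{\Gamma}$.

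Finally, non-splitness follows formally from property (T). If the extension split, then, since $\mathbb{Z}$ is central, the splitting would identify $\widetilde{\Gamma}$ with the direct product $\mathbb{Z} \times \Gamma$, which surjects onto $\mathbb{Z}$. This contradicts property (T) of $\widetilde{\Gamma}$, as property (T) groups have finite abelianization and so admit no surjection onto $\mathbb{Z}$. Hence the extension is non-split, and with $A = \mathbb{Z}$ and $\widetilde{\Gamma}$ of property (T) all hypotheses of Theorem \ref{thm:main2} are met; the weak ucp-stability of $\Gamma$ then forces $\widetilde{\Gamma}$ to be non-hyperlinear.

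I expect the only genuinely non-formal input to be property (T) of the universal cover $\widetilde{G}$: property (T) is \emph{not} inherited by arbitrary central $\mathbb{Z}$-extensions --- the split extension $\mathbb{Z} \times \Gamma$ being the standard cautionary example --- and it is precisely the fact that $\widetilde{G}$ stays semisimple with unchanged Lie algebra (rather than acquiring a central $\mathbb{R}$- or $\mathbb{Z}$-direct factor) that rescues the argument. The remaining ingredients, chiefly the verification that $\widetilde{\Gamma}$ is discrete of finite covolume in $\widetilde{G}$, are routine covering-space theory.
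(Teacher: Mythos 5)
Your proposal is correct and follows essentially the same route as the paper: realize $\widetilde{\Gamma}$ as a non-split central $\mathbb{Z}$-extension of $\Gamma$, get property (T) for $\widetilde{\Gamma}$ by passing property (T) from $G$ to $\widetilde{G}$ (the paper cites Theorem 3.5.4 in \cite{BDV}, which is exactly the ``depends only on the Lie algebra'' fact you invoke) and then to the lattice $\widetilde{\Gamma}$, and rule out splitness because a property (T) group has no surjection onto $\mathbb{Z}$. The only difference is that you spell out the routine verification that $\widetilde{\Gamma}$ is a lattice in $\widetilde{G}$, which the paper takes for granted.
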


A concrete case to keep in mind is of the symplectic group $G = \Sp_{2g}(\mathbb R)$, $g \geq 2$ and its lattice of
integer points $\Sp_{2g}(\mathbb Z)$. As $\pi_1(\Sp_{2g}(\mathbb R)) = \mathbb Z$, we get a a central extension of the form:

\begin{center}
\begin{tikzcd}
1 \arrow[r] & \mathbb Z \arrow[r] & \widetilde{\Gamma} \arrow[r] & \Sp_{2g}(\mathbb Z) \arrow[r] & 1,
\end{tikzcd}
\end{center}

where $\widetilde{\Gamma}$ is a lattice in the universal cover $\widetilde{\Sp}_{2g}(\mathbb R)$. The group $\widetilde{\Gamma}$ has been shown to be non-residually finite by Deligne \cite{Deligne},
and the corollary gives a potential way to show that it is  not even hyperlinear (or sofic).

Intrestingly, the work of Bader, Lubotzky, Sauer and Weinberger \cite{BLSW} shows that $\widetilde{\Sp}_{2g}(\mathbb Z)$ is \emph{not} Frobenius-approximated for $g\geq 3$.
This means that it does not satisfy the analog of Definition \ref{def:hyp_intro} where instead of using the
normalised Hilbert-Schmidt norm, we consider it without normalisation (the \emph{Frobenius norm}). 
Therefore, deciding whether these groups are hyperlinear becomes even more interesting.
With that said, there is no known relation between hyperlinearity and Frobenius-approximability.
We refer to \cite{DGLT} for more on Frobenius-approximation and stability.

Let us comment on how our work compares with the result of Bowen and Burton \cite{BB}, stating that if
$\PSL_d(\mathbb Z)$ were flexibly P-stable for some $d \geq 5$, then there exists a non-sofic group. 
Flexible P-stability can be defined analogously to flexible HS-stability, simply by replacing unitary groups 
equipped with Hilbert-Schmidt distances by finite symmetric groups equipped with normalised Hamming distances.
The problem of providing analogous results to the one of Bowen and Burton in the Hilbert-Schmidt setting was
asked by G. Arzhansteva during the seminar 'Stability and Testability' at the IAS in November 2020.
The argument in \cite{BB} is based on a HNN-extension construction along different free subgroups of $\PSL_d (\mathbb Z)$, 
a completely different approach from ours.
Even though we exhibit other arithmetic groups (such as $\Sp_{2g}(\mathbb Z)$) which satisfy the analogous 
Hilbert-Schmidt statement, the case of $\PSL_d(\mathbb Z)$ is left open. It would be interesting to establish it nonetheless. 

\subsection*{Infinitely presented property (T) groups.}
Using the techniques established for the results above, we also prove the following:

\begin{thm} \label{thm:inf_presented}
Let $\Gamma$ be an infinitely presented property (T) group. If $\Gamma$ is weakly ucp-stable, then there exists
a non-hyperlinear group.
\end{thm}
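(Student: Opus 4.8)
The plan is to reduce everything to Theorem \ref{thm:main2} applied with $\Gamma$ itself as the quotient group. Since $\Gamma$ is assumed weakly ucp-stable and has property (T), it suffices to exhibit a single non-split central extension $1 \to A \to G \to \Gamma \to 1$ with $A$ a countable torsion-free abelian group satisfying one of the two auxiliary hypotheses: either $\Gamma$ is perfect, or $A = \mathbb{Z}$ and $G$ again has property (T). Granting such a $G$, Theorem \ref{thm:main2} immediately yields that $G$ is non-hyperlinear, which is exactly the desired conclusion. Thus the entire content of the statement is the construction of an appropriate extension, and this is where infinite presentability must be used; the stability and property (T) hypotheses are then merely fed into Theorem \ref{thm:main2}.

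I would write $\Gamma = F/N$ with $F$ free of finite rank $k$ (property (T) groups are finitely generated) and work with the relation module $N^{\mathrm{ab}} = N/[N,N]$, viewed as a $\mathbb{Z}[\Gamma]$-module, together with the standard exact sequence of $\mathbb{Z}[\Gamma]$-modules associated to the presentation,
\begin{equation*}
0 \to H_2(\Gamma,\mathbb{Z}) \to N^{\mathrm{ab}} \to \mathbb{Z}[\Gamma]^{k} \to \mathbb{Z}[\Gamma] \to \mathbb{Z} \to 0 .
\end{equation*}
The organizing principle is that $\Gamma$ is infinitely presented precisely when $N^{\mathrm{ab}}$ fails to be finitely generated over $\mathbb{Z}[\Gamma]$. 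Property (T) enters through $H_1(\Gamma,\mathbb{Q}) = 0$ and finiteness of $\Gamma^{\mathrm{ab}}$. The aim is to use infinite generation of $N^{\mathrm{ab}}$, which cannot be absorbed entirely into the free terms $\mathbb{Z}[\Gamma]^{k}$ and $\mathbb{Z}[\Gamma]$ nor into the finite group $\Gamma^{\mathrm{ab}}$, to force the Schur multiplier $H_2(\Gamma,\mathbb{Z})$ to carry a free quotient, i.e. to produce a surjection $H_2(\Gamma,\mathbb{Z}) \twoheadrightarrow \mathbb{Z}$. I would then take $G$ to be the stem extension $1 \to \mathbb{Z} \to G \to \Gamma \to 1$ classified by the corresponding class in $\Hom(H_2(\Gamma,\mathbb{Z}),\mathbb{Z}) \subseteq H^2(\Gamma,\mathbb{Z})$. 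Because the central $\mathbb{Z}$ then lies in $[G,G]$, the abelianization $G^{\mathrm{ab}}$ stays finite, so $G$ inherits property (T) and the $A = \mathbb{Z}$ branch applies; when $\Gamma$ is perfect one may instead allow $A$ to be any torsion-free quotient of the multiplier. If a direct argument at the level of $\Gamma$ proves awkward, an alternative is to invoke Shalom's theorem that $\Gamma$ is a quotient of a finitely presented property (T) group $\widetilde{\Gamma}$, whose kernel $M$ is then not finitely generated as a normal subgroup, and to attempt to build the torsion-free central kernel from the coinvariants $M/[\widetilde{\Gamma},M]$.

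The hard part will be guaranteeing that the central kernel can be taken \emph{torsion-free} while keeping the extension non-split, and the obstruction is located entirely in torsion of the Schur multiplier. For torsion-free $A$ one has $H^2(\Gamma,A) = \Hom(H_2(\Gamma,\mathbb{Z}),A) \oplus \mathrm{Ext}(\Gamma^{\mathrm{ab}},A)$; when $\Gamma$ is perfect this reduces to the first summand, which vanishes for torsion-free $A$ unless the multiplier has positive rank, and when $\Gamma$ is not perfect the $\mathrm{Ext}$ summand is nonzero but the extensions it produces have infinite $G^{\mathrm{ab}}$ and kernel of rank $> 1$, hence fail both auxiliary hypotheses of Theorem \ref{thm:main2}. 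Consequently the whole argument hinges on upgrading the purely module-theoretic fact that $N^{\mathrm{ab}}$ is infinitely generated to the homological fact that $H_2(\Gamma,\mathbb{Z})$ has a free quotient, rather than merely growing through torsion — this is exactly where I expect infinite presentation and property (T) to interact most delicately, and it is the step I would spend the most effort on. A secondary point is to confirm that the resulting stem extension $G$ genuinely retains property (T); and should the needed free rank fail to materialize at the level of $\Gamma$ itself, a natural (though not automatic) fallback would be to pass to a finite-index subgroup, which remains property (T) and infinitely presented, and in which the second homology may acquire the required free part — at the cost of also having to transfer weak ucp-stability to that subgroup.
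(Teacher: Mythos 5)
Your reduction scheme is sound as far as it goes---if you could produce a non-split central extension $1 \to \mathbb{Z} \to G \to \Gamma \to 1$ with $G$ having property (T), then Theorem \ref{thm:main2} would indeed finish the job---but the step you yourself single out as the crux, namely that infinite presentability forces $H_2(\Gamma,\mathbb{Z})$ to have a free quotient, is a genuine gap and is not true at the level of generality you are working in. Finite presentability implies that $H_2(\Gamma,\mathbb{Z})$ is finitely generated, but the converse fails: there are finitely generated, infinitely presented groups whose Schur multiplier is finitely generated, torsion, or even trivial (infinite presentability is the failure of $N$ to be finitely generated as a \emph{normal} subgroup of $F$, which is strictly weaker than the infinite generation of $N^{\mathrm{ab}}$ being visible in $H_2(\Gamma,\mathbb{Z})$). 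Since $\Gamma^{\mathrm{ab}}$ is finite, the $\mathrm{Ext}$ summand of $H^2(\Gamma,\mathbb{Z})$ only yields extensions with infinite abelianization, which, as you correctly note, fail both auxiliary hypotheses of Theorem \ref{thm:main2}; so if $H_2(\Gamma,\mathbb{Z})$ has no free quotient your construction produces nothing, and there is no reason property (T) should supply the missing rank. The same objection applies to your fallback via the coinvariants $M/[\widetilde{\Gamma},M]$, which can likewise be finitely generated even when $M$ is not finitely normally generated, and the finite-index fallback adds the further unresolved problem of transferring weak ucp-stability to a subgroup.

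The paper's proof avoids cohomology entirely and does not route through Theorem \ref{thm:main2}. It uses Shalom's theorem to realize $\Gamma$ as a quotient of a finitely presented property (T) group $\Gamma_0 = \langle s_1,\dots,s_k \mid r_1,\dots,r_m\rangle$, interpolates the quotients $\Gamma_n$ obtained by adding finitely many further relators at a time, and assumes for contradiction that every $\Gamma_n$ is hyperlinear. The canonical unitaries of $L(\Gamma_n)$ inside a matrix ultraproduct then give an approximation of $\Gamma$ satisfying the hypotheses of Proposition \ref{prop:main}; weak ucp-stability produces ucp maps on $C^*(\Gamma)$ close to these, and the dilation argument together with the spectral-gap consequence of property (T) for $\Gamma_0$ forces $\ker(\Gamma_0\to\Gamma) = \ker(\Gamma_0\to\Gamma_n)$ for some $n$, contradicting infinite presentability. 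The non-hyperlinear group produced is one of the intermediate quotients $\Gamma_n$, not a central extension of $\Gamma$. To salvage your approach you would have to either prove the free-quotient statement for infinitely presented property (T) groups, which I do not believe is available, or switch to an argument of this quotient-interpolation type.
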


For every prime $p$, an explicit example of an infinitely presented property
(T) group is $\SL_3(\mathbb F_p[X])$, where $\mathbb F_p[X]$ is the polynomial ring over the finite field $\mathbb F_p$.
Let us also mention that by a result of Gromov (Corollary 5.5.E in \cite{Gromov}), there are uncountably many such groups.
We refer to Section 3.4 in \cite{BDV} for more examples.
Unlike the two previous corollaries, the proof of Theorem \ref{thm:inf_presented} does not utilise cohomological
techniques. Instead, it invokes the theorem of Shalom \cite{Shalom} stating that $\Gamma$ will always be a 
quotient of a  finitely presented property (T) group.

\subsection*{\textbf{Non weakly ucp-stable property (T) groups}}
As mentioned before, Ioana, Spaas and Wiersma \cite{ISW} showed that $\SL_2(\mathbb Z) \ltimes \mathbb Z^2$ is
not  flexibly HS-stable. In fact, their proof shows that $\SL_2(\mathbb Z) \ltimes \mathbb Z^2$ is not
weakly ucp-stable. We claim that examples of groups with property (T) failing weak ucp-stability exist as well.
Indeed, the following proposition
was kindly pointed out to us by Andreas Thom:
\begin{prop} \label{prop:GR_analog}
Let $\Gamma$ be a hyperlinear property (T) group. If $\Gamma$ is weakly ucp-stable, then it is residually finite.
\end{prop}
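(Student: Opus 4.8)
The plan is to argue by contradiction: assuming $\Gamma$ is hyperlinear, has property (T), and is \emph{not} residually finite, I would contradict weak ucp-stability. First, recall that a countable group with property (T) is finitely generated, so Mal'cev's theorem applies to its finite-dimensional linear images. If $\Gamma$ is not residually finite its finite residual is nontrivial, and since the image of the finite residual under any finite-dimensional unitary representation $\rho$ lands in the (trivial) finite residual of the residually finite linear group $\rho(\Gamma)$, there is an element $g_0 \neq 1_\Gamma$ with $\rho(g_0) = 1$ for \emph{every} finite-dimensional unitary representation $\rho$ of $\Gamma$; this $g_0$ is what I would aim to detect. Since $\Gamma$ is hyperlinear it admits a hyperlinear approximation $\varphi_n : \Gamma \to U(d_n)$, and weak ucp-stability provides genuine unitary representations $\pi_n : \Gamma \to U(\widehat{\mathcal H}_n)$ on Hilbert spaces $\widehat{\mathcal H}_n \supseteq \mathcal H_n$ with $\dim \mathcal H_n = d_n$, together with the compression estimate $\Vert \varphi_n(g) - P_n \pi_n(g) P_n \Vert_{2, \tau_{d_n}} \to 0$.

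The heart of the argument is to promote almost-multiplicativity of $\varphi_n$ into an invariance statement to which property (T) can be applied. Combining asymptotic multiplicativity of $\varphi_n$ with the compression estimate, a short computation (multiplying out $P_n\pi_n(g)P_n\pi_n(h)P_n - P_n\pi_n(gh)P_n$, specializing to $h = g^{-1}$, and using $\lvert\tau_{d_n}(X)\rvert \le \Vert X\Vert_{2,\tau_{d_n}}$) shows that $\Vert (1 - P_n)\pi_n(g) P_n \Vert_{2,\tau_{d_n}} \to 0$ for every $g$; that is, $\mathcal H_n$ is asymptotically $\pi_n$-invariant, equivalently $\Vert \pi_n(g) P_n \pi_n(g)^* - P_n \Vert_{2,\tau_{d_n}} \to 0$. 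Viewing $P_n/\sqrt{d_n}$ as a unit vector in the Hilbert space of Hilbert--Schmidt operators on $\widehat{\mathcal H}_n$, this says exactly that these are almost-invariant vectors for the adjoint representations $\mathrm{Ad}\,\pi_n : g \mapsto (S \mapsto \pi_n(g) S \pi_n(g)^*)$. Since a Kazhdan set is finite, the defects converge to $0$ uniformly over it, so property (T) yields, for all large $n$, a genuine $\mathrm{Ad}\,\pi_n$-invariant operator $S_n$ with $\Vert S_n - P_n \Vert_{2, \tau_{d_n}} \to 0$; replacing $S_n$ by its self-adjoint part I may assume $S_n = S_n^*$. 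Invariance means precisely that $S_n$ commutes with $\pi_n(\Gamma)$, and --- crucially --- being Hilbert--Schmidt, $S_n$ is \emph{compact}.

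I would then exploit compactness: every nonzero spectral eigenspace of the compact self-adjoint operator $S_n$ is finite-dimensional and $\pi_n(\Gamma)$-invariant, hence carries a finite-dimensional unitary representation of $\Gamma$. By the choice of $g_0$, it acts trivially on each such eigenspace, so $\pi_n(g_0) S_n = S_n$. Together with $S_n \approx P_n$ this gives $\Vert \pi_n(g_0) P_n - P_n \Vert_{2,\tau_{d_n}} \to 0$, whence $\tau_{d_n}(P_n \pi_n(g_0) P_n) \to 1$. On the other hand the compression estimate and Cauchy--Schwarz give $\tau_{d_n}(P_n \pi_n(g_0) P_n) - \tau_{d_n}(\varphi_n(g_0)) \to 0$, while the hyperlinear approximation forces $\limsup_n \mathrm{Re}\,\tau_{d_n}(\varphi_n(g_0)) \le 0$ (the content of $\liminf_n \Vert \varphi_n(g_0) - 1\Vert_{2,\tau_{d_n}} \ge \sqrt 2$). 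These are incompatible, and the contradiction proves residual finiteness.

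The step I expect to be the main obstacle --- and the conceptual crux --- is the passage through the adjoint representation: recognizing that almost-multiplicativity is equivalent to asymptotic invariance of $\mathcal H_n$, that this makes $P_n$ an almost-invariant vector for $\mathrm{Ad}\,\pi_n$, and that property (T) therefore manufactures a genuine \emph{compact} operator in the commutant of $\pi_n$. This is exactly where property (T) compensates for the weakness of ucp-stability: although the $\pi_n$ may be infinite-dimensional, the invariant vector produced by (T) lives among Hilbert--Schmidt operators and is automatically compact, so its eigenspaces deliver the finite-dimensional representations needed to invoke Mal'cev. The remaining estimates (the commutator computation, the spectral extraction, and the trace comparisons) I expect to be routine.
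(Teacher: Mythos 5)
Your proposal is correct, but it takes a genuinely different route from the paper. The paper's proof is a two-line reduction: it picks a hyperlinear approximation $\varphi_n$ with $\tau_{d_n}(\varphi_n(g))\to 0$ for $g\neq 1$, uses weak ucp-stability together with Lemma \ref{lem:compression_2_ucp} to produce ucp maps $\psi_n\colon C^*(\Gamma)\to M_{d_n}(\mathbb C)$ approximating $\varphi_n$, observes that these witness Kirchberg's factorization property, and then invokes Kirchberg's theorem that property (T) plus the factorization property implies residual finiteness. What you do instead is essentially reprove the relevant case of Kirchberg's theorem from scratch: you work directly with the (possibly infinite-dimensional) dilations $\pi_n$, show that the compression estimate forces $\Vert(1-P_n)\pi_n(g)P_n\Vert_{2,\tau_{d_n}}\to 0$ (note this already follows from unitarity of $\varphi_n(g)$ via $\Vert(1-P_n)\pi_n(g)P_n\Vert_{2,\tau_{d_n}}^2=1-\Vert P_n\pi_n(g)P_n\Vert_{2,\tau_{d_n}}^2$, without even using almost-multiplicativity), feed $P_n/\sqrt{d_n}$ as an almost-invariant vector into $\mathrm{Ad}\,\pi_n$ on Hilbert--Schmidt operators, extract via the quantitative Kazhdan-pair estimate a compact self-adjoint invariant operator close to $P_n$, and use Mal'cev on its finite-dimensional eigenspaces to contradict the separation condition of the hyperlinear approximation. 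All the steps check out (the existence of $g_0$ in the intersection of kernels of all finite-dimensional unitary representations is exactly equivalent, via Mal'cev, to non-residual-finiteness of the finitely generated group $\Gamma$). The trade-off: the paper's argument is shorter and cleanly modular, while yours is self-contained modulo standard facts and makes the mechanism explicit --- in particular it exploits the infinite-dimensional dilations directly rather than first compressing to matrix algebras. The only point you should make precise is the quantitative version of property (T) you invoke (that $(F,\delta)$-invariant unit vectors are $O(\delta/\epsilon)$-close to genuinely invariant vectors for a Kazhdan pair $(F,\epsilon)$); this is standard and available in \cite{BDV}.
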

An observation that appears in \cite{BL}, extending earlier observations of \cite{AP} and \cite{GR}, is
that the combination of hyperlinearity and flexible HS-stability implies residual finiteness. 
Proposition \ref{prop:GR_analog} shows that even though weak ucp-stability is strictly weaker than flexible HS-stability in general,
the observation of Becker and Lubotzky continues to hold for weak ucp-stability, under the assumption of property (T).
To this end, Thom \cite{Thom1} constructed examples of hyperlinear property (T) groups which are not residually
finite (in fact, some are not locally embeddable into amenable groups). By Proposition \ref{prop:GR_analog}, these
groups  will not be weakly ucp-stable.

\paragraph{\textbf{Outline}}
The article is organized as follows: In Section \ref{sec:prelim} we provide preliminaries on 
2-cohomology, property (T), stability, von Neumann algebras and ucp maps.
In Section \ref{sec:main} we prove Theorem \ref{thm:main2}.
In Section \ref{sec:lattices_and_rand_groups} and \ref{sec:inf_presented}, we prove Corollary \ref{cor:Sp}, Proposition \ref{prop:rand} and Theorem \ref{thm:inf_presented}.
Finally, we give the proof of Proposition \ref{prop:GR_analog} in Section \ref{sec:GR_analog}, and end with proving some cohomological lemmas 
in Section \ref{sec:coeff_change}.

\section*{Acknowledgments}
I would like to thank Adrian Ioana for his constant support and stimulating discussions which lead to the proof of
Theorem \ref{thm:ISW_gen}. I would also like to thank my advisors Uri Bader and Alex Lubotzky for their guidance and helpful suggestions. 
I am grateful for various conversations with Michael Chapman, Adam Dor-On, Michael Glasner, Andreas Thom, Itamar Vigdorovich and Geva Yashfe.
This work is part of the PhD thesis of the author, it was supported by the European Research Council (ERC) 
under the European Unions Horizon 2020 research and innovation program (Grant No. 882751).

\section{Preliminaries} \label{sec:prelim}

\subsection{2-cocycles, projective representations and property (T)} \label{prelim:cohom}

For background on group cohomology, we refer to \cite{Brown}. Nevertheless, we recall some definitions and basic facts. 
Given a group $\Gamma$ and an abelian group $A$ endowed with a trivial $\Gamma$ action,
the second cohomology group $H^2(\Gamma, A)$ can be described as follows: Let $Z^2(\Gamma, A)$ be the set of 
\emph{2-cocycles}, i.e. maps $c: \Gamma \times \Gamma \to A$ satisfying the 2-cocycle
identity:  $c(g,h)c(gh, k) = c(h,k)c(g,hk)$ for every $g,h,k\in\Gamma$.
A map $c: \Gamma\times\Gamma \to A$ of the form $c(g,h)=b(g)b(h)b(gh)^{-1}$, for some map 
$b: \Gamma \to A$, is said to be a \emph{2-coboundary}. Denote by $B^2(\Gamma, A)$ the set of 2-coboundaries. 
It is an easy computation to verify that $B^2(\Gamma, A) \subset Z^2(\Gamma, A)$ and that both are abelian groups. 
We can then define $H^2(\Gamma, A)$ to be the quotient $Z^2(\Gamma, A)/B^2(\Gamma,A)$.

The second cohomology group $H^2(\Gamma,A)$ classifies \emph{central}
extensions of the form:
\begin{center}
\begin{tikzcd}
1 \arrow[r] & A \arrow[r,"i"] & G \arrow[r,"q"] & \Gamma \arrow[r] & 1.
\end{tikzcd}
\end{center}

Indeed, given such a central extension, we can choose a (set-theoretic) section $\sigma: \Gamma \to G$ of the quotient map $q$ with $\sigma(1_{\Gamma}) = 1_G$.
For all $g,h \in \Gamma$, we can define $c(g,h) \in A$ to be the unique solution to the
equation $i(c(g,h)) = \sigma(g)\sigma(h)\sigma(gh)^{-1}$. It is readily verified that $c \in Z^2(\Gamma, A)$, and 
that the corresponding cohomology class $[c]$ in $H^2(\Gamma, A)$ is independent of the choice of $\sigma$.
Further, $[c]=0$ if and only if the extension splits (i.e. $\sigma$ can be chosen to be a homomorphism),
in this case, $G \simeq \Gamma \times A$.
Conversely, given a class $[c] \in H^2(\Gamma, A)$, one can always build a central extension that induces it as above.

Denote by $\mathbb T = U(1)$ the circle group. 
A \emph{projective representation} is a map $\pi: \Gamma \to U(\mathcal H)$, for some 
Hilbert space $\mathcal H$, such that for every $g,h \in \Gamma$ there is a scalar
$c(g,h) \in \mathbb T$ with $\pi(g) \pi(h) = c(g,h) \pi(gh)$. 
In this case $c: \Gamma \times \Gamma \to \mathbb T$ is a 2-cocycle, thus, it gives a class in $H^2(\Gamma, \mathbb T)$.
This class will be trivial if and only if there exists a unitary representation $\rho: \Gamma \to U(\mathcal H)$
and a map $b: \Gamma \to \mathbb T$ with $\pi(g) = b(g) \rho(g)$ for all $g \in \Gamma$.
Given a 2-cocycle $c \in Z^2(\Gamma, \mathbb T)$, one can in fact construct a projective representation $\lambda_c$ that
induces $c$ as above.
Indeed, define $\lambda_c: \Gamma \to U(\ell^2(\Gamma))$ to be the \emph{projective left regular representation} given by
$ \lambda_c(g)\delta_h = c(g,h)\delta_{gh}$
where $\delta_h \in \ell^2(\Gamma)$ denotes the dirac function supported on $h$. 
Then $\lambda_c(gh) = c(g,h)\lambda_c(g)\lambda_c(h)$, and $\lambda_c(1_\Gamma) = I$ if an only if $c(1_\Gamma,1_\Gamma)=1$.

For a definition and generalities on property (T), we refer to \cite{BDV}.
The following characterisation of property (T) in terms of projective representations will play a central role in the proof Theorem \ref{thm:ISW_gen}:

\begin{thm}[Lemma 1.1. in Nicoara, Popa and Sasyk \cite{NPS}]\label{thm:NPS}
If $\Gamma$ has property (T), then there exists a finite set $F\subset\Gamma$ and $\epsilon>0$ such that the following holds:

Let  $\pi:\Gamma\rightarrow \mathcal U(\mathcal H)$ be a projective representation with 2-cocycle $c:\Gamma\times\Gamma\rightarrow\mathbb T$. Assume there exists a unit vector $\xi\in\mathcal H$ with $\inf\{\|\pi(g)\xi-\alpha\xi\|\mid \alpha\in\mathbb T\}<\epsilon$, for every $g\in F$. 
Then there is a map $b:\Gamma\rightarrow\mathbb T$ and a vector $\eta\in \mathcal H$ such that  $c(h,k)=b(h)b(k)b(hk)^{-1}$ and $\pi(h)\eta=b(h)\eta$, for every $h,k\in\Gamma$. In particular, $c$ is a 2-coboundary.
\end{thm}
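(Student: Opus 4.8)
The plan is to upgrade the \emph{projective almost-invariant} vector $\xi$ to a genuine \emph{projective invariant} vector by passing to the tensor-square representation, where the cocycle cancels, and then applying property (T) in its standard Kazhdan-constant form. First I would observe that $\pi \otimes \bar\pi : \Gamma \to U(\mathcal H \otimes \bar{\mathcal H})$ is an \emph{honest} unitary representation: since $|c(g,h)| = 1$, the scalars $c(g,h)$ and $\overline{c(g,h)}$ cancel, so $(\pi\otimes\bar\pi)(g)(\pi\otimes\bar\pi)(h) = (\pi\otimes\bar\pi)(gh)$. Under the identification $\mathcal H \otimes \bar{\mathcal H} \cong HS(\mathcal H)$ with the conjugation action $T \mapsto \pi(g)T\pi(g)^*$, the invariant vectors are exactly the Hilbert--Schmidt operators commuting with $\pi(\Gamma)$, and the distinguished vector $\xi \otimes \bar\xi$ corresponds to the rank-one projection $P_\xi$ onto $\mathbb C\xi$.

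Next I would show that $P_\xi$ is almost invariant. Writing $\pi(g)\xi = \alpha_g\xi + v_g$ with $|\alpha_g|=1$ and $\|v_g\| < \epsilon$, a direct expansion of $\pi(g)P_\xi\pi(g)^* - P_\xi$ (the first-order terms in $\alpha_g$ cancel because $\alpha_g\bar\alpha_g = 1$) yields $\|(\pi\otimes\bar\pi)(g)P_\xi - P_\xi\|_{HS} < 2\epsilon + \epsilon^2$ for every $g \in F$. I then \emph{choose} $F$ to be a Kazhdan set with associated Kazhdan constant $\kappa > 0$, and take $\epsilon$ small enough that $2\epsilon + \epsilon^2 < \kappa/2$. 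Decomposing the unit vector $P_\xi \in HS(\mathcal H)$ as $P_\xi = T + (P_\xi - T)$ with $T$ its orthogonal projection onto the invariant subspace, property (T) applied to the complementary part (which has no invariant vectors) gives $\|T - P_\xi\|_{HS} \leq (2\epsilon+\epsilon^2)/\kappa < 1/2$. Hence $T$ is a \emph{nonzero} Hilbert--Schmidt operator commuting with every $\pi(g)$ that is close to the rank-one $P_\xi$.

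Finally I would extract the eigenvector by perturbation theory. The operator $T^*T$ is positive, compact, commutes with $\pi(\Gamma)$, and satisfies $\|T^*T - P_\xi\|_{op} \leq \|T^*T - P_\xi\|_{HS} \leq \delta(2+\delta)$ with $\delta = \|T-P_\xi\|_{HS}$. Since $P_\xi$ has spectrum $\{0,1\}$ with $1$ a \emph{simple} eigenvalue, for $\epsilon$ small the largest eigenvalue of $T^*T$ lies near $1$ and is simple, so its eigenspace is a single line $\mathbb C\eta$. As $T^*T$ commutes with $\pi(\Gamma)$, this line is $\pi$-invariant, giving $\pi(g)\eta = b(g)\eta$ for scalars $b(g) \in \mathbb T$; substituting into $\pi(g)\pi(h) = c(g,h)\pi(gh)$ then forces $c(g,h) = b(g)b(h)b(gh)^{-1}$, so $c$ is a 2-coboundary.

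The main obstacle is precisely this last dimension-one extraction: a nonzero invariant vector for $\pi\otimes\bar\pi$ always exists for free (the identity operator), so it is essential that property (T) produces an invariant $T$ that is genuinely \emph{close to the rank-one} $P_\xi$ -- only then does the top eigenspace of $T^*T$ collapse to a single line. This is exactly what dictates the quantitative choice in the statement: $F$ is taken to be a Kazhdan set and $\epsilon$ small relative to $\kappa$, so that the perturbation of the simple eigenvalue $1$ of $P_\xi$ stays controlled.
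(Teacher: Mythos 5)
The paper does not prove this statement --- it is quoted as Lemma 1.1 of Nicoara--Popa--Sasyk \cite{NPS} and used as a black box --- so there is no internal argument to compare against. Your proof is the standard (and, in essence, the original) one: the cocycle cancels in $\pi\otimes\bar\pi$, the rank-one projection $P_\xi$ onto $\mathbb C\xi$ is $(F,2\epsilon+\epsilon^2)$-almost invariant for the conjugation action on Hilbert--Schmidt operators, property (T) in Kazhdan-constant form produces an invariant vector $T$ (i.e.\ a Hilbert--Schmidt operator commuting with $\pi(\Gamma)$) with $\delta:=\Vert T-P_\xi\Vert_{HS}$ small, and spectral perturbation of the simple eigenvalue $1$ yields a one-dimensional $\pi$-invariant eigenspace, from which $b$ and the coboundary identity follow. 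All the steps check out. One quantitative remark: your choice $2\epsilon+\epsilon^2<\kappa/2$ only guarantees $\delta<1/2$, whence your bound $\Vert T^*T-P_\xi\Vert\le\delta(2+\delta)$ can be as large as $5/4$, which does not yet isolate the top eigenvalue of $T^*T$; you need $\epsilon$ small enough that this quantity drops below $1/2$ (you implicitly allow for this by saying ``for $\epsilon$ small'', so this is bookkeeping, not a gap). Alternatively you can skip $T^*T$ altogether: the invariant subspace of $HS(\mathcal H)$ is closed under adjoints, hence so is its orthogonal complement, and uniqueness of the orthogonal decomposition of the self-adjoint $P_\xi$ forces $T=T^*$; then $T$ itself is compact, self-adjoint, commutes with $\pi(\Gamma)$, and satisfies $\Vert T-P_\xi\Vert_{op}\le\delta<1/2$, so the Riesz-projection argument applies directly to $T$.
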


The theorem strengthens property (T) from the existence of invariant vectors for unitary representations with almost invariant vectors
to the existence of invariant vectors for \emph{projective} unitary representations with almost invariant vectors.
In case an invariant vector exists, the 2-cocycle associated with such a projective representation is in fact a 
2-coboundary. Thus, as explained above, the projective representation comes from a true unitary representation.

\subsection{Stability in terms of generators and relations} \label{prelim:stab_gen_rel}

Given a finitely generated group $\Gamma$, let us fix a presentation $\pi: \mathbb F_S \twoheadrightarrow \Gamma$, where 
$S = \{ g_1, \dots, g_k\}$ is a finite generating set of $\Gamma$ and $\mathbb F_S$ is the free group on $k$ generators. Denote by $\{\bar{s} \vert \; s \in S \}$ the generators of $\mathbb F_S$, so that $\pi(\bar{s}) = s$ for $s \in S$.
We shall usually fix such a presentation whenever dealing with a finitely generated group.
It will be convenient to work with the following consequence of weak ucp-stability (cf. Lemma 3.1 in \cite{Ioana1}):

\begin{lem} \label{lem:ucp_stab_defs}
Given a finitely generated group $\Gamma$ and a presentation as above, if $\Gamma$ is weakly ucp-stable,
then the following holds:
For any $\epsilon > 0$ there exists  $\delta > 0$ and finite subsets $R_0 \subset \ker \pi$, $W_0 \subset \mathbb F_S \setminus \ker \pi$ such that
for any finite dimensional Hilbert space $\mathcal H$ and any map $f: S \to U(\mathcal H)$ satisfying
\begin{align*}
&\Vert r(f(g_1), \dots, f(g_k)) - 1 \Vert_{2, \tau_{\dim \mathcal H}} < \delta \text{ for all $r \in R_0$,} \\
&\text{and } \Vert w(f(g_1), \dots, f(g_k)) - 1 \Vert_{2, \tau_{\dim \mathcal H}} > \sqrt{2} - \delta \text{ for all $w \in W_0$,}
\end{align*}
there exists a homomorphism $\rho: \Gamma \to U(\widehat{ \mathcal H})$, for some Hilbert space $\widehat{ \mathcal H}$ containing
$\mathcal H$ such that:
$$ \Vert f(s) - P \rho(s) P \Vert_{2, \tau_{\dim \mathcal H}} < \epsilon \text{ for all $s \in S$,}$$
where $P: \widehat{\mathcal H} \to \mathcal H$ is the projection onto $\mathcal H$.
\end{lem}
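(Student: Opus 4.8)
The plan is to argue by contradiction through a diagonal (compactness) argument, exactly in the spirit of Lemma 3.1 in \cite{Ioana1}. Suppose the conclusion fails for some fixed $\epsilon > 0$. Since $\mathbb F_S$ is countable, fix enumerations $\ker \pi = \{r_1, r_2, \dots\}$ and $\mathbb F_S \setminus \ker \pi = \{w_1, w_2, \dots\}$. For each $n$, applying the negation of the statement with $\delta = 1/n$, $R_0 = \{r_1, \dots, r_n\}$ and $W_0 = \{w_1, \dots, w_n\}$ produces a finite-dimensional Hilbert space $\mathcal H_n$ and a map $f_n : S \to U(\mathcal H_n)$ satisfying $\Vert r_i(f_n) - 1 \Vert_{2,\tau} < 1/n$ for $i \le n$ and $\Vert w_i(f_n) - 1 \Vert_{2,\tau} > \sqrt 2 - 1/n$ for $i \le n$ (here and below $\tau$ denotes the normalised trace on $B(\mathcal H_n)$), yet admitting \emph{no} homomorphism $\rho : \Gamma \to U(\widehat{\mathcal H})$ with $\Vert f_n(s) - P\rho(s)P \Vert_{2,\tau} < \epsilon$ for all $s \in S$.

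Next I would promote each $f_n$ to a map $\varphi_n : \Gamma \to U(\mathcal H_n)$ defined on the whole group. To this end, fix once and for all a set-theoretic lift $g \mapsto w_g \in \mathbb F_S$ of $\pi$, normalised so that $w_{1_\Gamma}$ is the empty word and $w_{g_i} = \bar{g_i}$ for each generator $g_i \in S$, and set $\varphi_n(g) := w_g(f_n(g_1), \dots, f_n(g_k))$. Using that $w \mapsto w(f_n)$ is the unique homomorphism $\mathbb F_S \to U(\mathcal H_n)$ extending $f_n$, and that $\Vert \cdot \Vert_{2,\tau}$ is invariant under left and right multiplication by unitaries, one computes for any $g,h \in \Gamma$ that $\Vert \varphi_n(g)\varphi_n(h) - \varphi_n(gh) \Vert_{2,\tau} = \Vert r(f_n) - 1 \Vert_{2,\tau}$, where $r = w_g w_h w_{gh}^{-1} \in \ker \pi$; and similarly, for $1 \neq g \in \Gamma$, one has $\Vert \varphi_n(g) - 1 \Vert_{2,\tau} = \Vert w_g(f_n) - 1 \Vert_{2,\tau}$ with $w_g \in \mathbb F_S \setminus \ker \pi$. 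Since each such fixed word $r$ (respectively $w_g$) eventually appears in $R_0$ (respectively $W_0$) as $n$ grows, the defining inequalities force $\Vert \varphi_n(g)\varphi_n(h) - \varphi_n(gh) \Vert_{2,\tau} \to 0$ and $\liminf_n \Vert \varphi_n(g) - 1 \Vert_{2,\tau} \ge \sqrt 2$. Thus $(\varphi_n)$ is a hyperlinear approximation of $\Gamma$ in the sense of Definition \ref{def:hyp}.

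Finally I would feed $(\varphi_n)$ into weak ucp-stability (Definition \ref{def:weak_ucp_stab}) to obtain homomorphisms $\pi_n : \Gamma \to U(\widehat{\mathcal H}_n)$ with $\widehat{\mathcal H}_n \supseteq \mathcal H_n$ and $\Vert \varphi_n(g) - P_n \pi_n(g) P_n \Vert_{2,\tau} \to 0$ for every $g \in \Gamma$. Applying this to each of the finitely many generators $g_i \in S$, and recalling that $\varphi_n(g_i) = f_n(g_i)$ by our normalisation, yields $\Vert f_n(g_i) - P_n \pi_n(g_i) P_n \Vert_{2,\tau} \to 0$. Hence for all sufficiently large $n$ the homomorphism $\rho = \pi_n$ satisfies $\Vert f_n(s) - P\rho(s)P \Vert_{2,\tau} < \epsilon$ for every $s \in S$, contradicting the choice of $f_n$. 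This contradiction proves the lemma.

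I expect the only genuinely delicate point to be bookkeeping rather than analysis: one must choose the lift $g \mapsto w_g$ so that generators map to single letters, which is precisely what lets the conclusion of weak ucp-stability transfer back to a statement about $f_n$ on $S$, and one must ensure the diagonal enumeration exhausts both $\ker \pi$ and $\mathbb F_S \setminus \ker \pi$ so that the approximate-relation and bounded-away conditions propagate to all of $\Gamma$ in the limit. The reduction of word differences to expressions of the form $\Vert r(f_n) - 1 \Vert_{2,\tau}$ via bi-invariance of $\Vert \cdot \Vert_{2,\tau}$ is routine, but it is exactly what makes the finite relation data control the multiplicative defect of $\varphi_n$.
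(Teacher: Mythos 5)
Your proposal is correct and follows essentially the same route as the paper's proof: negate the statement, diagonalize over exhausting finite subsets of $\ker\pi$ and $\mathbb F_S\setminus\ker\pi$ with $\delta=1/n$, promote the resulting maps $f_n$ to a hyperlinear approximation $\varphi_n$ of $\Gamma$ via a section of $\pi$ normalised on generators, and then contradict weak ucp-stability. The only difference is that you spell out the bi-invariance computation reducing the multiplicative defect of $\varphi_n$ to $\Vert r(f_n)-1\Vert_{2,\tau}$, which the paper leaves as "readily verified."
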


Given a map $f: S \to H$ for some group $H$, abusing notation, it is useful to extend it to a homomorphism defined on $\mathbb F_S$ by $f(r) = r(f(g_1), \dots, f(g_k))$ for $r \in \mathbb F_S$.

\begin{proof}
Let $R_n\subset \ker{\pi}, \;W_n \subset \mathbb F_S \setminus \ker \pi$ be increasing
sequences of finite sets with $\cup_nR_n=\ker(\pi)$, $\cup_n W_n=\mathbb F_S \setminus \ker \pi$. 
Let $p:\Gamma\rightarrow\mathbb F_S$ be a section of the quotient map $\pi$, so that $p(s)=\bar{s}$ for all $s\in S$.
Assume the condition prescribed in the Lemma fails, then there exists $\epsilon>0$ and maps $f_n: S\rightarrow U(\mathcal H_n)$,
with $\dim \mathcal H_n < \infty$, such that for all $n\in\mathbb N$:
$$\max\{\Vert f_n(r) - 1 \Vert_{2, \tau_{\dim \mathcal H}}\vert\; r\in R_n\} < \frac{1}{n}, \;\text{ }
 \min\{\Vert f_n(w) - 1 \Vert_{2, \tau_{\dim \mathcal H}}\vert\; w\in W_n\} > \sqrt{2} - \frac{1}{n}$$
and $\max\{\Vert f_n(s) - P_n\rho_n(s)P_n \Vert_{2, \tau_{\dim \mathcal H}}\vert \;  s \in S\}\geq\epsilon$ 
for any homomorphism $\rho_n:\Gamma\rightarrow U(\widehat{\mathcal H}_n)$, 
where $\widehat{\mathcal H}_n$ is a Hilbert space containing $\mathcal H_n$ and $P_n: \widehat{\mathcal{H}}_n \rightarrow \mathcal H_n$ the corresponding projection.
Define $\varphi_n:\Gamma\rightarrow U(\mathcal H_n)$ by  $\varphi_n(g)= f_n(p(g))$. It is then readily verified
that $\varphi_n$ is a hyperlinear approximation.
For all $n \in \mathbb N$, we have $\varphi_n(s)=f_n(p(s))=f_n(\bar s)=f_n(s)$ for every $s\in S$. Thus, we get that
$\max\{\Vert \varphi_n(s) - P_n\rho_n(s)P_n \Vert_{2, \tau_{\dim \mathcal H}}\vert\;  s \in S\}\geq\epsilon$, 
for all $\widehat{\mathcal{H}}_n$, $P_n$ and homomorphism $\rho_n:\Gamma\rightarrow U(\widehat{\mathcal H}_n)$ 
as above. This contradicts weak ucp-stability.
\end{proof}

The following is a folklore stability-type statement saying that a matrix which is close to
being unitary is in fact close to a true unitary (see for example  \cite{AD} for a proof).

\begin{lem}[Lemma 2.2. in \cite{AD} ] \label{lem:unitary_stab}
Let $M \in M_n(\mathbb{C})$. Then there exists a unitary $R \in U(n)$ such that $\Vert M - R \Vert_{2,\tau_n} \leq \Vert M^* M - 1 \Vert_{2,\tau_n}$.
\end{lem}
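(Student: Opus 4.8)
The plan is to use the polar decomposition of $M$ together with the unitary invariance of the Hilbert-Schmidt norm, reducing the desired matrix inequality to an elementary scalar comparison of singular values. First I would write the polar decomposition $M = R\vert M\vert$, where $\vert M\vert = (M^*M)^{1/2}$ is the positive square root and $R \in U(n)$ is unitary. In finite dimensions such a genuine unitary $R$ exists even when $M$ is singular: one takes $R$ to be the partial isometry carrying $\operatorname{range}\vert M\vert$ onto $\operatorname{range} M$, and extends it to a unitary using that $\ker M$ and $(\operatorname{range} M)^{\perp}$ have the same dimension.

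Next I would exploit unitary invariance. Since $M - R = R\vert M\vert - R = R(\vert M\vert - 1)$ and the normalized Hilbert-Schmidt norm satisfies $\Vert RX \Vert_{2,\tau_n} = \Vert X \Vert_{2,\tau_n}$ for every unitary $R$, this gives $\Vert M - R \Vert_{2,\tau_n} = \Vert \vert M\vert - 1 \Vert_{2,\tau_n}$. On the other hand $M^*M - 1 = \vert M\vert^2 - 1$, so it remains only to prove the inequality $\Vert \vert M\vert - 1 \Vert_{2,\tau_n} \leq \Vert \vert M\vert^2 - 1 \Vert_{2,\tau_n}$ between two self-adjoint matrices.

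For the final step I would diagonalize the positive matrix $\vert M\vert$ and let $s_1, \dots, s_n \geq 0$ be its eigenvalues (the singular values of $M$). Then $\Vert \vert M\vert - 1 \Vert_{2,\tau_n}^2 = \frac{1}{n}\sum_i (s_i - 1)^2$ and $\Vert \vert M\vert^2 - 1 \Vert_{2,\tau_n}^2 = \frac{1}{n}\sum_i (s_i^2 - 1)^2 = \frac{1}{n}\sum_i (s_i - 1)^2(s_i + 1)^2$. Since $s_i \geq 0$ forces $(s_i + 1)^2 \geq 1$, each summand on the right dominates the matching summand on the left, so the inequality holds term by term. Taking $R$ to be the unitary from the polar decomposition then yields $\Vert M - R \Vert_{2,\tau_n} \leq \Vert M^*M - 1 \Vert_{2,\tau_n}$.

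I do not expect a genuine obstacle here, as the whole argument is routine; if one point requires care, it is the existence of an honest unitary (rather than merely a partial isometry) in the polar decomposition when $M$ is not invertible, which is precisely where finite-dimensionality enters.
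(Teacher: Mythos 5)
Your argument is correct and is essentially the standard proof that the paper points to in \cite{AD}: polar decomposition $M = R\vert M\vert$, unitary invariance of $\Vert\cdot\Vert_{2,\tau_n}$, and the scalar inequality $\vert s-1\vert \leq \vert s^2-1\vert$ for $s\geq 0$ applied to the singular values. The one point you flag — extending the partial isometry to a genuine unitary when $M$ is singular — is handled exactly as you describe, since $\dim\ker M = \dim(\operatorname{range} M)^{\perp}$ in finite dimensions.
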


\subsection{von Neumann algebras and twisted group algebras} \label{prelim:von_neum}

In this article, we will mostly be dealing with tracial von Neumann algebras (and on one occasion, also 
semifinite traces). We refer to \cite{Popa}, \cite{Takesaki} for the general
theory. However, for the reader's convenience, we will remind the basic objects.

For a Hilbert space $\mathcal H$, let us denote by $B(\mathcal H)$ the algebra of all bounded linear operators on
$\mathcal H$. For $x \in B(\mathcal H)$, we will denote by $\Vert x \Vert$ its operator norm.
A \emph{$C^*$-algebra} is a $*$-closed subalgebra of $B(\mathcal H)$, for some Hilbert space $\mathcal H$, which is closed in the operator norm induced topology.
A \emph{von Neumann algebra} $\mathcal M$ is a unital $*$-closed subalgebra of $B(\mathcal{H})$, for some Hilbert
space $\mathcal H$, which is closed in the weak operator topology. Note that every von Neumann algebra is a C$^*$-algebra.
By von Neumann's double commutant theorem, a unital $*$-closed subalgebra $\mathcal M \subset \mathcal B(\mathcal H)$ is a 
von Neumann algebra if and only if $\mathcal M'' = \mathcal M$, where $\mathcal M'$ denotes the collection
of operators in $B(\mathcal H)$ which commute with $\mathcal M$. If $\mathcal H$ is separable, we will say $\mathcal M$ is separable.
A \emph{state} on $\mathcal M$ is a normalized positive linear functional $\varphi : \mathcal M \to \mathbb C$, meaning
$\varphi(x^*x) \geq 0$ for all $x \in \mathcal M$ and $\varphi(1_\mathcal M)=1$. 
A state $\varphi$ on $\mathcal M$ is said to be 
\begin{itemize}
    \item \emph{Faithful} if $\varphi(x^*x) = 0 \implies x = 0$ for all $x \in \mathcal M$.
    \item \emph{Normal} if it is continuous on the unit ball of $\mathcal M$, equipped with the
          weak operator topology.
    \item \emph{Tracial} if $\varphi(xy) = \varphi(yx)$ for all $x,y \in \mathcal M$.
\end{itemize}

A \emph{trace} $\tau$ on $\mathcal M$ is a state that satisfies all three properties listed. In this case,
the pair $(\mathcal M, \tau)$ is called a \emph{tracial von Neumann algebra}. 
A canonical example is $(M_m(\mathbb C), \tau_m)$, where $\tau_m$ the usual normalized trace on $M_m(\mathbb C)$.
The following example will be central in our constructions:
For a countable group $\Gamma$ and a 2-cocycle $c: \Gamma \times \Gamma \to \mathbb T$, we can form the 
\textbf{\emph{twisted group von Neumann algebra}} $L_c(\Gamma)$ by taking the weak operator topology closure of 
$\spam \{ \lambda_c(g) \vert g \in \Gamma \}$ in $B(\ell^2(\Gamma))$, $\lambda_c$ being the projective
left regular representation defined before. In this way, $L_c(\Gamma)$ becomes a separable tracial
von Neumann algebra with the regular trace $\tau(x) = \langle x \delta_{1_\Gamma}, \delta_{1_{\Gamma}} \rangle$.
We shall use from now on the alternative notation $u_g = \lambda_c(g)$ for the canonical unitaries defined by the
group in the algebra. Note that $\tau(u_g) = 0$ for all $1 \neq g \in \Gamma$.
For the trivial cocycle, we get the \textbf{\emph{group von Neumann algebra}} $L(\Gamma)$.

Given a tracial von Neumann algebra $(\mathcal M, \tau)$, we denote the associated inner product by $\langle x, y \rangle_\tau = \tau(y^*x)$, 
and the associated $2$-norm by  $\Vert x \Vert_{2,\tau} = \tau(x^*x)^{\frac{1}{2}}$. 
Define $L^2(\mathcal M, \tau)$ to be the Hilbert space completion of $\mathcal M$ in this inner product.
For $x \in \mathcal M$, we denote by $\widehat{x}$ the corresponding vector in $L^2(\mathcal M, \tau)$.
In this case, $L^2(\mathcal M, \tau)$ becomes a bimodule over $\mathcal M$. Explicitly, we can represent 
$\mathcal M, \mathcal M^{\text{op}}$ in $B(L^2(\mathcal M, \tau))$ by letting them act as follows: 
$x \cdot \widehat{y} \cdot z = \widehat{xyz}$ for $x,y,z \in \mathcal M$. This is called the \emph{standard representation of $\mathcal M$}.

Given a sequence of tracial von Neumann algebras $(\mathcal M_m, \tau_m)$, we can define a direct product by:
$$ \prod_m \mathcal M_m = \{ (x_m) \vert \; x_m \in \mathcal M_m, \; \sup_m \Vert x_m \Vert < \infty \},$$
that is, the direct product consists of all \emph{operator-norm} bounded sequences valued in $\mathcal M_m$.
The direct product will typically not be a tracial von Neumann algebra. To remedy this, we 
fix a nonprincipal ultrafilter $\mathcal U$ on $\mathbb N$, so that every bounded sequence 
$(\alpha_i)$ in $\mathbb C$ has an ultralimit, denoted by $\lim_{i \to \mathcal U} \alpha_i$ (see \cite{Pestov}).
Define the \textbf{\emph{tracial ultraproduct}} to be:

$$ \prod_{m \to \mathcal U} \mathcal M_m = \faktor{\prod_{m \in \mathbb N} \mathcal M_m}{\mathcal I},$$

where $\mathcal I = \{ (x_m)_m \vert \lim_{m \to \mathcal U} \Vert x_m \Vert_{2, \tau_m} = 0 \}$ consists of
the asymptotically \emph{2-norm} trivial sequences.
For an element $x \in \prod_{m \to \mathcal U} \mathcal M_m$, we will denote by $\{ x_m \}_m$ a choice of 
a representative sequence for $x$.
By a theorem of Sakai, the tracial ultraproduct is indeed tracial von Neumann algebra with the trace defined by
$\tau_{\mathcal U}(\{ x_m \}_m) = \lim_{m \to \mathcal U} \tau_m(x_m)$.

A separable tracial von Neumann algebra $(\mathcal M, \tau)$ is called \textbf{\emph{Connes embeddable}} if 
there exists a trace preserving normal $*$-homomorphism into an ultraproduct of matrix algebras 
$\prod_{m \to \mathcal U} M_m(\mathbb C)$. The choice of the ultrafilter does not effect the definition (see
Corollary 3.2 in \cite{HS_C*}). In this case, we can think of $\mathcal M$
as a (von Neumann) subalgebra of $\prod_{m \to \mathcal U} M_m(\mathbb C)$ with $\tau = \tau_\mathcal U \vert_{\mathcal M}$.
For more on Connes embeddable algebras, we refer to \cite{Ozawa}, \cite{Olesen}, \cite{CL}.

Radulescu \cite{Radulescu} proved that a countable group $\Gamma$ is hyperlinear if and only if $L(\Gamma)$ is Connes embeddable.
Further, Thom \cite{Thom1} gave the following criterion for Connes embeddability of twisted group algebras 
with cocycles coming from a fixed central extension:

\begin{lem}[Lemma 3.4 in Thom \cite{Thom1}] \label{lem:Thom}
Let $A$ be a countable abelian group, $\Gamma$ a countable group. Consider a central extension of the form
\begin{center}
\begin{tikzcd}
1 \arrow[r] & A \arrow[r] & G \arrow[r] & \Gamma \arrow[r] & 1
\end{tikzcd}
\end{center}
and a cocycle $ c  \in Z^2(\Gamma, A)$ representing this extension. Then $G$ is hyperlinear if
and only if for every character $\chi \in \widehat{A}$ the twisted group von Neumann algebra 
$L_{\chi \circ  c }(\Gamma)$ is Connes Embeddable.
\end{lem}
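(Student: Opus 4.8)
The plan is to reduce everything to Connes embeddability via Radulescu's theorem \cite{Radulescu} and then exploit the fact that the central copy of $A$ in $G$ generates, inside $L(G)$, an abelian subalgebra $L(A)\cong L^\infty(\widehat A,\mu)$ lying in the \emph{center}, where $\mu$ is the Haar probability measure on the compact dual $\widehat A$. Disintegrating $L(G)$ over this central subalgebra exhibits it as the direct integral $\int^{\oplus}_{\widehat A} L_{\chi\circ c}(\Gamma)\,d\mu(\chi)$: fixing a section $\sigma$ with cocycle $c$, the canonical unitaries $u_{\sigma(g)}$ commute with $L(A)$ (since $A$ is central, $\lambda_G$ restricts to a genuine representation in which $\sigma(g)$ and $\iota(a)$ commute), and on the fiber over $\chi$ the relation $u_{\sigma(g)}u_{\sigma(h)}=u_{\iota(c(g,h))}u_{\sigma(gh)}$ becomes $u_gu_h=\chi(c(g,h))u_{gh}$. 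Thus the task is to show that $L(G)$ is Connes embeddable if and only if every fiber $L_{\chi\circ c}(\Gamma)$ is, and both directions will hinge on the fact that $\mu$ has full support.

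For the (easier) converse direction I would argue with finite-dimensional microstates directly. Given a finite $F\subset G$, involving finitely many $a\in A$, and $\epsilon>0$, I choose characters $\chi_1,\dots,\chi_N\in\widehat A$ whose empirical measure $\frac1N\sum_i\delta_{\chi_i}$ approximates $\mu$ well enough that $\frac1N\sum_i\chi_i(a)$ is within $\epsilon$ of $\delta_{a,1_A}$ for the relevant $a$. Connes embeddability of each $L_{\chi_i\circ c}(\Gamma)$ provides finite-dimensional $(\chi_i\circ c)$-projective microstates $g\mapsto V^{(i)}_g$, which after amplification I may take in a common dimension $m$. Assembling $\Psi(\iota(a)\sigma(g))=\bigoplus_{i=1}^N\chi_i(a)V^{(i)}_g$ on $\bigoplus_{i=1}^N\mathbb C^m$ then yields an approximate representation of $G$ on $F$: the block-diagonal structure matches the cocycle phases, while the computation $\tau(\Psi(\iota(a)\sigma(g)))=\frac1N\sum_i\chi_i(a)\tau_m(V^{(i)}_g)$ reproduces $\tau_{L(G)}(u_{\iota(a)\sigma(g)})=\delta_{a,1_A}\,\delta_{g,1_\Gamma}$ up to $\epsilon$, using the sampling for the $g=1_\Gamma$ case and faithfulness of the fiber traces for $g\neq1_\Gamma$. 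This is exactly a hyperlinear approximation of $G$, so $G$ is hyperlinear.

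For the forward direction, assume $G$ is hyperlinear, so there is a trace-preserving embedding $L(G)\hookrightarrow\prod_{m\to\mathcal U}M_m(\mathbb C)$. Inside the ultraproduct we still have the central $L(A)=L^\infty(\widehat A,\mu)$, and for any open $O\subset\widehat A$ the spectral projection $q_O$ has trace $\mu(O)>0$ and commutes with every $u_{\sigma(g)}$. Hence $w_g:=u_{\sigma(g)}q_O$ is a unitary in the corner $q_O\big(\prod_{m\to\mathcal U}M_m(\mathbb C)\big)q_O$, which is itself an ultraproduct of matrix algebras and therefore Connes embeddable. These unitaries satisfy $w_gw_h=(u_{\iota(c(g,h))}q_O)\,w_{gh}$, and since $\chi'\mapsto\chi'(c(g,h))$ varies by less than any prescribed $\delta$ over a small enough $O\ni\chi$, the operator $u_{\iota(c(g,h))}q_O$ lies within $\delta$ of the scalar $\chi(c(g,h))q_O$; moreover $\tau_{q_O}(w_g)=\tau(u_{\sigma(g)}q_O)/\mu(O)=\delta_{g,1_\Gamma}$ exactly, matching the regular trace of $L_{\chi\circ c}(\Gamma)$. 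Letting $O$ shrink to $\{\chi\}$ (passing to a further ultraproduct of the corners if one wants an honest $*$-homomorphism) produces a trace-preserving embedding of $L_{\chi\circ c}(\Gamma)$ into a Connes embeddable algebra, so $L_{\chi\circ c}(\Gamma)$ is Connes embeddable. The crucial point, which I expect to be the main obstacle, is that this must work for \emph{every} character $\chi$ rather than merely $\mu$-almost every one: it is precisely the full support of Haar measure that guarantees $\mu(O)>0$ for all neighborhoods $O$, so that the spectral compression can probe an arbitrarily small neighborhood of any prescribed $\chi$. Making the shrinking-$O$ limit rigorous — either through a continuity argument showing that the set of $\chi$ with $L_{\chi\circ c}(\Gamma)$ Connes embeddable is closed (microstates perturb continuously in $\chi$), or through an iterated ultraproduct — is where the care is needed.
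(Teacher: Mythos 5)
Your statement is not proved in the paper at all: it is imported verbatim as Lemma 3.4 of Thom \cite{Thom1}, so the relevant comparison is with Thom's argument rather than with anything in this article. Both you and Thom start from Radulescu's theorem and the fact that $L(A)\cong L^\infty(\widehat A,\mu)$ sits in the center of $L(G)$, with $L(G)$ disintegrating over $\widehat A$ into the fibers $L_{\chi\circ c}(\Gamma)$; the genuine difficulty, which you correctly isolate, is upgrading from ``$\mu$-almost every $\chi$'' (all that the direct integral yields) to ``every $\chi$''. Thom does this by an algebraic device: the set of characters $\chi$ with $L_{\chi\circ c}(\Gamma)$ Connes embeddable is a subgroup of $\widehat A$ (since $u_g\otimes u_g$ inside $L_{\chi\circ c}(\Gamma)\,\bar{\otimes}\,L_{\eta\circ c}(\Gamma)$ generates a trace-preserving copy of $L_{(\chi\eta)\circ c}(\Gamma)$, and embeddability passes to tensor products and subalgebras), and a measurable subgroup of full Haar measure must be everything. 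Your route replaces this with a direct compression: cutting by the central spectral projection $q_O$ of a small open $O\ni\chi$, which has positive trace for \emph{every} $\chi$ precisely because Haar measure has full support, produces unitaries in an embeddable corner satisfying the $\chi\circ c$ relations up to $\sup_{\chi'\in O}\vert\chi'(c(g,h))-\chi(c(g,h))\vert$ in operator norm and with exactly the correct trace on all words, and an ultraproduct over shrinking $O$ then exhibits $L_{\chi\circ c}(\Gamma)$ trace-preservingly inside an ultraproduct of Connes embeddable algebras. This is a correct and arguably more self-contained alternative; likewise your converse direction, assembling block-diagonal microstates over a finite sample of characters whose empirical averages $\frac1N\sum_i\chi_i(a)$ approximate $\delta_{a,1_A}$, is a hands-on substitute for the easy direction of the direct integral statement.

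Two points deserve explicit care. First, your parenthetical alternative for the limiting step --- that the set of embeddable $\chi$ is closed because ``microstates perturb continuously'' --- is true here, but only because Connes embeddability of $L_{\chi\circ c}(\Gamma)$ is equivalent to a microstate condition relative to the \emph{canonical} trace (determined by the relations together with $\tau(u_g)=\delta_{g,1_\Gamma}$), which depends continuously on $\chi$; for a general measurable field ``continuity of microstates'' only shows the embeddable set is a $G_\delta$. The iterated-ultraproduct formulation is the safer one and does work, since every separable subalgebra of an ultraproduct of Connes embeddable tracial von Neumann algebras is Connes embeddable. Second, in the converse direction you should record why the samples $\chi_1,\dots,\chi_N$ exist (e.g.\ i.i.d.\ Haar samples suffice, as $\int_{\widehat A}\chi(a)\,d\mu(\chi)=0$ for $a\neq 1_A$) and that a common block size is obtained by tensoring each microstate with an identity matrix, which changes neither the normalized trace nor the multiplicative defect.
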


To end this section, we briefly recall the infinite dimensional amplification of a tracial von Neumann algebra:
Let $\mathcal M \subset B(\mathcal H)$ be a von Neumann algebra and let $\mathcal K$ be some Hilbert space. 
Define the tensor product $\mathcal N =\mathcal M \bar{\otimes} B(\mathcal K)$ to be 
the von Neumann algebra generated by the collection of operators $\{x \otimes y \vert x \in \mathcal M, y \in B(\mathcal K)\} \subset B(\mathcal H \otimes \mathcal K)$,
where $\mathcal H \otimes \mathcal K$ denotes the Hilbert space tensor product. 
This algebra can be viewed as block matrices acting on $\mathcal H \otimes \mathcal K$ with entries in $\mathcal M$
(see Section 5.1. in \cite{Popa}).

Let us denote by $\Tr$ the usual faithful normal \emph{semifinite} trace on $B(\mathcal K)$ 
(See Section 8.3. in \cite{Popa} for a definition of semifinite traces on von Neumann algebras).
If $(\mathcal M, \tau)$ is a tracial von Neumann algebra, then there exists a unique
faithful normal semifinite trace $\hat{\tau} = \tau \otimes \Tr$ on $\mathcal N$ that satisfies 
$\hat\tau(x \otimes y) = \tau(x) \cdot \Tr(y)$ for all $0 \leq x \in \mathcal M, 0\leq y \in B(\mathcal K)$.
In this situation (see \cite{PX}), one can define $L^2(\mathcal N, \hat \tau)$ to be the Hilbert space completion of the subspace $\mathcal S = \{x \in \mathcal N \vert \;\hat{\tau}(x^*x) < \infty \} \subset \mathcal N$,
equipped with the 2-norm $\Vert x \Vert_{2,\hat{\tau}} = \hat{\tau}(x^*x)^{\frac{1}{2}}$. 
Similarly to before, for $x \in \mathcal S$ we denote by $\widehat{x}$ the corresponding vector in $L^2(\mathcal N, \hat{\tau})$.
Further, $L^2(\mathcal N, \hat{\tau})$ comes equipped with a $\mathcal N$-bimodule structure given by $x \cdot \widehat{y} \cdot z = \widehat{xyz}$ for all $x,z
\in \mathcal N, y \in \mathcal S$.

\subsection{Completely positive maps and a dilation theorem} \label{prelim:ucp}

Next, we recall the basic definition of a unital completely positive map (abbreviated ucp map). We refer to
the Chapter 3 in the book \cite{Paulsen} for more on this topic.

Let $A$ and $B$ be unital C$^*$-algebras. Denote by $A^+ = \{ x^*x \vert x \in A\}$ the cone of positive elements 
in $A$, and by $U(A)$ the group of unitaries in $A$. Recall that for every $n \in \mathbb N$,
the matrix algebras $M_n(A) \simeq A \otimes M_n(\mathbb C)$ are also C$^*$-algebras.
Given a linear map $\varphi: A \to B$, we can consider the amplifications $\varphi^{(n)}: M_n(A) \to M_n(B)$ given by
$\varphi^{(n)}(\left[ a_{ij}\right]) = \left[\varphi(a_{ij}) \right]$.
If for all $n$ one has $\varphi^{(n)}(M_n(A)^+)\subset M_n(B)^+$ and $\varphi(1)=1$, then $\varphi$ is called a
\textbf{\emph{ucp map}}. Such maps are automatically contractive and $*$-preserving. A typical example of a ucp map is the compression of a $*$-representation: 
If $\pi: A \to B(\mathcal H)$ is a $*$-homomorphism, for some Hilbert space $\mathcal H$, and $P \in B(\mathcal H)$ is a projection,
then the map $\varphi: A \to B(P\cdot \mathcal H)$ defined by $\varphi(x) = P\pi(x)P$ is a ucp map. 

Let us now restrict to \emph{group $C^*$-algebras}: The \emph{\textbf{group $C^*$-algebra}}
$C^*(\Gamma)$ of a group $\Gamma$ is a unital C$^*$-algebra together an homomorphism
$g \in \Gamma \mapsto v_g \in U(C^*(\Gamma))$ that satisfies the following universal property:
For any unital C$^*$-algebra $B$ and any homomorphism $\pi: \Gamma \to U(B)$ there exists a unique $*$-homomorphism $\Pi: C^*(\Gamma) \to B$
with $\Pi(v_g) = \pi(g)$ for every $g \in \Gamma$. The group C$^*$-algebra is well defined up to isomorphism.
The following is an immediate consequence of this universal property and the above discussion:

\begin{lem} \label{lem:compression_2_ucp}
Let $\Gamma$ be a group and $\mathcal H$ be a Hilbert space. Let $\psi: \Gamma \to B(\mathcal H)$ be a map of the form
$\psi(g) = P \pi(g)P$, where $\pi: \Gamma \to U(\mathcal H)$ is a representation and $P \in B(\mathcal H)$ is a projection.
Then there is a ucp map $\Psi: C^*(\Gamma) \to B(P\cdot \mathcal H)$ such that $\Psi(v_g) = \psi(g)$ for all $g \in \Gamma$.
\footnote{Restriction of ucp maps $\Psi:C^*(\Gamma) \to B(\mathcal H)$ to $\Gamma$ are called \emph{operator-valued positive definite functions}.
One could formulate weak ucp-stability in these terms.}
\end{lem}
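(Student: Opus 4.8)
The plan is to factor the given map through the $*$-homomorphism out of $C^*(\Gamma)$ that the universal property supplies, and then to recognise $\Psi$ as the compression of that homomorphism by $P$, which is ucp by the example recorded just before the statement. First I would apply the universal property of $C^*(\Gamma)$ to the representation $\pi: \Gamma \to U(\mathcal H)$, viewed as a homomorphism into the unitary group $U(B(\mathcal H))$ of the $C^*$-algebra $B(\mathcal H)$. This furnishes a unique $*$-homomorphism $\Pi: C^*(\Gamma) \to B(\mathcal H)$ with $\Pi(v_g) = \pi(g)$ for every $g \in \Gamma$; being a unital $*$-homomorphism of $C^*$-algebras, $\Pi$ carries $1_{C^*(\Gamma)}$ to $1_{B(\mathcal H)}$.

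Next I would define $\Psi: C^*(\Gamma) \to B(P\cdot \mathcal H)$ by $\Psi(x) = P\Pi(x)P$, interpreting $P\Pi(x)P$ as an operator on the subspace $P\cdot\mathcal H$. By the discussion preceding the statement, the compression of a $*$-representation by a projection is a ucp map, so $\Psi$ is ucp: unitality follows from $\Psi(1_{C^*(\Gamma)}) = P\Pi(1_{C^*(\Gamma)})P = P$, which is precisely the identity of $B(P\cdot\mathcal H)$, while complete positivity follows because, for each $n$, the amplification $\Psi^{(n)}$ equals the conjugation $(P\otimes I_n)\,\Pi^{(n)}(\cdot)\,(P\otimes I_n)$ of the positive-preserving map $\Pi^{(n)}$ by a projection, hence is itself positive-preserving.

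Finally I would verify the interpolation property directly: for each $g \in \Gamma$ one computes $\Psi(v_g) = P\Pi(v_g)P = P\pi(g)P = \psi(g)$, as required. Since each step is a direct appeal to a fact already in place, there is no genuine obstacle here; the one point that warrants a moment's care is to read the codomain as $B(P\cdot\mathcal H)$ rather than $B(\mathcal H)$, so that $\Psi(1_{C^*(\Gamma)}) = P$ is the identity of the codomain and $\Psi$ comes out unital, and correspondingly to interpret the equality $\Psi(v_g) = \psi(g)$ via the restriction of $P\pi(g)P$ to $P\cdot\mathcal H$.
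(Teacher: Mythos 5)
Your argument is correct and is exactly what the paper intends: the lemma is stated there as ``an immediate consequence'' of the universal property of $C^*(\Gamma)$ together with the preceding observation that compressing a $*$-representation by a projection yields a ucp map, which is precisely the factorization $\Psi = P\,\Pi(\cdot)\,P$ you carry out. Your extra care about reading the codomain as $B(P\cdot\mathcal H)$ so that $\Psi$ is unital is a sensible point but does not change the argument.
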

Conversely, Stinespring's dilation theorem (Chapter 4 in \cite{Paulsen}) implies that any ucp map $\Psi: C^*(\Gamma) \to B(\mathcal K)$, for a Hilbert space $\mathcal K$, is of this form. This explains the term ucp appearing in weak ucp-stability. 

The following structural result for ucp maps with values in a tracial von Neumann algebra will be crucial in the proof of Theorem \ref{thm:main2}.
It is a version of Stinespring's dilation theorem  which essentially appears in
\cite{Popa_corr} as well as \cite{Kasparov}. We will use a formulation due to Ioana, Spaas and Wiersma:

\begin{prop}[Proposition 4.2. in Ioana, Spaas Wiersma \cite{ISW}] \label{prop:dilation}
Let $A$ be a unital C$^*$-algebra, $(\mathcal M,\tau)$ be a tracial von Neumann algebra and $\psi:A\rightarrow \mathcal M$ be a ucp map.

Then there exists a Hilbert space $\mathcal H$, a rank one projection $P\in B(\mathcal H)$ and a $*$-homomorphism $\pi:A\rightarrow \mathcal M\bar{\otimes} B(\mathcal H)$
such that $\psi(a) \otimes P=(1\otimes P)\pi(a)(1\otimes P)$, for every $a\in A$.
\end{prop}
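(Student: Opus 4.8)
The plan is to apply the classical Stinespring dilation theorem to the ucp map $\psi : A \to \mathcal M$, viewing $\mathcal M$ in its standard representation on $L^2(\mathcal M, \tau)$, and then to reorganize the resulting data into a homomorphism valued in the infinite amplification $\mathcal M \bar\otimes B(\mathcal H)$. First I would represent $\mathcal M \subset B(L^2(\mathcal M,\tau))$ via the standard representation, so that $\psi$ becomes a ucp map $A \to B(L^2(\mathcal M,\tau))$. Stinespring then yields a Hilbert space $\mathcal K$, a $*$-representation $\sigma : A \to B(\mathcal K)$, and an isometry $V : L^2(\mathcal M,\tau) \to \mathcal K$ such that $\psi(a) = V^* \sigma(a) V$ for all $a \in A$. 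The goal is to upgrade this into the stated form where the dilating representation lands inside $\mathcal M \bar\otimes B(\mathcal H)$ rather than in an unstructured $B(\mathcal K)$.

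The key structural input is that $\psi$ takes values in the von Neumann algebra $\mathcal M$, not merely in $B(L^2(\mathcal M,\tau))$. This means $\psi$ is an $\mathcal M'$-bimodule-type map: because the right $\mathcal M^{\mathrm{op}}$-action on $L^2(\mathcal M,\tau)$ commutes with everything in $\mathcal M$, one can arrange the Stinespring dilation to be compatible with the commutant. Concretely, I would carry the right $\mathcal M^{\mathrm{op}}$-action through the minimal Stinespring space $\mathcal K = \overline{\sigma(A)V L^2(\mathcal M,\tau)}$, defining a commuting $\mathcal M^{\mathrm{op}}$-action on $\mathcal K$ by $\sigma(a)V\widehat{x}\cdot z := \sigma(a)V\widehat{xz}$ and checking it is well-defined and isometric using that $\psi(a)\in\mathcal M$ commutes with the right action. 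This realizes $\mathcal K$ as a right $\mathcal M^{\mathrm{op}}$-module, hence (by the structure of Hilbert modules over a tracial von Neumann algebra, as in Section 5 of \cite{Popa}) as a subrepresentation of an amplification $L^2(\mathcal M,\tau)\otimes \mathcal H$ for a suitable auxiliary Hilbert space $\mathcal H$. Under this identification the commutant-respecting representation $\sigma$ becomes a $*$-homomorphism $\pi : A \to \mathcal M \bar\otimes B(\mathcal H)$, and the isometry $V$ identifies $L^2(\mathcal M,\tau)$ with $L^2(\mathcal M,\tau)\otimes \xi$ for a unit vector $\xi \in \mathcal H$, i.e. with the corner cut out by $1 \otimes P$ where $P$ is the rank-one projection onto $\mathbb C\xi$. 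Tracking the compression formula $\psi(a) = V^*\sigma(a)V$ through this identification yields $\psi(a)\otimes P = (1\otimes P)\pi(a)(1\otimes P)$ exactly as claimed.

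The main obstacle I anticipate is justifying the right-module identification rigorously: one must verify that the right $\mathcal M^{\mathrm{op}}$-action on the Stinespring space is not only well-defined and isometric but \emph{normal}, so that $\mathcal K$ genuinely decomposes as an amplification $L^2(\mathcal M,\tau)\otimes\mathcal H$ with $\sigma$ landing in $\mathcal M \bar\otimes B(\mathcal H)$ and not in some larger algebra. This is where the tracial hypothesis on $(\mathcal M,\tau)$ does real work, since it guarantees that every normal right $\mathcal M^{\mathrm{op}}$-module is a subrepresentation of a multiple of the standard module $L^2(\mathcal M,\tau)$. A careful proof must confirm that the Stinespring action inherits normality from the normality of $\psi$ on bounded sequences (or, since we only need the conclusion, invoke the minimal-dilation version together with the module-decomposition theorem directly). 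Since the result is quoted from \cite{ISW} (and traces back to \cite{Popa_corr} and \cite{Kasparov}), I would expect the cleanest route to cite the Hilbert-module dilation theorem in that form rather than re-deriving the decomposition from scratch.
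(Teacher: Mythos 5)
The paper does not actually prove this proposition: it is imported verbatim from \cite{ISW} (Proposition 4.2 there), with the remark that it goes back to \cite{Popa_corr} and \cite{Kasparov}, so there is no in-paper argument to compare yours against. That said, your plan is the standard proof of exactly this statement and is essentially correct: run Stinespring over the standard representation, install the commuting right $\mathcal M$-action on the minimal dilation space (well-definedness and boundedness follow from complete positivity, since the matrix $[\psi(a_j^*a_i)]\in M_n(\mathcal M)$ commutes with the diagonal right action on $L^2(\mathcal M,\tau)^{\oplus n}$), verify normality via the vector states $z\mapsto\tau(x^*\psi(a^*a)xz)$, and invoke the classification of normal right modules over a tracial von Neumann algebra to embed $\mathcal K$ into $L^2(\mathcal M,\tau)\otimes\mathcal H$, whose right-action commutant is $\mathcal M\bar{\otimes}B(\mathcal H)$. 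The one step you assert rather than prove is that $V$ becomes $\widehat{x}\mapsto\widehat{x}\otimes\xi$: a general right-module isometry $L^2(\mathcal M,\tau)\to L^2(\mathcal M,\tau)\otimes\mathcal H$ is implemented by a column $(m_i)$ with $\sum_i m_i^*m_i=1$ and need not have this form, so you should first split $\mathcal K=VL^2(\mathcal M,\tau)\oplus\mathcal K_0$ as right modules and embed the two summands separately, sending $VL^2(\mathcal M,\tau)$ onto $L^2(\mathcal M,\tau)\otimes\mathbb C\xi$; with that adjustment (and extending $\sigma$ by zero off $\mathcal K$ --- note $\pi$ is not required to be unital) the compression identity $\psi(a)\otimes P=(1\otimes P)\pi(a)(1\otimes P)$ follows as you describe.
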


That is, every ucp map to a tracial von Neumann algebra $\mathcal M$ factors as a $*$-homomorphism to an amplification 
$\mathcal M \bar{\otimes} B(\mathcal H)$ followed by the compression by a "rank one projection over $\mathcal M$".

\section{Proof of Theorem \ref{thm:main2}} \label{sec:main}

We begin this section by proving a general criterion ensuring that a property (T) group is not weakly ucp-stable.
The criterion is similar to the one in Theorem A of \cite{ISW}, which involves the existence of \emph{finite 
dimensional} projective representations whose 2-cocycles are almost trivial. We drop the finite dimensionality
assumption, and merely assume \emph{Connes embeddability} of the associated von Neumann algebras.

\begin{thm} \label{thm:ISW_gen}
Let $\Gamma$ be a countable property $(T)$ group. Assume there exists a sequence of $2$-cocycles 
$c_n \in Z^2(\Gamma, \mathbb{T})$ with the following properties:

\begin{enumerate}
    \item For each $n$, the cohomology class $[c_n] \in H^2(\Gamma, \mathbb{T})$ is nontrivial.\label{ass:main_thm_1}
    \item For all $g,h \in \Gamma$, $c_n(g,h) \xrightarrow[n \to \infty]{} 1$. \label{ass:main_thm_2}
    \item For all $n$, the twisted group von Neumann algebra $L_{c_n}(\Gamma)$ is Connes embeddable.\label{ass:main_thm_3}
\end{enumerate}
Then $\Gamma$ is not weakly ucp-stable.
\end{thm}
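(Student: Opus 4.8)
The plan is to argue by contradiction: assuming $\Gamma$ is weakly ucp-stable, I will manufacture, for all large $n$, an almost-invariant vector for a genuine projective representation carrying the cocycle $c_n$, and then invoke the projective form of Kazhdan's property in Theorem \ref{thm:NPS} to force $c_n$ to be a coboundary, contradicting assumption (\ref{ass:main_thm_1}).

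First I would use Connes embeddability, assumption (\ref{ass:main_thm_3}), to extract finite-dimensional approximations. For each $n$, a trace-preserving embedding $L_{c_n}(\Gamma)\hookrightarrow \prod_{m\to\mathcal U}M_{k_m}(\mathbb C)$ sends the canonical unitaries $u_g=\lambda_{c_n}(g)$ to elements whose representative sequences $(V^{(n)}_{g,m})_m$ satisfy $V^{(n)}_{g,m}V^{(n)}_{h,m}\approx c_n(g,h)V^{(n)}_{gh,m}$ and $\tau(V^{(n)}_{g,m})\to \tau(u_g)=0$ for $g\neq 1$ (using Lemma \ref{lem:unitary_stab} to arrange that the $V^{(n)}_{g,m}$ are genuinely unitary). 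A diagonal choice $m=m(n)$ along an exhaustion of $\Gamma$ produces maps $\varphi_n:\Gamma\to U(d_n)$ that are approximately projective with cocycle $c_n$ and satisfy $\tau_{d_n}(\varphi_n(g))\to 0$. Since $c_n(g,h)\to 1$ by assumption (\ref{ass:main_thm_2}), the maps $\varphi_n$ form an asymptotic homomorphism with $\|\varphi_n(g)-1\|_{2,\tau_{d_n}}\to\sqrt2$, i.e. a hyperlinear approximation of $\Gamma$. Weak ucp-stability then yields genuine representations $\pi_n:\Gamma\to U(\widehat{\mathcal H}_n)$ together with the orthogonal projections $P_n$ onto $\mathcal H_n=\mathbb C^{d_n}\subseteq\widehat{\mathcal H}_n$ such that $\|\varphi_n(g)-P_n\pi_n(g)P_n\|_{2,\tau_{d_n}}\to 0$ for every $g$.

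The core of the argument is an almost-invariant vector living on Hilbert--Schmidt operators. On the space of operators $\widehat{\mathcal H}_n\to\mathcal H_n$, equipped with the normalized inner product $\langle S,T\rangle=\tfrac1{d_n}\tr(T^*S)$, let $\Gamma$ act by $g\cdot T=\varphi_n(g)T\pi_n(g)^*$; since $\pi_n$ is genuine and $\varphi_n$ approximately projective with cocycle $c_n$, this is an approximate projective representation with cocycle $c_n$. Taking the unit vector $\xi_n:=P_n$ (the compression itself), and using $\varphi_n(g)P_n=\varphi_n(g)$, a direct computation gives
\[
\|g\cdot\xi_n-\xi_n\|^2=\|\varphi_n(g)\pi_n(g)^*-P_n\|^2 = 2-2\,\Ra\,\langle P_n\pi_n(g)P_n,\,\varphi_n(g)\rangle_{\tau_{d_n}},
\]
whose right-hand side tends to $0$ because $P_n\pi_n(g)P_n\to\varphi_n(g)$ in $\|\cdot\|_{2,\tau_{d_n}}$. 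Thus $\xi_n$ is almost invariant, with error tending to $0$ for each fixed $g$.

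Finally I would feed this into Theorem \ref{thm:NPS}. Fixing the finite set $F$ and the $\epsilon>0$ it provides, the estimate above shows $\inf_{\alpha\in\mathbb T}\|g\cdot\xi_n-\alpha\xi_n\|<\epsilon$ for all $g\in F$ once $n$ is large, so an almost-invariant vector for a projective representation with cocycle $c_n$ forces $c_n$ to be a coboundary, contradicting (\ref{ass:main_thm_1}). The main obstacle is precisely that Theorem \ref{thm:NPS} demands an honest $2$-cocycle, whereas $g\cdot T=\varphi_n(g)T\pi_n(g)^*$ is only approximately projective (its cocycle equals $c_n$ only up to the asymptotic error in $\varphi_n$); the single tracial ultraproduct over $n$ cannot be used to remedy this, since there $c_n\to 1$ would trivialize the cocycle and destroy the contradiction. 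I expect to resolve this by transferring the almost-invariant vector to the \emph{exact} projective representation $g\cdot T=\lambda_{c_n}(g)T\pi_n(g)^*$ on operators $\widehat{\mathcal H}_n\to\ell^2(\Gamma)$, whose cocycle is exactly $c_n$ because $\lambda_{c_n}$ is an exact projective representation. The transfer uses the approximate Hilbert--Schmidt intertwiner between $\varphi_n$ and $\lambda_{c_n}$ supplied by the trace-preserving Connes embedding of $L_{c_n}(\Gamma)$ (equivalently, realizing $\varphi_n$ on the GNS space of $(M_{d_n},\tau_{d_n})$ with its cyclic trace vector); obtaining estimates for this intertwiner tight enough to preserve the almost-invariance is the delicate point of the proof.
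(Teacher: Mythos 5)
Your overall strategy matches the paper's (contradiction, Connes embeddability producing approximate unitaries, stability producing a genuine representation, an almost-invariant vector in a space of Hilbert--Schmidt operators, then Theorem \ref{thm:NPS}), and you have correctly located the central difficulty: in the action $g\cdot T=\varphi_n(g)T\pi_n(g)^*$ the left factor is only approximately projective, so Theorem \ref{thm:NPS} does not apply. But your proposed repair is a genuine gap, not a delicate detail. The ``approximate Hilbert--Schmidt intertwiner between $\varphi_n$ and $\lambda_{c_n}$'' that you want does not come from a trace-preserving embedding $L_{c_n}(\Gamma)\hookrightarrow\prod_{m\to\mathcal U}M_m(\mathbb C)$: such an embedding supplies microstates, i.e.\ matrices whose \emph{moments} approximate those of the $u_g$, not isometries $V_n:\mathcal H_n\to\ell^2(\Gamma)$ with $V_n^*\lambda_{c_n}(g)V_n\approx\varphi_n(g)$ in normalized Hilbert--Schmidt norm. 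An almost-invariant unit vector for $T\mapsto\lambda_{c_n}(g)T\pi_n(g)^*$ of the form $V_nP_n$ would require precisely such a compression statement, which is a far stronger (and generally false) assertion than Connes embeddability. You also correctly note that ultraproducting over $n$ destroys the cocycle; but you do not consider the ultraproduct over the matrix size $m$ for \emph{fixed} $n$, which is where exactness is recovered.

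That is how the paper closes the gap, and the machinery is genuinely needed. First, Proposition \ref{prop:main} upgrades weak ucp-stability from finite-dimensional hyperlinear approximations to maps valued in $U(\mathcal M)$, $\mathcal M=\prod_{m\to\mathcal U}M_m(\mathbb C)$: applying stability for $\mathcal U$-almost every $m$ and reassembling yields ucp maps $\psi_n:C^*(\Gamma)\to\mathcal M$ with $\|\psi_n(v_s)-u_{s,n}\|_{2,\tau_{\mathcal M}}$ small, where $u_{g,n}\in U(\mathcal M)$ are the images of the canonical unitaries and satisfy the cocycle identity \emph{exactly}. Second, Stinespring dilation (Proposition \ref{prop:dilation}) replaces $\psi_n$ by an exact $*$-homomorphism $\rho$ into $\mathcal M\bar\otimes B(\mathcal H)$ compressed by a rank-one projection. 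The exact projective representation is then $\eta\mapsto(u_g\otimes P)\cdot\eta\cdot\rho(v_g)^*$ on $(1\otimes P)\cdot L^2(\mathcal M\bar\otimes B(\mathcal H),\hat\tau)$ --- both factors are exact, so the cocycle is exactly $c_n$ --- and $\widehat{1_{\mathcal M}\otimes P}$ is the $(F,\epsilon)$-almost-invariant unit vector, by essentially the computation you performed for $P_n$. So your almost-invariance computation survives, but the ambient space must be the $L^2$-bimodule of a semifinite amplification of the ultraproduct, not Hilbert--Schmidt operators into $\ell^2(\Gamma)$; without Proposition \ref{prop:main} and the dilation step your argument does not close.
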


Certain parts of the proof of Theorem \ref{thm:ISW_gen}, including the application of a dilation theorem 
(Proposition \ref{prop:dilation}) and Nicoara, Popa and Sasyk's characterisation of property (T) 
(Theorem \ref{thm:NPS}), appear verbatim in the proof of Theorem C in \cite{ISW}. However, we provide a full proof in this section for completeness. 

Throughout this section, we fix a nonprincipal ultrafilter $\mathcal U$ on $\mathbb N$
and denote by $\mathcal M = \prod_{m \to \mathcal U} M_m(\mathbb C)$ 
the ultraproduct of matrix algebras. Denote by $\tau_\mathcal M = \tau_\mathcal U$ the normalized trace on $\mathcal M$
(see Section \ref{prelim:von_neum}). 
Recall that for every $g \in \Gamma$, we denote by $v_g \in C^*(\Gamma)$
the corresponding unitary in the group C$^*$-algebra (see Section \ref{prelim:ucp}).
The following proposition strengthens weak ucp-stability from a rigidity property for hyperlinear approximations to a
rigidity property for "hyperlinear approximations" with values in the unitary group of $\mathcal M$.

\begin{prop} \label{prop:main}
Let $\Gamma$ be a finitely generated weakly ucp-stable group. Let $\varphi_n: \Gamma \to U(\mathcal M)$
be a sequence of maps satisfying:
\begin{enumerate}[label=(\roman*)]
    \item $\lim_n \Vert \varphi_n(g) \varphi_n(h) - \varphi_n(gh) \Vert_{2,\tau_\mathcal M} = 0$ for every $g,h\in
    \Gamma$. \label{eq:lem_1}
    \item $\liminf_n \Vert \varphi_n(g) - 1\Vert_{2,\tau_\mathcal M} \geq \sqrt{2}$ for every $1\neq g \in \Gamma$.
    \label{eq:lem_2}
\end{enumerate}
Then, for every $\epsilon > 0$ and finite subset $F \subset \Gamma$, for large enough $n \in \mathbb N$ there exist ucp maps $\psi_n: C^*(\Gamma) \to \mathcal M$ such that 
$\Vert \psi_n(v_s) - \varphi_n(s) \Vert_{2,\tau_\mathcal M} < \epsilon$ for all $s \in F$.
\end{prop}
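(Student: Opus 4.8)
The plan is to transfer weak ucp-stability, which a priori only constrains genuinely finite-dimensional hyperlinear approximations, to maps valued in $U(\mathcal{M})$ by unpacking the ultraproduct one coordinate at a time. Since $\Gamma$ is finitely generated and $F$ is finite, I first fix a finite generating set $S = \{g_1, \dots, g_k\}$ and a presentation $\pi \colon \mathbb{F}_S \twoheadrightarrow \Gamma$ with $F \subseteq S$ (adjoining the elements of $F$ to $S$ as new generators if necessary, which is harmless). Applying Lemma \ref{lem:ucp_stab_defs} to $\Gamma$ with tolerance $\epsilon/2$ produces a $\delta > 0$ together with finite sets $R_0 \subset \ker \pi$ and $W_0 \subset \mathbb{F}_S \setminus \ker \pi$.

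Next I check that, for $n$ large, $\varphi_n$ already satisfies the required approximate relations when measured in $\mathcal{M}$. The key observation is a routine induction on word length: using hypothesis \ref{eq:lem_1} (and noting that \ref{eq:lem_1} with $g=h=1_\Gamma$ forces $\varphi_n(1_\Gamma) \to 1$ since $\varphi_n(1_\Gamma)$ is unitary), together with the fact that left- and right-multiplication by unitaries is $\Vert \cdot \Vert_{2,\tau_\mathcal{M}}$-isometric, for each fixed word $w \in \mathbb{F}_S$ one has $\Vert w(\varphi_n(g_1),\dots,\varphi_n(g_k)) - \varphi_n(\pi(w)) \Vert_{2,\tau_\mathcal{M}} \to 0$. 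Taking $w = r \in R_0$, so that $\pi(r)=1_\Gamma$, yields $\Vert r(\varphi_n(g_1),\dots,\varphi_n(g_k)) - 1 \Vert_{2,\tau_\mathcal{M}} \to 0$; taking $w \in W_0$, so that $\pi(w) \neq 1_\Gamma$, and combining with hypothesis \ref{eq:lem_2} yields $\liminf_n \Vert w(\varphi_n(g_1),\dots,\varphi_n(g_k)) - 1 \Vert_{2,\tau_\mathcal{M}} \geq \sqrt{2}$. Hence there is $N$ such that for all $n \geq N$ we have $\Vert r(\varphi_n(g_1),\dots) - 1 \Vert_{2,\tau_\mathcal{M}} < \delta/2$ for every $r \in R_0$ and $\Vert w(\varphi_n(g_1),\dots) - 1 \Vert_{2,\tau_\mathcal{M}} > \sqrt{2} - \delta/2$ for every $w \in W_0$.

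Now fix $n \geq N$ and descend into coordinates. Choosing representative sequences of $\varphi_n(g_i) \in U(\mathcal{M})$ and applying Lemma \ref{lem:unitary_stab} coordinatewise, I may assume the representatives consist of honest unitaries $u^{(i)}_m \in U(m)$ (this changes nothing in $\mathcal{M}$, as $\varphi_n(g_i)$ is unitary). Set $f_m \colon S \to U(m)$, $f_m(g_i) = u^{(i)}_m$, and extend $f_m$ to $\mathbb{F}_S$ as in the convention of Section \ref{prelim:stab_gen_rel}. Since $\Vert \cdot \Vert_{2,\tau_\mathcal{M}}$ is the $\mathcal{U}$-ultralimit of $\Vert \cdot \Vert_{2,\tau_m}$ and word evaluation is computed coordinatewise on unitary representatives, the estimates of the previous paragraph show that
\[
G_n = \{\, m : \Vert f_m(r) - 1 \Vert_{2,\tau_m} < \delta \ \forall r \in R_0 \ \text{ and } \ \Vert f_m(w) - 1 \Vert_{2,\tau_m} > \sqrt{2} - \delta \ \forall w \in W_0 \,\}
\]
lies in $\mathcal{U}$. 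For each $m \in G_n$ the map $f_m$ satisfies the hypotheses of Lemma \ref{lem:ucp_stab_defs}, producing a homomorphism $\rho_m \colon \Gamma \to U(\widehat{\mathcal{H}}_m)$ and the compression $\psi_m \colon C^*(\Gamma) \to M_m(\mathbb{C})$, $\psi_m(v_g) = P_m \rho_m(g) P_m$, which is a ucp map by Lemma \ref{lem:compression_2_ucp} and satisfies $\Vert \psi_m(v_s) - f_m(s) \Vert_{2,\tau_m} < \epsilon/2$ for all $s \in S$; for $m \notin G_n$ I let $\psi_m$ be the trivial ucp map $v_g \mapsto 1$.

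Finally I assemble these into a single map. The product $x \mapsto (\psi_m(x))_m$ is a ucp map from $C^*(\Gamma)$ into $\prod_m M_m(\mathbb{C})$: ucp maps are contractive, so the sequence is operator-norm bounded and lands in the product, while positivity and unitality in the $\ell^\infty$-product are coordinatewise. Composing with the quotient $\prod_m M_m(\mathbb{C}) \to \mathcal{M}$, a unital $*$-homomorphism and hence ucp, gives the desired ucp map $\psi_n \colon C^*(\Gamma) \to \mathcal{M}$, $\psi_n(x) = \{\psi_m(x)\}_m$. Since $\varphi_n(s)$ is represented by $(f_m(s))_m$ and $G_n \in \mathcal{U}$,
\[
\Vert \psi_n(v_s) - \varphi_n(s) \Vert_{2,\tau_\mathcal{M}} = \lim_{m \to \mathcal{U}} \Vert \psi_m(v_s) - f_m(s) \Vert_{2,\tau_m} \leq \epsilon/2 < \epsilon
\]
for all $s \in S \supseteq F$, as required. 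The main conceptual obstacle is exactly this last assembly step: one must resist specialising to a single ``good'' coordinate $m$, because $\varphi_n(s)$ genuinely varies across coordinates and the image of a single matrix algebra under a trace-preserving embedding could never be $\tau_\mathcal{M}$-close to it. Working simultaneously over the whole $\mathcal{U}$-large set $G_n$ and bundling the resulting finite-dimensional ucp maps is what lets the approximation survive the ultralimit. The remaining ingredients — the word-evaluation estimate and the verification that the bundled map is ucp — are routine.
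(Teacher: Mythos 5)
Your proposal is correct and follows essentially the same route as the paper: fix a generating set containing $F$, invoke Lemma \ref{lem:ucp_stab_defs} together with Lemma \ref{lem:compression_2_ucp} to get $(\delta, R_0, W_0)$, verify the approximate relations in $\mathcal M$ for large $n$, pass to unitary coordinate representatives via Lemma \ref{lem:unitary_stab}, solve the problem on a $\mathcal U$-large set of coordinates, and bundle the resulting finite-dimensional ucp maps back into a ucp map to $\mathcal M$. The only differences are cosmetic: you spell out the word-evaluation estimate and the ucp-ness of the assembled map, which the paper treats as immediate.
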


\begin{proof}
Given $\epsilon>0$ and $F \subset \Gamma$, choose a finite generating set $S = \{g_1, \dots, g_k\} \subset \Gamma$ 
of $\Gamma$ with $F \subset S$. 
Furthermore, choose a presentation $\pi: \mathbb F_S \to \Gamma$, $\mathbb F_S$ being the free group on $k$-generators, as in Section \ref{prelim:stab_gen_rel}. 
Since $\Gamma$ is weakly ucp-stable, by a combination of Lemmas \ref{lem:ucp_stab_defs} and
\ref{lem:compression_2_ucp}, the following holds: 
There exists  $\delta > 0$ and finite subsets $R_0 \subset \ker \pi$, $W_0 \subset \mathbb F_S \setminus \ker \pi$ such that
for all $m \in \mathbb N$ and any map $f: S \to U(m)$ satisfying
\begin{align*}
&\Vert r(f(g_1), \dots, f(g_k)) - 1 \Vert_{2, \tau_{m}} < \delta \;\text{ for all $r \in R_0$,} \\
&\text{and } \Vert w(f(g_1), \dots, f(g_k)) - 1 \Vert_{2, \tau_{m}} > \sqrt{2} - \delta \;\text{ for all $w \in W_0$,}
\end{align*}
there exists a ucp map $\psi: C^*(\Gamma) \to M_m(\mathbb C)$ that satisfies 
$$ \Vert f(s) - \psi(v_s)  \Vert_{2, \tau_{m}} < \frac{\epsilon}{2}\text{ for all $s \in S$.}$$
For fixed $g \in \Gamma$ and $n \in \mathbb N$, we choose a representative sequence $\varphi_n(g) = \{ \varphi_{n,m}(g) \}_{m \to \mathcal U}$ where $\varphi_{n,m}(g) \in M_m(\mathbb C)$.
In fact, using Lemma \ref{lem:unitary_stab}, we can choose $\varphi_{n,m}(g) \in U(m)$ to be unitaries. 
As a consequence of assumptions \ref{eq:lem_1} and (\ref{eq:lem_2}, for large enough $n \in \mathbb N$ we have:
$$ \Vert r(\varphi_n(g_1), \dots, \varphi_n(g_k)) - 1_{\mathcal M} \Vert_{2, \tau_\mathcal M} < \frac{\delta}{2} 
\;\text{ for all $r \in R_0$,}$$
$$ 
\Vert w(\varphi_n(g_1), \dots, \varphi_n(g_k)) - 1_{\mathcal M} \Vert_{2, \tau_\mathcal M} 
>\sqrt{2} - \frac{\delta}{2} \;\text{ for all $w \in W_0$.}
$$
Fix such $n$, by the definition of $\tau_\mathcal M = \tau_\mathcal U$, for $\mathcal U$-almost every $m$ we have
\begin{align*}
    \Vert &r(\varphi_{n,m}(g_1), \dots, \varphi_{n,m}(g_k)) - 1_{M_m(\mathbb C)} \Vert_{2, \tau_m} < \delta,\\
    &\text{and } \Vert w(\varphi_{n,m}(g_1), \dots, \varphi_{n,m}(g_k)) - 1_{M_m(\mathbb C)} \Vert_{2, \tau_m} > \sqrt{2} - \delta,\\
\end{align*}
for all $r \in R_0$ and $w \in W_0$. As such, for $\mathcal U$-almost every $m$
we can choose ucp maps $\psi_{n,m}: C^*(\Gamma) \to M_m(\mathbb C)$ such that:
$$ \Vert \psi_{n,m}(v_s) - \varphi_{n,m}(s) \Vert_{2, \tau_m} < \frac{\epsilon}{2} \text{ for all $s \in S$}.$$
Define $\psi_n: C^*(\Gamma) \to \mathcal M$ by $\psi_n(a) = \{\psi_{n,m}(a)\}_m$
for every $a \in C^*(\Gamma)$. It is easily seen that as $\psi_{n,m}$ are ucp maps, so is $\psi_n$.
Further:
$$ \Vert \psi_n(v_{s}) - \varphi_n(s) \Vert_{2, \tau_\mathcal M} < \epsilon \text{ for all $s \in S$}.$$
Since $F \subset S$, we get the claim.
\end{proof}

Using Proposition \ref{prop:main}, we can now prove Theorem \ref{thm:ISW_gen}.

\begin{proof}[Proof of Theorem \ref{thm:ISW_gen}]
Assume by contradiction that $\Gamma$ is weakly ucp-stable. As $\Gamma$ has property $(T)$, we can choose $F, \epsilon$ as promised in Theorem \ref{thm:NPS}, and $\Gamma$ is finitely generated.
We may assume $c_n(1_\Gamma, 1_\Gamma) = 1$ for all $n$, otherwise, replace $c_n$ by $c_n(1_\Gamma, 1_\Gamma)^{-1}\cdot c_n$ and the conditions of the theorem still hold.

For $g \in \Gamma$ and $n \in \mathbb N$, denote by $u_{g,n} \in L_{c_n} (\Gamma)$ the corresponding canonical unitary that satisfies:
$$ u_{g,n} u_{h,n} = c_n(g,h) u_{gh, n} \text{ for all $h \in \Gamma$}. $$
As $c_n(1_\Gamma,1_\Gamma) = 1$, we have $u_{1_\Gamma, n} = 1$ for all $n \in \mathbb N$. 
By assumption (\ref{ass:main_thm_3}), for every $n \in \mathbb N$, $L_{c_n} (\Gamma)$ has a trace preserving normal $*$-homomorphism
into $\mathcal M = \prod_{m \to \mathcal U} M_m(\mathbb C)$.
To simplify notation, we fix such tracial embeddings and simply view $L_{c_n} (\Gamma) \subset \mathcal M$ as subalgebras.
Under this setup, by assumption (\ref{ass:main_thm_2}) we have:
\begin{equation*}
\lim_{n \to \infty} \Vert u_{g,n} u_{h,n} - u_{gh,n} \Vert_{2, \tau_\mathcal M} = 
\lim_{n\to \infty} \vert c_n(g,h) - 1 \vert = 0 \;\text{ for every $g,h \in \Gamma$},
\end{equation*}
as well as:
\begin{equation*}
\Vert u_{g,n} - 1 \Vert_{2, \tau_\mathcal M} =  (2 - 2\Ra{\tau_\mathcal{M} (u_{g,n})})^{1/2} = \sqrt{2} 
\;\text{ for every $n \in \mathbb N$ and $1 \neq g \in \Gamma$.}
\end{equation*}

As such, by Proposition \ref{prop:main}, for large enough $n \in \mathbb N$ there exists ucp maps
$\psi_n: C^*(\Gamma) \to \mathcal M$ such that $\Vert \psi_n(v_s) - u_{s,n} \Vert_{2,\tau_\mathcal M} < 
\epsilon^2/2$ for all $s \in F$. Fix such $n$, and let $c = c_n, u_g = u_{g,n}, \psi = \psi_n$, so $\psi: C^*(\Gamma) \to \mathcal M$ is a ucp map with
\begin{equation} \label{equation:closeness}
\Vert \psi(v_s) - u_s \Vert_{2,\tau_\mathcal M} < \frac{\epsilon^2}{2}\;\text{ for all $s \in F$.}
\end{equation}
By Proposition \ref{prop:dilation} there exists a Hilbert space $\mathcal H$,
a rank one projection $P \in \mathcal H$ and a $*$-homomorphism $\rho: C^*(\Gamma) \to \mathcal M \bar{\otimes} B(\mathcal H)$ such that
\begin{equation} \label{equation:compression}
    \psi(a) \otimes P = (1_\mathcal M \otimes P) \rho(a) (1_\mathcal M \otimes P) \;\text{ for all $a \in C^*(\Gamma)$.}
\end{equation}
Let $\mathcal N = \mathcal M \bar{\otimes} B(\mathcal H)$ and $\hat{\tau}_{\mathcal N} = \tau_\mathcal M \otimes \Tr$ be the semifinite trace on $\mathcal N$
discussed in Section \ref{prelim:von_neum}, and let $\widehat{\mathcal H} = L^2(\mathcal{N}, \hat{\tau}_{\mathcal{N}})$ the associated Hilbert space equipped with 
an $\mathcal N$-bimodule structure. 

Define $\mathcal K$ to be the subspace $\mathcal K = (1_\mathcal M \otimes P) \cdot \widehat{\mathcal H}$. Notice that $\mathcal K$ is invariant under 
the left multiplication action of $(1_\mathcal M \otimes P) \mathcal N (1_\mathcal M \otimes P) \simeq \mathcal M$, and under the right multiplication action
of $\mathcal N$. Thus, we can define a map $\pi: \Gamma \to U(\mathcal K)$ by
$$ \pi(g) \eta = (u_g \otimes P) \cdot \eta \cdot \rho(v_g)^* \; \text{for all $g \in \Gamma$, $\eta \in \mathcal K$.}$$
Using that $\rho$ is a $*$-homomorphism and $u_g u_h = c(g,h) u_{gh}$, one can verify that $\pi$ is a projective unitary representation with $\pi(g)\pi(h)=c(g,h)\pi(gh)$.
We claim that as a consequence of equations (\ref{equation:closeness}) and (\ref{equation:compression}),
the vector $\xi = \widehat{1_{\mathcal M} \otimes P} \in \mathcal K$ is a $(F, \epsilon)$-almost invariant unit vector of $\pi$. Indeed, 
$\Vert \xi \Vert_{2, \hat{\tau}_\mathcal N}^2 = \tau_\mathcal M(1_\mathcal M) \cdot \Tr(P) = 1$ and by equation (\ref{equation:compression}):
\begin{align*}
\langle \pi(g) \xi, \xi \rangle_{\hat{\tau}_\mathcal N} &= \langle (u_{g} \otimes P) \cdot \widehat{1_\mathcal{M} \otimes P} \cdot \rho(v_g)^* , \widehat {1_\mathcal M\otimes P} \rangle_{\hat{\tau}_\mathcal N} \\
&= \hat{\tau}_\mathcal N((u_{g} \otimes P) (1_\mathcal M \otimes P) \rho(v_g)^*  (1_\mathcal M \otimes P)) \\
&= \hat{\tau}_\mathcal N((u_g \otimes P) (\psi(v_g)^* \otimes P)) = \hat{\tau}_\mathcal N((u_g \psi(v_g)^*) \otimes P)\\
&= \tau_\mathcal M(u_g \psi(v_g)^*)
\end{align*}
for all $g \in \Gamma$. Thus, by the Cauchy-Schwarz inequality and equation (\ref{equation:closeness}) we have for all
$s \in F$:
\begin{align*}
\Vert \pi(s) \cdot \xi - \xi \Vert^2_{2, \hat{\tau}_\mathcal N} &= 2(1 - \Ra{\langle \pi(s) \cdot \xi, \xi \rangle_{\hat{\tau}_\mathcal N}}) \\
&= 2(1 - \Ra{ \tau_\mathcal M(u_{s} \psi(v_{s})^*) })\\ 
&= 2\Ra{ \tau_\mathcal M(u_{s}(u_{s}^* - \psi(v_{s})^*) })\\
&\underset{\text{C.S.}}{\leq} 2\Vert u_{s}^* - \psi(v_{s})^* \Vert_{2,\tau_\mathcal M} < \epsilon^2.
\end{align*}
By Theorem \ref{thm:NPS} we conclude that the cocycle $c$ is in fact a $2$-coboundary, contradicting assumption (\ref{ass:main_thm_1}). 
\end{proof}

Together with cohomological lemmas which will be proved in Section \ref{sec:coeff_change}, we can now deduce the main theorem.

\begin{proof}[Proof of Theorem \ref{thm:main2}]
Let us denote by $c \in Z^2(\Gamma, A)$ a cocycle induced by the given central extension.
Assume $G$ is hyperlinear, we wish to show $\Gamma$ is not weakly ucp-stable. 
By Lemma \ref{lem:Thom}, we have for each $\chi \in \widehat{A}$ that the twisted
group von Neumann algebra $L_{\chi \circ  c } (\Gamma)$ is Connes embeddable.
According to which extra assumption we imposed, that is, $\Gamma$ is perfect or that $G$ has property
(T) and $A = \mathbb Z$, we use one of Propositions \ref{prop:coeff_change_perfect}, \ref{prop:coeff_change_(T)}
to deduce that there exists some $\chi \in \widehat{A}$ such that
$[\chi \circ  c ] \neq 0 \in H^2(\Gamma, \mathbb{T})$. Note that $\{ \chi \vert \, [\chi \circ  c ] = 0 \in 
H^2(\Gamma, \mathbb{T})\}$ is a proper subgroup of $\widehat A$. 
Since $A$ is assumed to be torsion-free, $\widehat{A}$ will be connected  (see Theorem 30 in \cite{Morris}).
As a result, $\{ \chi \vert \, [\chi \circ  c ] = 0 \in  H^2(\Gamma, \mathbb{T})\}$ cannot contain an open 
neighborhood of the identity in $\widehat{A}$.
Thus, there exists a sequence $\chi_n \in \widehat A$ converging to the trivial character, but $[\chi_n \circ  c ] \neq 0$ for every $n$. 
Take $c_n = \chi_n \circ  c $ and we get a sequence satisfying the
conditions of Theorem \ref{thm:ISW_gen}. Consequently, $\Gamma$ is not weakly ucp-stable.
\end{proof}


%

\section{Results for lattices and random groups with property (T)}  \label{sec:lattices_and_rand_groups}

Recall in the setting of Corollary \ref{cor:Sp}, $G$ denotes a connected semisimple Lie group with infinite cyclic 
fundamental group and with property (T), $\Gamma \leq G$ is a lattice. Let $p: \widetilde{G} \to G$ denote the universal cover, and $\widetilde{\Gamma} = p^{-1}(\Gamma)$.

\begin{proof}[Proof of Corollary \ref{cor:Sp}]
Since $G$ is semisimple and has property (T), by Theorem 3.5.4 in \cite{BDV}, $\widetilde{G}$ also has property (T).
By Kazhdan's theorem, $\widetilde{\Gamma}$ has property (T), as it is a lattice in $\widetilde{G}$.
Since $\mathbb Z  = \ker (p) =\pi_1(G)$ is a central subgroup of $\widetilde{G}$, the central extension
\begin{center}
\begin{tikzcd}
1 \arrow[r] & \mathbb Z \arrow[r] & \widetilde{\Gamma} \arrow[r] & \Gamma \arrow[r] & 1
\end{tikzcd}
\end{center}

satisfies all the assumptions of Theorem \ref{thm:main2}, except that we need to check it is non-split. Indeed, if it
were, then $\widetilde{\Gamma} \simeq \Gamma \times \mathbb Z$, which cannot happen as property (T) for
$\widetilde{\Gamma}$ prevents it from having infinite abelian quotients.
\end{proof}

We now turn to the setting of Gromov random groups. For the relevant background regarding $K(\Gamma, 1)$ spaces, geometric dimension, as well as Euler characteristics, we refer to chapters VIII and IX in \cite{Brown}.

\begin{proof}[Proof of Proposition \ref{prop:rand}]

Let $\Gamma$ be a Gromov random group at density $1/3 < d < 1/2$ with $k$ generators and of length $3l$, presented 
by $\Gamma = \langle X \vert R \rangle$ as in Section \ref{sec:intro}.
By a theorem of Gromov (see Theorem 11 in \cite{Ollivier}), with overwhelming probability $\Gamma$ is a torsion-free hyperbolic group of geometric dimension 2,
meaning the presentation complex associated with $\Gamma = \langle X \vert R \rangle$ is 
a $K(\Gamma, 1)$ space. This immediately implies that $H_i(\Gamma, \mathbb Z) = 0$ for all $i > 2$. 
Further, the Euler characteristic of $\Gamma$ will be:
$$ \chi(\Gamma) = 1 - k + \vert R \vert \sim 1 - k + (2k-1)^{3ld}, $$
so for large $l$, $\chi(\Gamma)$ will certainly be greater than $2$.  

We claim that $\Gamma$ is perfect with overwhelming probability. Indeed, this follows immediately from the main
Theorem of Kozma and Lubotzky \cite{KL}, which says that with overwhelming probability $\Gamma$ cannot
be non-trivially represented in $\GL_n(\mathbb C)$ for fixed $n$:
any abelian quotient of $\Gamma$ is generated by $k$-elements, so it can be faithfully represented in
$\GL_k(\mathbb C)$. Thus, by the aforementioned result, this quotient should in fact be trivial 
(even $\mathbb Z/2\mathbb Z$ cannot appear, as we pick uneven lengths). As such, $H_1(\Gamma, \mathbb Z) = 0$ with overwhilming probability.
Note that by the Euler-Poincaré formula we have: 
$$2 \leq \chi(\Gamma) = 1 - \rank_\mathbb Z H_1(\Gamma, \mathbb Z) + \rank_\mathbb Z H_2(\Gamma, \mathbb Z) = 1 + \rank_\mathbb Z H_2(\Gamma, \mathbb Z),$$
where $\rank_\mathbb Z M$ denotes the rank of the free part of a finitely generated abelian group $M$. As such, the free part of $H_2(\Gamma, \mathbb Z)$ is not trivial.
Thus, together with the isomorphism $H^2(\Gamma, \mathbb Z) \simeq \Hom(H_2(\Gamma, \mathbb Z), \mathbb Z)$ given by the universal coefficient theorem, we have $H^2(\Gamma, \mathbb Z) \neq 0$.
Consequently, with overwhelming 
probability there exists a non-split central extension of the form:

\begin{center}
\begin{tikzcd}
1 \arrow[r] & \mathbb Z \arrow[r] & \widetilde{\Gamma} \arrow[r] & \Gamma \arrow[r] & 1.
\end{tikzcd}
\end{center}
By a seminal result due to \.{Z}uk \cite{Zuk} which was expanded upon in \cite{KK}, $\Gamma$ will have property (T) 
with overwhelming probability. Thus, the assumptions of Theorem \ref{thm:main2} are satisfied with overwhelming probability.
\end{proof}

Let us comment on a small technicality: we required the lengths of relations to be divisible by $3$ as a shadow of the triangle model for random groups,
in which property (T) is usually proved (see \cite{Ashcroft}, \cite{KK}, \cite{Zuk}). However, this can be remedied if we allow to randomly
pick relations of length \emph{close} to $l$ (see the discussion in I.2.c in \cite{Ollivier}, see also the remark on page 3 of \cite{Ashcroft}).
Proposition \ref{prop:rand} holds in this model as well.

\section{Infinitely presented property (T) groups} \label{sec:inf_presented}
The proof of Theorem \ref{thm:inf_presented} is based on another application of Proposition \ref{prop:main}
and proceeds along the lines of the proof of Theorem G in \cite{ISW}.
As in Section \ref{sec:main}, fix a nonprincipal ultrafilter $\mathcal U$ on $\mathbb N$
and denote by $\mathcal M = \prod_{m \to \mathcal U} M_m(\mathbb C)$ 
the ultraproduct of matrix algebras, $\tau_\mathcal M = \tau_\mathcal U$ the normalized trace on $\mathcal M$. 
For a group $G$, for every $g \in G$, $v_g \in C^*(G)$ denotes the corresponding unitary in the group C$^*$-algebra.

\begin{proof}[Proof or Theorem \ref{thm:inf_presented}]
Let $\Gamma$ be an infinitely presented property (T) group. Assume $\Gamma$ is weakly ucp-stable, and assume by
contradiction that all groups are hyperlinear. 
As $\Gamma$ has property (T), it is finitely generated.
Fix a presentation $\Gamma = \langle s_1, \dots, s_k \vert r_l, l\in \mathbb N\rangle$. By a theorem of Shalom, 
there exists some $m \in \mathbb N$ such that $\Gamma_0 = \langle s_1, \dots, s_k \vert r_1, \dots r_m \rangle$
has property (T) (see Theorem 6.7 in \cite{Shalom}, as well as Theorem 3.4.5 in \cite{BDV}).
As a consequence of property (T), there exists a finite subset $F \subset \Gamma_0$ and $\epsilon>0$ such that if
$\pi: \Gamma_0 \to U(\mathcal H)$ is a unitary representation of $\Gamma_0$ on a Hilbert space $\mathcal H$ and
$\xi  \in \mathcal H$ is a unit vector with $\Vert \pi(s)\xi - \xi \Vert \leq \epsilon$ for all $s \in S$, then 
$\Vert \pi(g) \xi - \xi \Vert < 1$ for all $g \in \Gamma_0$ (see Proposition 1.1.9 in \cite{BDV}).

Denote by $p: \Gamma_0 \to \Gamma$ the quotient map and fix a section $\sigma: \Gamma \to \Gamma_0$ of $p$ with
$\sigma(1) = 1$. 
For $n \geq 0$, let $\Gamma_n = \langle s_1, 
\dots, s_k \vert r_1, \dots r_{m+n} \rangle$ and let $p_n: \Gamma _0 \to \Gamma_n$ be the quotient map.
For $g \in \Gamma_n$, denote by $u_{g,n} \in L(\Gamma_n)$ the corresponding unitary in the group von Neumann algebra.
By assumption, each $\Gamma_n$
is hyperlinear, hence, by Radulescu's theorem \cite{Radulescu} $L(\Gamma_n)$ is Connes embeddable. 
As such, for each $n$ we can fix a trace preserving normal $*$-homomorphism $L(\Gamma_n) \hookrightarrow\mathcal M 
= \prod_{m \to \mathcal U} M_m(\mathbb C)$, and view $L(\Gamma_n) \subset \mathcal M$ as subalgebras.

For each $n \in \mathbb N$, define $\varphi_n: \Gamma \to U(\mathcal M)$ by $\varphi_n(g) = u_{p_n(\sigma(g)), n}$
for all $g \in \Gamma$. We claim $\varphi_n$ satisfies the assumptions of Proposition \ref{prop:main}. Indeed,
note that $\ker p = \bigcup_n \ker p_n$ as an increasing union. As such, for all $g,h \in \Gamma$, for large 
enough $n$ we have $\sigma(g) \sigma(h) \sigma(gh)^{-1} \in \ker p_n$, so $\varphi_n(g) \varphi_n(h) = \varphi_n(gh)$.
Furthermore, for every $1 \neq g \in \Gamma$ and $n \in \mathbb N$ we have $\sigma(g) \notin \ker p_n$, so
$\Vert \varphi_n(g) - 1\Vert_{2,\tau_\mathcal M} = (2 - 2\Ra \tau_{\mathcal M} (u_{p_n(\sigma(g)), n}))^{1/2} = \sqrt{2}$.
By Proposition \ref{prop:main}, for large enough $n$ there exist ucp maps $\psi_n: C^*(\Gamma) \to \mathcal M$ such that 
$\Vert \psi_n(v_{p(s)}) - \varphi_n(p(s)) \Vert_{2,\tau_\mathcal M} < \epsilon^2/2$ for all $s \in F$. 
For each $s \in F$, for large enough $n$ we have $\sigma(p(s))s^{-1} \in \ker p_n$, so 
$\varphi_n(p(s)) = u_{p_n(s),n}$. Thus, for large enough $n$ we have:
$$\Vert \psi_n(v_{p(s)}) - u_{p_n(s),n} \Vert_{2,\tau_\mathcal M} < \frac{\epsilon^2}{2} \text{ for all $s \in F$.}$$ 

Fix such $n$, let $u_g = u_{p_n(g),n}$ for all $g \in \Gamma_0$ and let $\psi = \psi_n \circ \bar{p}$, 
where $\bar{p}: C^*(\Gamma_0) \to C^*(\Gamma)$ is the $*$-homomorphism induced by $p$.
Thus, $\psi: C^*(\Gamma_0) \to \mathcal M$ is a ucp map with
\begin{equation*}
\Vert \psi(v_s) - u_s \Vert_{2,\tau_\mathcal M} < \frac{\epsilon^2}{2}\;\text{ for all $s \in F$.}
\end{equation*}
We can now repeat the dilation argument in the proof of Theorem \ref{thm:ISW_gen}, with $\Gamma_0$ in place of 
$\Gamma$, to find a Hilbert space
$\mathcal K$, a  unitary representation $\pi: \Gamma_0 \to U(\mathcal K)$ and a unit vector
$\xi \in \mathcal K$ such that 
$$
\langle \pi(g) \xi, \xi \rangle_{\mathcal K}  = \tau_\mathcal M(u_g \psi(v_g)^*) \text{ for all $g \in \Gamma_0$,}
$$
and 
$$ 
\Vert \pi(s) \xi - \xi \Vert^2_{\mathcal K} \leq 2\Vert u_{s}^* - \psi(v_{s})^* \Vert_{2,\tau_\mathcal M} < \epsilon^2 \text{ for all $s \in F$.}
$$
As a consequence, we must have $\Vert \pi(g) \xi - \xi \Vert_{\mathcal K} < 1$ for all $g \in \Gamma_0$.
Since $\psi$ is contractive, we have for all $g \in \Gamma_0$:
\begin{align*}
\Vert u_g - \psi(v_g) \Vert_{2,\tau_\mathcal M}^2 &= 1 + \Vert \psi(v_g) \Vert_{2,\tau_\mathcal M}^2 - 2\Ra{ \tau_\mathcal M(u_{g} \psi(v_{g})^*) }\\
&\leq 2(1 - \Ra{ \tau_\mathcal M(u_{g} \psi(v_{g})^*) })\\ 
&= 2(1 - \Ra{\langle \pi(g) \cdot \xi, \xi \rangle_{\mathcal K}}) =
\Vert \pi(g) \xi - \xi \Vert^2_{\mathcal K}< 1.
\end{align*} 
Now, for each $g \in \ker p$ we have $\psi(v_g) = \psi_n(v_{p(g)}) = 1$. On the other hand, if
$g \notin \ker p_n$, then $\Vert u_g - 1 \Vert_{2, \tau_\mathcal M} = \Vert u_{p_n(g), n} - 1 \Vert_{2, \mathcal M}
= \sqrt{2}$. Consequently, we have $\ker p = \ker p_n$ and $\Gamma$ is finitely presented. Contradiction.
\end{proof}

\section{Weak ucp-stability and residual finiteness} \label{sec:GR_analog}
We present the proof of Proposition \ref{prop:GR_analog}. For this, recall that a group $\Gamma$ has
\emph{Kirchberg's Factorization Property} if there exists a sequence $d_n \in \mathbb N$ and a sequence
of ucp maps $\psi_n : C^*(\Gamma) \to M_{d_n}(\mathbb C)$ such that:
\begin{enumerate}
    \item $\lim_{n \to \infty} \Vert \psi_n(a)\psi_n(b) - \psi_n(ab) \Vert_{2,\tau_{d_n}} =0$ for all $a,b\in C^*(\Gamma)$, \label{item:mult}
    \item $\lim_{n \to \infty} \tau_{d_n}(\psi_n(a)) = \tau(a)$ for every $a \in C^*(\Gamma)$, \label{item:trace}
\end{enumerate}
where $\tau: C^*(\Gamma) \to \mathbb C$ is the regular trace. 
A celebrated result of Kirchberg \cite{Kirchberg} states that once a group $\Gamma$ has property (T), it has the
factorization property if and only if it is residually finite. The proof of Proposition \ref{prop:GR_analog} follows:
\begin{proof}[Proof of Proposition \ref{prop:GR_analog}]
Let $\Gamma$ be a hyperlinear property (T) group. It is well known that we can choose a hyperlinear approximation
$\varphi_n : \Gamma \to U(d_n)$ with $\lim_{n} \tau_{d_n}(\varphi_n(g)) = 0$
for every $1\neq g \in \Gamma$ (see for example Proposition II.2.9 in \cite{CL}).
By weak ucp-stability and Lemma \ref{lem:compression_2_ucp}, there exists a sequence of ucp maps
$\psi_n: C^*(\Gamma) \to M_{d_n}(\mathbb C)$ with
\begin{equation} \label{eq:app1}
\lim_{n \to \infty} \Vert \varphi_n(g) - \psi_n(v_g) \Vert_{2, \tau_{d_n}} = 0 \text{ for every $g \in \Gamma$,}
\end{equation}
where $v_g \in C^*(\Gamma)$ denotes the unitary corresponding to $g \in \Gamma$. Since $\varphi_n$ is an asymptotic
homomorphism, we see that $\psi_n$ satisfies $\lim_{n} \Vert \psi_n(v_g)\psi_n(v_h) - \psi_n(v_{gh})
\Vert_{2,\tau_{d_n}} =0$ for all $g,h\in \Gamma$. Further, for every $g  \in \Gamma$:
$$\vert \tau_{d_n} (\varphi_n(g)) - \tau_{d_n}(\psi_n(v_g)) \vert \leq \Vert \varphi_n(g) - \psi_n(v_g) \Vert_{2,\tau_{d_n}} \xrightarrow[n \to \infty]{} 0,$$
as such, $\lim_n \tau_{d_n} (\psi_n(v_g)) = 0 = \tau(v_g)$ for all $1 \neq g \in \Gamma$. 
Since $\{v_g \vert g \in \Gamma \}$ spans a dense subspace of $C^*(\Gamma)$, it follows that $\psi_n$ exhibits
$\Gamma$ has Kirchberg's factorization property. By Kirchberg's result \cite{Kirchberg}, $\Gamma$ must be residually finite.
\end{proof}

\section{Passing to $\mathbb T$-valued cocycles} \label{sec:coeff_change}
In this section, we prove the propositions used in Section \ref{sec:main} for passing from non-split central
extension to projective representations with 2-cocycles which are not 2-coboundaries. More precisely:
let $c \in Z^2(\Gamma, K)$ be a 2-cocycle, where $K$ is some abelian group. Assuming $c$ is not a coboundary,
we would like to find characters $\chi \in  \widehat{K}$ such that $\chi \circ c \in Z^2(\Gamma, \mathbb T)$ is also not a coboundary. 
Of course, this is not always possible, as the example of $\Gamma = \mathbb Z / n$, $K = \mathbb Z$, $c$ corresponding to the following extension shows:
\begin{center}
\begin{tikzcd}
1 \arrow[r] & \mathbb Z \arrow[r, "\cdot n"] & \mathbb Z \arrow[r] & \mathbb Z / n \arrow[r] & 1.
\end{tikzcd}
\end{center}
We give two situations where this can be guaranteed. Both these statements are derived from
arguments contained in the proofs of Theorem 6.1 and Lemma 6.4 in \cite{ISW} (see also the correction of 
\cite{ISW}). However, there are some subtleties
regarding assumptions on $\Gamma$ and $A$, so we spell out the proofs.

\begin{prop} \label{prop:coeff_change_perfect}
Let $K$ be a countable abelian group and let $\Gamma$ be a countable perfect group. Given a 2-cocycle $c \in Z^2(\Gamma, K)$ which is not a 2-coboundary, there exists a character $\chi \in \widehat{K}$ such that $\chi \circ c \in Z^2(\Gamma, \mathbb{T})$ is not a 2-coboundary.
\end{prop}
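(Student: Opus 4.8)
The plan is to reduce the statement to a purely homological separation property via the universal coefficient theorem (UCT), using perfectness of $\Gamma$ to annihilate the relevant $\operatorname{Ext}$-term. First I would recall the UCT for group cohomology with trivial coefficients: for every abelian group $M$ there is a natural short exact sequence
$$ 0 \to \operatorname{Ext}^1_{\mathbb Z}(H_1(\Gamma, \mathbb Z), M) \to H^2(\Gamma, M) \to \Hom_{\mathbb Z}(H_2(\Gamma, \mathbb Z), M) \to 0. $$
Since $\Gamma$ is perfect, $H_1(\Gamma, \mathbb Z) = \Gamma^{\mathrm{ab}} = 0$, so the $\operatorname{Ext}$-term vanishes and the evaluation map gives a \emph{natural} isomorphism $H^2(\Gamma, M) \xrightarrow{\sim} \Hom_{\mathbb Z}(H_2(\Gamma, \mathbb Z), M)$. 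I would apply this both to $M = K$ and to $M = \mathbb T$.

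Next I would exploit naturality in the coefficient variable. A character $\chi \in \widehat{K} = \Hom(K, \mathbb T)$ induces the coefficient-change map $\chi_* : H^2(\Gamma, K) \to H^2(\Gamma, \mathbb T)$ sending $[c] \mapsto [\chi \circ c]$, and on the $\Hom$-side it induces postcomposition $f \mapsto \chi \circ f$. These fit into a square commuting with the two UCT isomorphisms as vertical arrows. Thus, writing $f \in \Hom_{\mathbb Z}(H_2(\Gamma, \mathbb Z), K)$ for the homomorphism corresponding to $[c]$ (which is nonzero precisely because $[c] \neq 0$), the problem becomes: find $\chi \in \widehat{K}$ with $\chi \circ f \neq 0$ in $\Hom_{\mathbb Z}(H_2(\Gamma, \mathbb Z), \mathbb T)$, since this is exactly the condition that $[\chi \circ c] \neq 0$.

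Finally comes the separation step. As $f \neq 0$, I pick $x \in H_2(\Gamma, \mathbb Z)$ with $0 \neq f(x) \in K$. Because characters into $\mathbb T$ separate the points of any abelian group — equivalently, $\mathbb Q / \mathbb Z$, and hence $\mathbb T$, is an injective cogenerator for $\mathbb Z$-modules — there exists $\chi \in \widehat{K}$ with $\chi(f(x)) \neq 1$. Then $\chi \circ f \neq 0$, and by the commuting square this yields $[\chi \circ c] \neq 0$, as required.

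The only point demanding care is the naturality of the UCT in the coefficient variable: the \emph{splitting} of the UCT sequence is not natural, but the inclusion and quotient maps are, and since the $\operatorname{Ext}$-term is zero here the middle arrow is an honest natural isomorphism, so no subtlety survives. The separation of points by $\mathbb T$-valued characters is standard (Pontryagin duality for discrete abelian groups, or the injective-cogenerator property of $\mathbb Q/\mathbb Z$), so I expect the main conceptual content to be the clean identification of $\chi_*$ with postcomposition under the UCT, after which the result is immediate.
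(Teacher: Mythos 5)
Your proof is correct, and it takes a genuinely different route from the paper. The paper argues by contradiction at the level of explicit cochains: assuming every $\chi \circ c$ is a coboundary, it uses a Borel selection of primitives $b_\chi$ for the coboundary map, shows via perfectness that $b_\chi(g) b_\eta(g) = b_{\chi\eta}(g)$ (because the defect $d_{\chi,\eta}$ is a homomorphism $\Gamma \to \mathbb T$ and hence trivial), then invokes automatic continuity of Borel homomorphisms between Polish groups and Pontryagin duality to write $b_\chi(g) = \chi(k_g)$, exhibiting $c$ itself as a $K$-valued coboundary. You instead run the universal coefficient theorem: perfectness kills $\operatorname{Ext}^1_{\mathbb Z}(H_1(\Gamma,\mathbb Z), -)$, so $H^2(\Gamma, M) \to \Hom_{\mathbb Z}(H_2(\Gamma,\mathbb Z), M)$ is a natural isomorphism for both $M = K$ and $M = \mathbb T$ (for the latter, divisibility of $\mathbb T$ already suffices), reducing the claim to the fact that $\mathbb T$-valued characters separate points of an abelian group. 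Your argument is shorter, purely homological, and notably does not use countability of $\Gamma$ or $K$ anywhere, whereas the paper's descriptive-set-theoretic ingredients (Polish topology on $\mathbb T^{\Gamma}$, automatic continuity, Pontryagin duality for discrete countable $K$) do rely on it; on the other hand, the paper's proof stays entirely within the explicit $2$-cocycle formalism used throughout Section 6 and avoids importing the UCT for group (co)homology. The only points requiring care in your version --- naturality of the UCT evaluation map in the coefficient variable (as opposed to the splitting), and the identification of inhomogeneous cocycle cohomology with the bar-resolution description to which the UCT applies --- are exactly the ones you flag, and both are standard.
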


\begin{proof}
Let us equip $\mathbb T^{\Gamma^n}, n \in \mathbb N$ with the topology of pointwise convergence, making them abelian polish groups.
Assume by contradiction the conclusion is false, that is, the image of the map $\widehat{K} \ni \chi \mapsto 
c_\chi = \chi \circ c \in Z^2(\Gamma, \mathbb{T})$ is contained in $B^2(\Gamma, \mathbb{T})$. Note that this
map is continuous, and using basic facts from descriptive set theory, we can choose a Borel section for 
the coboundary map $\partial^1: \mathbb T^\Gamma \to B^2(\Gamma, \mathbb T) \subset \mathbb T^{\Gamma \times \Gamma}$ (see Theorems 1.4, 2.5 and 2.6 in \cite{Kechris}).
Thus, we can find a Borel map $\widehat{K} \ni \chi \to b_\chi \in \mathbb{T}^\Gamma$ such that:
$$ c_\chi (g,h) = b_\chi(g) b_\chi(h) b_\chi (gh)^{-1} = \partial^1(b_\chi)(g,h) \text{ for every $\chi \in \widehat{K}, g,h\in\Gamma$. }$$
Define $d_{\chi, \eta} = b_\chi b_\eta b_{\chi \eta}^{-1}$ for $\chi,\eta \in \widehat K$. We claim $d_{\chi, \eta} : \Gamma \to \mathbb{T} $ is in fact a homomorphism for every $\chi, \eta \in \widehat{K}$. Indeed, using the
fact that for fixed $g,h \in \Gamma$, the map $\chi \in \widehat K \mapsto c_\chi (g,h) \in \mathbb{T}$ is a homomorphism:

\begin{align*}
    d_{\chi, \eta} (g) d_{\chi, \eta} (h) &= b_\chi(g) b_\eta(g) b_{\chi \eta}(g)^{-1} b_\chi(h) b_\eta(h) b_{\chi \eta}(h)^{-1} \\
    &= (b_\chi(g) b_\chi(h) b_{\chi}(gh)^{-1}) (b_\eta(g) b_\eta(h) b_{\eta}(gh)^{-1}) b_\chi(gh) b_\eta(gh) b_{\chi\eta}(g)^{-1} b_{\chi\eta}(h)^{-1} \\
    &= c_\chi(g,h) c_\eta(g,h) b_\chi(gh) b_\eta(gh) b_{\chi\eta}(g)^{-1} b_{\chi\eta}(h)^{-1} \\
    &= c_{\chi\eta}(g,h) b_\chi(gh) b_\eta(gh) b_{\chi\eta}(g)^{-1} b_{\chi\eta}(h)^{-1} \\
    &= b_{\chi\eta}(g) b_{\chi\eta}(h) b_{\chi \eta}(gh)^{-1} b_\chi(gh) b_\eta(gh) b_{\chi\eta}(g)^{-1} b_{\chi\eta}(h)^{-1} \\
    &= b_\chi(gh) b_\eta(gh) b_{\chi \eta}(gh)^{-1}    \\
    &= d_{\chi, \eta}(gh)
\end{align*}
(alternatively, applying the coboundary operator $\partial ^1$ to $d_{\chi, \eta}$ shows it is in $Z^1(\Gamma, \mathbb{T})$, which shows it is homomorphism). 
Since $\Gamma$ is perfect, $d_{\chi, \eta}$ is trivial. Thus, for all $g \in \Gamma, \;\chi,\eta \in \widehat K$ we have
$b_\chi(g) b_\eta(g) = b_{\chi \eta}(g)$. As such, for a fixed $g \in \Gamma$ the map $\widehat{K} \ni \chi 
\to b_\chi(g) \in \mathbb T$ is a Borel measurable homomorphism.
By automatic continuity (see Theorem 2.2 in \cite{Rosendal}), this map is in fact a continuous character of $\widehat{K}$.
By Pontryagin duality, for $g \in \Gamma$ there exists $k_g \in K$ such that $\chi(k_g) = b_\chi(g)$ for all $\chi \in \widehat K$. As a result:
$$ c_\chi(g,h) = b_\chi(g) b_\chi(h) b_\chi(gh)^{-1} = \chi(k_g k_h k_{gh}^{-1}) \text{ for all $g,h \in \Gamma, \; \chi \in \widehat K$,}$$
as such, $c (g,h) = k_g k_h k_{gh}^{-1}$ is in fact a coboundary. Contradiction.
\end{proof}

\begin{prop}[cf. Lemma 6.4 in \cite{ISW}] \label{prop:coeff_change_(T)}
Let $\Gamma$ be a countable group. Let $c \in Z^2(\Gamma, \mathbb Z)$ be a 2-cocycle coming from choosing
a section for a central extension of the form:
\begin{center}
\begin{tikzcd}
1 \arrow[r] & \mathbb Z \arrow[r] & \widetilde{\Gamma} \arrow[r, "p"] & \Gamma \arrow[r] & 1.
\end{tikzcd}
\end{center}
If $\widetilde{\Gamma}$ has property (T), then there exists a character $\chi \in \widehat{\mathbb Z}$ such that
$\chi \circ c \in Z^2(\Gamma, \mathbb{T})$ is not a 2-coboundary.
\end{prop}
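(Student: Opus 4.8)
The plan is to parametrise $\widehat{\mathbb Z}\cong\mathbb T$ by $\chi_z(n)=z^n$ with $z=e^{2\pi i t}$, and to factor each character through the inclusion $\mathbb Z\hookrightarrow\mathbb R$. Writing $\mu_t:\mathbb Z\to\mathbb R$, $n\mapsto tn$, and $\pi:\mathbb R\to\mathbb T=\mathbb R/\mathbb Z$ for the reduction, one has $\chi_z=\pi\circ\mu_t$, hence $\chi_z\circ c=\pi_*((\mu_t)_* c)$ and, on the level of cohomology classes, $[\chi_z\circ c]=\pi_*\!\big(t\cdot\iota_*[c]\big)$, where $\iota_*:H^2(\Gamma,\mathbb Z)\to H^2(\Gamma,\mathbb R)$ is induced by $\iota:\mathbb Z\hookrightarrow\mathbb R$ and $t\cdot$ denotes the $\mathbb R$-linear scaling of $H^2(\Gamma,\mathbb R)$. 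The long exact sequence attached to $0\to\mathbb Z\xrightarrow{\iota}\mathbb R\xrightarrow{\pi}\mathbb T\to 0$ identifies $\ker\pi_*$ with $\operatorname{im}\iota_*$, so the problem reduces to producing a single $t\in\mathbb R$ with $t\cdot\iota_*[c]\notin\operatorname{im}\iota_*$. I will obtain this from two facts: that $\iota_*[c]\neq 0$, and that the line $\mathbb R\cdot\iota_*[c]$ cannot be contained in $\operatorname{im}\iota_*$.

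The role of property (T) is to force $\iota_*[c]\neq 0$. Pushing the given extension out along $\iota$ yields a central extension $1\to\mathbb R\to E\to\Gamma\to1$ with class $\iota_*[c]$, concretely $E=(\widetilde\Gamma\times\mathbb R)/\{(z,-z):z\in\mathbb Z\}$; the map $j:\widetilde\Gamma\to E$, $\gamma\mapsto[(\gamma,0)]$, is an injective homomorphism sending the central $\mathbb Z\subset\widetilde\Gamma$ onto the copy of $\mathbb Z$ inside the central $\mathbb R$. If $\iota_*[c]=0$ then this extension splits, and since it is central the splitting provides a retraction $\rho:E\to\mathbb R$ that is the identity on the central $\mathbb R$. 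Then $\rho\circ j:\widetilde\Gamma\to\mathbb R$ is a homomorphism restricting to the inclusion $\mathbb Z\hookrightarrow\mathbb R$ on the centre, in particular nontrivial. This contradicts property (T) of $\widetilde\Gamma$, which forces $\widetilde\Gamma^{\mathrm{ab}}$ to be finite and hence $\Hom(\widetilde\Gamma,\mathbb R)=0$. Thus $\iota_*[c]\neq 0$.

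Finishing requires showing the line $L=\mathbb R\cdot\iota_*[c]$ is not contained in $\operatorname{im}\iota_*$, and this is the main technical obstacle, since for a general countable $\Gamma$ the group $H^2(\Gamma,\mathbb Z)$ — and hence $\operatorname{im}\iota_*$ — may be uncountable, so a naive cardinality count is unavailable. Instead I would exploit the structure of $H^2(\Gamma,\mathbb Z)$. Note first that $\Gamma$, being a quotient of $\widetilde\Gamma$, also has property (T); consequently $H_1(\Gamma,\mathbb Z)=\Gamma^{\mathrm{ab}}$ is finite, while $H_2(\Gamma,\mathbb Z)$ is countable because $\Gamma$ is. The universal coefficient sequence $0\to\operatorname{Ext}(H_1(\Gamma,\mathbb Z),\mathbb Z)\to H^2(\Gamma,\mathbb Z)\to\Hom(H_2(\Gamma,\mathbb Z),\mathbb Z)\to0$ then exhibits $H^2(\Gamma,\mathbb Z)$ as an extension of the reduced group $\Hom(H_2(\Gamma,\mathbb Z),\mathbb Z)$ by the finite group $\operatorname{Ext}(H_1(\Gamma,\mathbb Z),\mathbb Z)$; since a divisible subgroup would project to a divisible subgroup of $\Hom(H_2(\Gamma,\mathbb Z),\mathbb Z)=0$ and hence land in the finite $\operatorname{Ext}$ term, $H^2(\Gamma,\mathbb Z)$ admits no nonzero divisible subgroup, i.e. it is reduced. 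The same long exact sequence as above also shows $\ker\iota_*=\delta(\Hom(\Gamma,\mathbb T))$ is finite.

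Now suppose, for contradiction, that $L\subseteq\operatorname{im}\iota_*$. Then $H:=\iota_*^{-1}(L)\subseteq H^2(\Gamma,\mathbb Z)$ surjects onto $L\cong\mathbb R$ via $\iota_*$, with kernel the finite group $\ker\iota_*$. Since $\mathbb R$ is a $\mathbb Q$-vector space we have $\operatorname{Ext}(\mathbb R,\ker\iota_*)=0$, so this extension splits and $H^2(\Gamma,\mathbb Z)$ contains a subgroup isomorphic to $\mathbb R$ — a nonzero divisible subgroup, contradicting reducedness. Hence $L\not\subseteq\operatorname{im}\iota_*$, and any $t$ with $t\cdot\iota_*[c]\notin\operatorname{im}\iota_*$ gives a character $\chi=\chi_{e^{2\pi i t}}$ with $[\chi\circ c]=\pi_*(t\cdot\iota_*[c])\neq 0$, i.e. $\chi\circ c$ is not a 2-coboundary. (Alternatively, the last two paragraphs could be replaced by the Borel-selection and automatic-continuity argument used in Proposition \ref{prop:coeff_change_perfect}, with the hypothesis "$\Gamma$ perfect" replaced by the finiteness of $\Hom(\Gamma,\mathbb T)$ coming from property (T).)
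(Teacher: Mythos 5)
Your proof is correct, but its second half takes a genuinely different route from the paper's. Both arguments begin the same way: you show $\iota_*[c]\neq 0$ in $H^2(\Gamma,\mathbb R)$ by observing that a trivialisation over $\mathbb R$ would manufacture a nonzero homomorphism $\widetilde{\Gamma}\to\mathbb R$, which property (T) forbids; the paper does this with the explicit formula $s(g)=b(p(g))^{-1}r(g)$ rather than your pushout $E$, but it is the same idea. From there the paper simply cites Corollary 6.2 of \cite{ISW} to produce a character $\xi\in\widehat{\mathbb R}$ with $[\xi\circ c]\neq 0$ and then restricts $\xi$ to $\mathbb Z$, whereas you stay with $\widehat{\mathbb Z}$ and argue through the Bockstein sequence of $0\to\mathbb Z\to\mathbb R\to\mathbb T\to 0$: the problem becomes finding $t$ with $t\cdot\iota_*[c]\notin\operatorname{im}\iota_*$, and you rule out $\mathbb R\cdot\iota_*[c]\subseteq\operatorname{im}\iota_*$ by showing, via the universal coefficient sequence and the finiteness of $\Gamma^{\mathrm{ab}}$ (both consequences of property (T)), that $H^2(\Gamma,\mathbb Z)$ is reduced and $\ker\iota_*$ is finite, so $H^2(\Gamma,\mathbb Z)$ cannot contain a copy of the divisible group $\mathbb R$. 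Your version is self-contained where the paper outsources the key duality step to \cite{ISW}, and it makes visible that property (T) enters twice (to kill $\Hom(\widetilde{\Gamma},\mathbb R)$ and to make $H_1(\Gamma,\mathbb Z)$ finite); the paper's citation-based route is shorter and applies to cocycles valued in any $\mathbb Q$-vector space, not just along the line $\mathbb R\cdot\iota_*[c]$.

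Two minor remarks, neither of which affects the main argument. First, the phrase ``a divisible subgroup would project to a divisible subgroup of $\Hom(H_2(\Gamma,\mathbb Z),\mathbb Z)=0$'' should say that the \emph{image} is necessarily zero because $\Hom(-,\mathbb Z)$ has no nonzero divisible subgroups; the Hom group itself need not vanish, and the countability of $H_2(\Gamma,\mathbb Z)$ is never actually used. Second, your closing parenthetical is not obviously correct as stated: in Proposition \ref{prop:coeff_change_perfect} perfectness is used precisely to force the correction homomorphisms $d_{\chi,\eta}$ to vanish identically, and mere finiteness of $\Hom(\Gamma,\mathbb T)$ does not immediately achieve this, which is presumably why both the paper and \cite{ISW} pass through $\mathbb R$-valued cocycles first.
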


\begin{proof}
Let $\sigma: \Gamma \to \widetilde{\Gamma}$ be a section of the quotient map $p$ with $\sigma(g)\sigma(h)\sigma(gh)^{-1} = c(g,h)$ for all $g,h \in \Gamma$.

We claim that viewing $c$ as an $\mathbb R$ valued cocycle, it is not in $B^2(\Gamma, \mathbb R)$.
Assume by contradiction that there exists a map $b: \Gamma \to \mathbb R$ such that $c(g,h) = b(g)b(h)b(gh)^{-1}$
for all $g,h \in \Gamma$ (with multiplicative notation for $+$ in $\mathbb R$).
Define $r: \widetilde{\Gamma} \to \mathbb Z$ by $r(g) = \sigma(p(g))g^{-1}$ for $g \in \widetilde{\Gamma}$. 
As $\mathbb Z$ is central in $\widetilde{\Gamma}$, we have $r(g)r(h)=c(p(g),p(h))r(gh)$ for every $g,h\in\widetilde{\Gamma}$.
Hence, the map $s: \widetilde{\Gamma} \to \mathbb R$ given by $s(g)=b(p(g))^{-1}r(g)$ is a homomorphism.
Since $\widetilde{\Gamma}$ has property (T), so does $s(\widetilde{\Gamma})$, hence it is finite.
As $\mathbb R$ has no nontrivial finite subgroups, the image
of $s$ must be trivial. In particular, for all $g \in \mathbb Z \subset \widetilde{\Gamma}$ we have
$0 = s(g) = b(p(g))^{-1}\sigma(p(g))g^{-1} = b(1_\Gamma)^{-1} \sigma(1_\Gamma) g^{-1}$. 
Thus, $g = b(1_\Gamma)^{-1} \sigma(1_\Gamma)$ for all $g \in \mathbb Z$, which is a clear contradiction. 

As such, $c \in Z^2(\Gamma, \mathbb R)$ is not a 2-coboundary. By Corollary 6.2 in \cite{ISW}, there exists 
$\xi \in \widehat{\mathbb R}$ such that $\xi \circ c \in Z^2(\Gamma, \mathbb T)$ is not a $2$-coboundary.
Defining $\chi = \xi \vert _{\mathbb Z}$, we get a character of $\mathbb Z$ with 
$\chi \circ c \notin B^2(\Gamma, \mathbb T)$.

\end{proof}

\begin{bibdiv}
\begin{biblist}

\bib{AD}{article}{
   author={Akhtiamov, D.},
   author={Dogon, A.},
   title={On uniform Hilbert Schmidt stability of groups},
   journal={Proc. Amer. Math. Soc.},
   volume={150},
   date={2022},
   number={4},
   pages={1799--1809},
   issn={0002-9939},
   review={\MR{4375766}},
   doi={10.1090/proc/15772},
}

\bib{AP}{article}{
   author={Arzhantseva, G.},
   author={P\u{a}unescu, L.},
   title={Almost commuting permutations are near commuting permutations},
   journal={J. Funct. Anal.},
   volume={269},
   date={2015},
   number={3},
   pages={745--757},
   issn={0022-1236},
   review={\MR{3350728}},
   doi={10.1016/j.jfa.2015.02.013},
}

\bib{Ashcroft}{article}{
  title={Property (T) in density-type models of random groups},
  author={C. J. Ashcroft},
  journal={arXiv preprint arXiv:2104.14986},
  year={2021}
}

\bib{AK}{article}{
   author={Atkinson, S.},
   author={Kunnawalkam Elayavalli, S.},
   title={On ultraproduct embeddings and amenability for tracial von Neumann
   algebras},
   journal={Int. Math. Res. Not. IMRN},
   date={2021},
   number={4},
   pages={2882--2918},
   issn={1073-7928},
   review={\MR{4218341}},
   doi={10.1093/imrn/rnaa257},
}

\bib{BLSW}{article}{
    title={Stability and instability of lattices in semisimple groups},
    author={Bader, U.},
    author={Lubotzky, A.},
    author={Sauer, R.},
    author={Weinberger, S.},
    journal={arXiv preprint arXiv:2303.08943},
    year={2023}
}

\bib{Popa}{article}{
  title={An introduction to $II_1$ factors},
  author={C. Anantharaman},
  author={S. Popa},
  eprint={https://www.math.ucla.edu/~popa/Books/IIun.pdf},
  date={preprint 2010}
}

\bib{BL}{article}{
   author={O. Becker},
   author={A. Lubotzky},
   title={Group stability and Property (T)},
   journal={J. Funct. Anal.},
   volume={278},
   date={2020},
   number={1},
   pages={108298, 20},
   issn={0022-1236},
   review={\MR{4027744}},
   doi={10.1016/j.jfa.2019.108298},
}

\bib{BDV}{book}{
  title={Kazhdan's Property (T)},
  author={B. Bekka},
  author={P. de la Harpe},
  author={A. Valette},
  isbn={9780521887205},
  series={New Mathematical Monographs},
  year={2008},
  publisher={Cambridge University Press}
}

\bib{BB}{article}{
   author={L. Bowen},
   author={P. Burton},
   title={Flexible stability and nonsoficity},
   journal={Trans. Amer. Math. Soc.},
   volume={373},
   date={2020},
   number={6},
   pages={4469--4481},
   issn={0002-9947},
   review={\MR{4105530}},
   doi={10.1090/tran/8047},
}

\bib{Bowen}{article}{
  title={A brief introduction to sofic entropy theory},
  author={Bowen, L. },
  booktitle={Proceedings of the International Congress of Mathematicians (ICM 2018) (In 4 Volumes) Proceedings of the International Congress of Mathematicians 2018},
  pages={1847--1866},
  year={2018},
  organization={World Scientific}
}

\bib{Brown}{book}{
  title={Cohomology of Groups},
  author={K. S. Brown},
  isbn={9780387906881},
  series={Graduate Texts in Mathematics},
  year={1994},
  publisher={Springer New York}
}

\bib{Burton}{article}{
  title={Hyperlinear approximations to amenable groups come from sofic approximations},
  author={Burton, P.},
  journal={arXiv preprint arXiv:2110.03076},
  year={2021}
}

\bib{CL}{book}{
  title={Introduction to sofic and hyperlinear groups and Connes' embedding conjecture},
  author={V. Capraro},
  author={M. Lupini},
  author={V. Pestov},
  volume={1},
  year={2015},
  publisher={Springer}
}

\bib{CE}{article}{
   author={Choi, M. D.},
   author={Effros, E. G.},
   title={The completely positive lifting problem for $C\sp*$-algebras},
   journal={Ann. of Math. (2)},
   volume={104},
   date={1976},
   number={3},
   pages={585--609},
   issn={0003-486X},
   review={\MR{417795}},
   doi={10.2307/1970968},
}

\bib{Connes}{article}{
 author = {A. Connes},
 journal = {Annals of Mathematics},
 number = {1},
 pages = {73--115},
 publisher = {Annals of Mathematics},
 title = {Classification of Injective Factors},
 volume = {104},
 year = {1976}
}

\bib{Deligne}{article}{
   author={P. Deligne},
   title={Extensions centrales non r\'{e}siduellement finies de groupes
   arithm\'{e}tiques},
   language={French, with English summary},
   journal={C. R. Acad. Sci. Paris S\'{e}r. A-B},
   volume={287},
   date={1978},
   number={4},
   pages={A203--A208},
   issn={0151-0509},
   review={\MR{507760}},
}

\bib{DGLT}{article}{
  title={Stability, cohomology vanishing, and nonapproximable groups},
  author={M. De Chiffre},
  author={L. Glebsky},
  author={A. Lubotzky},
  author={A. Thom},
  booktitle={Forum of Mathematics, Sigma},
  volume={8},
  year={2020},
  organization={Cambridge University Press}
}

\bib{delaSalle}{article}{
  title={Spectral gap and stability for groups and non-local games},
  author={M. de la Salle},
  journal={arXiv preprint arXiv:2204.07084},
  year={2022}
}

\bib{EckShul}{article}{
  title={On amenable Hilbert-Schmidt stable groups},
  author={C. Eckhardt},
  author={T. Shulman},
  journal={arXiv preprint arXiv:2207.01089},
  year={2022}
}

\bib{ES1}{article}{
  title={Sofic groups and direct finiteness},
  author={G. Elek and E. Szab{\'o}},
  journal={Journal of Algebra},
  year={2003},
  volume={280},
  pages={426-434}
}

\bib{ES2}{article}{
  title={Hyperlinearity, essentially free actions and $L_2$-invariants. The sofic property},
  author={G. Elek and E. Szab{\'o}},
  journal={Mathematische Annalen},
  year={2004},
  volume={332},
  pages={421-441}
}

\bib{GS}{article}{
  title={Virtually free groups are $ p $-Schatten stable},
  author={M. Gerasimova},
  author={K. Shchepin},
  journal={arXiv preprint arXiv:2107.10032},
  year={2021}
}

\bib{GR}{article}{
  title={Almost solutions of equations in permutations},
  author={L. Glebsky},
  author={L. M. Rivera},
  journal={Taiwanese Journal of Mathematics},
  volume={13},
  number={2A},
  pages={493--500},
  year={2009},
  publisher={The Mathematical Society of the Republic of China}
}

\bib{Gromov_hyp}{article}{
   author={Gromov, M.},
   title={Hyperbolic groups},
   conference={
      title={Essays in group theory},
   },
   book={
      series={Math. Sci. Res. Inst. Publ.},
      volume={8},
      publisher={Springer, New York},
   },
   date={1987},
   pages={75--263},
   review={\MR{919829}},
}

\bib{Gromov}{article}{
  author = {M. Gromov},
  year = {1999},
  pages = {109-197},
  title = {Endomorphisms of symbolic algebraic varieties},
  volume = {1},
  journal = {Journal of the European Mathematical Society},
  doi = {10.1007/PL00011162}
}

\bib{HS_C*}{article}{
   author={Hadwin, D.},
   author={Shulman, T.},
   title={Tracial stability for $C^*$-algebras},
   journal={Integral Equations Operator Theory},
   volume={90},
   date={2018},
   number={1},
   pages={Paper No. 1, 35},
   issn={0378-620X},
   review={\MR{3767651}},
   doi={10.1007/s00020-018-2430-1},
}
	
\bib{HS}{article}{
   author={D. Hadwin},
   author={T. Shulman},
   title={Stability of group relations under small Hilbert-Schmidt
   perturbations},
   journal={J. Funct. Anal.},
   volume={275},
   date={2018},
   number={4},
   pages={761--792},
   issn={0022-1236},
   review={\MR{3807776}},
   doi={10.1016/j.jfa.2018.05.006},
}

\bib{ISW}{article}{
  title={Cohomological obstructions to lifting properties for full $C^*$-algebras of property (T) groups},
  author={A. Ioana},
  author={P. Spaas},
  author={M. Wiersma},
  journal={Geometric and Functional Analysis},
  volume={30},
  number={5},
  pages={1402--1438},
  year={2020},
  publisher={Springer}
}

\bib{IS}{article}{
   author={A. Ioana},
   author={P. Spaas},
   title={${\rm II}_1$ factors with exotic central sequence algebras},
   journal={J. Inst. Math. Jussieu},
   volume={20},
   date={2021},
   number={5},
   pages={1671--1696},
   issn={1474-7480},
   review={\MR{4311565}},
   doi={10.1017/S1474748019000653},
}

\bib{Ioana1}{article}{
  title={Stability for product groups and property ($\tau$)},
  author={A. Ioana},
  journal={Journal of Functional Analysis},
  volume={279},
  number={9},
  pages={108729},
  year={2020},
  publisher={Elsevier}
}

\bib{Ioana2}{article}{
  title={Almost commuting matrices and stability for product groups},
  author={A. Ioana},
  journal={arXiv preprint arXiv:2108.09589},
  year={2021}
}

\bib{Jaikin}{article}{
   author={Jaikin-Zapirain, A.},
   title={The base change in the Atiyah and the L\"{u}ck approximation
   conjectures},
   journal={Geom. Funct. Anal.},
   volume={29},
   date={2019},
   number={2},
   pages={464--538},
   issn={1016-443X},
   review={\MR{3945838}},
   doi={10.1007/s00039-019-00487-3},
}

\bib{MIP*=RE}{article}{
  title={MIP*=RE},
  author={Ji, Z.},
  author={Natarajan, A.},
  author={Vidick, T.},
  author={Wright, J.},
  author={Yuen, H.},
  journal={Communications of the ACM},
  volume={64},
  number={11},
  pages={131--138},
  year={2021},
  publisher={ACM New York, NY, USA}
}

\bib{Kasparov}{article}{
   author={Kasparov, G. G.},
   title={Hilbert $C^{\ast} $-modules: theorems of Stinespring and
   Voiculescu},
   journal={J. Operator Theory},
   volume={4},
   date={1980},
   number={1},
   pages={133--150},
   issn={0379-4024},
   review={\MR{587371}},
}

\bib{Kechris}{article}{
   author={Kechris, A. S.},
   title={Topology and descriptive set theory},
   journal={Topology Appl.},
   volume={58},
   date={1994},
   number={3},
   pages={195--222},
   issn={0166-8641},
   review={\MR{1288299}},
   doi={10.1016/0166-8641(94)00146-4},
}

\bib{Kirchberg}{article}{
   author={Kirchberg, E.},
   title={Discrete groups with Kazhdan's property ${\rm T}$ and
   factorization property are residually finite},
   journal={Math. Ann.},
   volume={299},
   date={1994},
   number={3},
   pages={551--563},
   issn={0025-5831},
   review={\MR{1282231}},
   doi={10.1007/BF01459798},
}

\bib{KT}{article}{
   author={A. Klyachko},
   author={A. Thom},
   title={New topological methods to solve equations over groups},
   journal={Algebr. Geom. Topol.},
   volume={17},
   date={2017},
   number={1},
   pages={331--353},
   issn={1472-2747},
   review={\MR{3604379}},
   doi={10.2140/agt.2017.17.331},
}

\bib{KK}{article}{
   author={Kotowski, M.},
   author={Kotowski, M.},
   title={Random groups and property $(T)$: \.{Z}uk's theorem revisited},
   journal={J. Lond. Math. Soc. (2)},
   volume={88},
   date={2013},
   number={2},
   pages={396--416},
   issn={0024-6107},
   review={\MR{3106728}},
   doi={10.1112/jlms/jdt024},
}

\bib{KL}{article}{
   author={G. Kozma},
   author={A. Lubotzky},
   title={Linear representations of random groups},
   journal={Bull. Math. Sci.},
   volume={9},
   date={2019},
   number={3},
   pages={1950016, 12},
   issn={1664-3607},
   review={\MR{4045386}},
   doi={10.1142/S1664360719500164},
}
	
\bib{LV}{article}{
  title={Characters of solvable groups, Hilbert-Schmidt stability and dense periodic measures},
  author={A. Levit},
  author={I. Vigdorovich},
  journal={arXiv preprint arXiv:2206.02268},
  year={2022}
}

\bib{NPS}{article}{
  title={On $II_1$ factors arising from 2-cocycles of w-rigid groups},
  author={R. Nicoara},
  author={S. Popa},
  author={R. Sasyk},
  journal={Journal of Functional Analysis},
  volume={242},
  number={1},
  pages={230--246},
  year={2007},
  publisher={Elsevier}
}

\bib{Morris}{book}{
place={Cambridge},
series={London Mathematical Society Lecture Note Series},
title={Pontryagin Duality and the Structure of Locally Compact Abelian Groups},
DOI={10.1017/CBO9780511600722},
publisher={Cambridge University Press},
author={S. A. Morris},
year={1977},
}

\bib{Olesen}{article}{
  title={The Connes embedding problem, Sofic groups and the QWEP Conjecture},
  author={K. K. Olesen},
  year={Master thesis},
  eprint={http://web.math.ku.dk/~musat/thesis_final_KKO_March12.pdf}
}

\bib{Ollivier}{book}{
   author={Ollivier, Yann},
   title={A January 2005 invitation to random groups},
   series={Ensaios Matem\'{a}ticos [Mathematical Surveys]},
   volume={10},
   publisher={Sociedade Brasileira de Matem\'{a}tica, Rio de Janeiro},
   date={2005},
   pages={ii+100},
   isbn={85-85818-30-1},
   review={\MR{2205306}},
}

\bib{Ozawa}{article}{
  title={About the Connes embedding conjecture},
  author={N. Ozawa},
  journal={Japanese Journal of Mathematics},
  volume={8},
  number={1},
  pages={147--183},
  year={2013},
  publisher={Springer}
}

\bib{Paulsen}{book}{
   author={Paulsen, V.},
   title={Completely bounded maps and operator algebras},
   series={Cambridge Studies in Advanced Mathematics},
   volume={78},
   publisher={Cambridge University Press, Cambridge},
   date={2002},
   pages={xii+300},
   isbn={0-521-81669-6},
   review={\MR{1976867}},
}

\bib{Pestov}{article}{
  title={Hyperlinear and Sofic Groups: A Brief Guide},
  author={V. Pestov},
  journal={Bulletin of Symbolic Logic},
  year={2008},
  volume={14},
  pages={449 - 480}
}

\bib{PX}{article}{
 author = {Pisier, G.},
 author={Xu, Q.},
 title = {Non-commutative {{\(L^p\)}}-spaces},
 BookTitle = {Handbook of the geometry of Banach spaces. Volume 2},
 ISBN = {0-444-51305-1},
 pages = {1459--1517},
 year = {2003},
 publisher = {Amsterdam: North-Holland},
}

\bib{Popa_corr}{article}{
   author={Popa, S.},
   title={Correspondences},
   note={Preprint, INCREST 56 (1986), 1986.},
   eprint={https://www.math.ucla.edu/~popa/popa-correspondences.pdf}
}

\bib{Radulescu}{article}{
  title={The von Neumann algebra of the non-residually finite Baumslag group $\langle a,b\vert a b^3 a^{-1} = b^2 \rangle$ embeds into $\mathcal{R}^\omega$},
  author={Radulescu, F.},
  journal={arXiv preprint math/0004172},
  year={2000}
}

\bib{Rosendal}{article}{
   author={Rosendal, C.},
   title={Automatic continuity of group homomorphisms},
   journal={Bull. Symbolic Logic},
   volume={15},
   date={2009},
   number={2},
   pages={184--214},
   issn={1079-8986},
   review={\MR{2535429}},
   doi={10.2178/bsl/1243948486},
}

\bib{Shalom}{article}{
   author={Shalom, Y.},
   title={Rigidity of commensurators and irreducible lattices},
   journal={Invent. Math.},
   volume={141},
   date={2000},
   number={1},
   pages={1--54},
   issn={0020-9910},
   review={\MR{1767270}},
   doi={10.1007/s002220000064},
}

\bib{Takesaki}{book}{
   author={Takesaki, M.},
   title={Theory of operator algebras. I},
   series={Encyclopaedia of Mathematical Sciences},
   volume={124},
   note={Reprint of the first (1979) edition;
   Operator Algebras and Non-commutative Geometry, 5},
   publisher={Springer-Verlag, Berlin},
   date={2002},
   pages={xx+415},
   isbn={3-540-42248-X},
   review={\MR{1873025}},
}

\bib{Thom1}{article}{
  title={Examples of hyperlinear groups without factorization property},
  author={A. Thom},
  journal={Groups, Geometry, and Dynamics},
  volume={4},
  number={1},
  pages={195--208},
  year={2009}
}

\bib{Thom2}{article}{
   author={A. Thom},
   title={Finitary approximations of groups and their applications},
   conference={
      title={Proceedings of the International Congress of
      Mathematicians---Rio de Janeiro 2018. Vol. III. Invited lectures},
   },
   book={
      publisher={World Sci. Publ., Hackensack, NJ},
   },
   date={2018},
   pages={1779--1799},
   review={\MR{3966829}},
}

\bib{Weiss}{article}{
  author = {B. Weiss},
  year = {2000},
  pages = {350-359},
  title = {Sofic groups and dynamical systems},
  volume = {62},
  journal = {Sankhyā Ser. A},
  doi = {10.2307/25051326}
}

\bib{Zuk}{article}{
   author={\.{Z}uk, A.},
   title={Property (T) and Kazhdan constants for discrete groups},
   journal={Geom. Funct. Anal.},
   volume={13},
   date={2003},
   number={3},
   pages={643--670},
   issn={1016-443X},
   review={\MR{1995802}},
   doi={10.1007/s00039-003-0425-8},
}

\end{biblist}
\end{bibdiv}
\end{document}